\newtheorem{proposition}{Proposition}[section]
\newtheorem{theorem}{Theorem}[section]
\newtheorem{lemma}[proposition]{Lemma}
\theoremstyle{definition}
\newtheorem{definition}{Definition}[section]
\newtheorem{assumption}{Assumption}[section]
\newtheorem*{thank}{Acknowledgments}
\theoremstyle{remark}
\newtheorem{remark}{Remark}[section]
\numberwithin{equation}{section}
\title{Stability of standing waves for all frequencies to nonlinear Schr\"odinger
equations with potentials in one dimension} 
\author{Noriyoshi Fukaya \and 
Masahiro Ikeda \and 
Hiroaki Kikuchi}
\dedicatory{Dedicated to Professor Yoshio Tsutsumi on the occasion of his 70th birthday.}
\date{\today}
\newcommand{\R}{\mathbb{R}}
\newcommand{\e}{\varepsilon}
\newcommand{\p}{\partial}
\renewcommand{\Re}{\operatorname{Re}}
\renewcommand{\Im}{\operatorname{Im}}
\subjclass[2020]{35Q55, 35B35}
\keywords{Nonlinear Schr\"odinger equation, 
standing wave,
stability}
\begin{document}
\maketitle

\begin{abstract}
In this paper, we study the orbital stability of standing waves for one-dimensional nonlinear Schr\"odinger equations with potentials.
We show that the standing waves are orbitally
stable for all frequencies in the $L^{2}$-
subcritical and critical cases.
Since the presence of potentials breaks the 
scale invariance of the equations, it is a 
delicate problem to apply the abstract theory of Grillakis, Shatah, and Strauss (1987)
directly without a perturbative argument.
For this reason, little is known about the 
orbital stability of standing waves for \textit{all} 
frequencies in the non-scale-invariant setting.
We overcome this difficulty by employing the 
approach of Noris, Tavares, and 
Verzini (2014).
\end{abstract}

\section{Introduction}
In this paper, we consider the following 
one-dimensional nonlinear Schr\"{o}dinger equations with a potential:
\begin{equation}\label{NLS1}
    i \psi_{t}
    =-\psi_{xx} 
    + V(x) \psi -
    |\psi|^{p-1}\psi,
    \quad (t, x)\in \R\times \R, 
\end{equation}
where $1 < p < \infty$, $V$ is a real-valued function, and $\psi=\psi(t, x)$ is the complex-valued unknown function.

Equation~\eqref{NLS1} is known as a model proposed to describe
the local dynamics at a nucleation site. 
Here, the potential $V$ simulates 
a local depression in the ion density 
(see Rose and Weinstein~\cite{MR939275}).
In addition, \eqref{NLS1} also appears 
in the Bose-Einstein condensate. 
The following 
three-dimensional nonlinear Schr\"{o}nger equation 
\begin{equation}\label{BEC}
    i \psi_t
    =-\Delta \psi 
    + |x|^{2} \psi 
    - |\psi|^{2}\psi\quad \mbox{in $\R \times \R^{3}$}
    \end{equation}
describes the Bose-Einstein condensate with attractive 
inter-particle interactions under a magnetic trap. 
Equation~\eqref{NLS1} is derived from \eqref{BEC}
and seems appropriate when its longitudinal dimension is much longer than its transverse ones
(see \cite{Carr_2000} for the derivation of \eqref{NLS1}).

Now, we state our assumption on the 
potential $V$: 

\begin{enumerate}
\item[(V1)]  $V\in L_{\text{loc}}^1(\R; \R)\cap C^{1}(\R \setminus \{0\})$ is even, non-decreasing function of $r = |x|$, and  $\p_{x} V \not\equiv 0$.
\end{enumerate}
Put 
\begin{equation} \label{eq1-2}
	\omega_{1} 
    := - \inf\Bigl\{
	\int_{\R} (|u_{x}|^{2} + V(x) |u|^{2}) dx
	\colon u \in H^{1}(\R)
    \text{ with }
    V|u|^{2}\in L^1(\R), \:
    \|u\|_{L^{2}} = 1
	\Bigr\}. 
\end{equation}
Under the assumption (V1), we see that 
$\omega_{1}$ is well-defined, that is, $\omega_{1} < \infty$ 
(see Lemma~\ref{lema-1} (i) below).    

\begin{definition}
Let $\omega > \omega_{1}$. 
We define a real Hilbert space $X$ by 
\[	X := \{ 
	u \in H^{1}(\R) \colon
    V(x) |u|^{2} \in L^1(\R)
	\} \]		
equipped with the inner product
\[	(u, v)_{X_{\omega}} := 
    \Re \int_{\R} 
    \bigl(u_x(x)\overline{v_x(x)} + V(x) u(x) 
    \overline{v(x)} + \omega u(x) \overline{v(x)}\bigr) dx.	\]   
The norm of $X$ is denoted by $\|\cdot\|_{X_{\omega}}$. In particular, we use $\|\cdot\|_X:=\|\cdot\|_{X_{\omega_1+1}}$. 
In addition, we denote by $X^{*}$ the dual of 
the function space $X$. 
\end{definition}
We also find from the assumption 
(V1) that 
\begin{equation} \label{eq1-1}
    X \hookrightarrow H^{1}(\R). 
\end{equation}
See Lemma \ref{lema-1} (ii) below.

Throughout this paper, we always assume the following: 
\begin{assumption} 
\label{l-wellposed}
For any $\psi_{0} \in X$, there exist 
$T_{\max} = T_{\max}(\|\psi_{0}\|_X) > 0$ 
and a solution $\psi \in C([0, T_{\max}), 
X)$ to \eqref{NLS} with $\psi|_{t = 0} 
= \psi_{0}$ satisfying 
\[  E(\psi(t)) = E(\psi_{0}), 
    \quad 
    \|\psi(t)\|_{L^{2}} = \|\psi_{0}\|_{L^{2}} 
    \quad \mbox{for all $t \in [0, 
    T_{\max})$},  
    \]
where $E$ is the energy defined by 
\[  E(u) = \frac{1}{2} 
    \int_{\R} 
    (|u_{x}(x)|^{2} 
    + V(x) |u(x)|^{2}) dx
    - \frac{1}{p + 1} \|u\|_{L^{p + 1}}^{p + 1}.    \]
\end{assumption}

\begin{remark}
The assumption \ref{l-wellposed} holds if there exist 
real valued functions 
$V_{1}$ and $V_{2}$ such that 
$V(x) = V_{1}(x) + V_{2}(x)$, 
where $V_{1}$ and $V_{2}$ satisfy
 the following: 
 \begin{enumerate}
 \item[\textrm{(i)}] 
 $V_{1} \in C^{\infty} (\R)$, 
 $V_{1}(x) \geq 0$ 
 for $x \in \R$, 
$\p_{x}^{m} 
V_{1} \in L^{\infty}(\R)$ 
for $m \geq 2$
\item[\textrm{(ii)}]
$V_{2} \in L^{1}(\R) + L^{\infty} 
(\R)$. 
 \end{enumerate}
(see \cite[Theorem 3.3.1,  
3.3.5, 3.3.9, 
Proposition 4.2.3 and 
Theorem 9.2.6]
{MR2002047} and \cite{MR1016082}). 
\end{remark}

Here, we are interested in a standing wave 
solution to \eqref{NLS}. 
By a \textit{standing wave}, we mean a solution to 
\eqref{NLS1} of the form 
    \[
    \psi(t, x) = e^{i \omega t} \phi_{\omega}(x)
    \qquad (\omega \in \R). 
    \]
Then we see that the function $\phi_{\omega}$ 
satisfies the following elliptic equations:
\begin{equation}
    \label{sp1}
    - \phi_{xx} 
    + V(x) \phi + \omega \phi
    - |\phi|^{p - 1} \phi = 0, 
    \quad x\in\R. 
\end{equation}
We define the action functional 
$S_{\omega} \in C^{1}(X, \R)$ by
\[  S_{\omega}(u) 
    := E(u) + \frac{\omega}{2} 
    \|u\|_{L^{2}}^{2}.  \]
Then we see that $u \in X$ is a weak solution to \eqref{sp1} if and only if 
$S_{\omega}^{\prime}(u) = 0$ in $X^{*}$. 
We call by \textit{ground state} a solution to 
\eqref{sp1} which minimizes the action functional $S_{\omega}$ among all non-trivial solutions to \eqref{sp1} in $X$. 
We denote by $\mathcal{A}_{\omega}$ the set of all non-trivial solutions to \eqref{sp1} in $X$ and by $\mathcal{G}_{\omega}$ that of all ground states to \eqref{sp1}. Namely, 
\[  \mathcal{A}_{\omega} 
    := \{
    u \in X \setminus \{0\} 
    \colon S_{\omega}'(u) 
    = 0 \mbox{ in }X^{*}\}
    \]
and  
\begin{equation} \label{eq1-3}
    \mathcal{G}_{\omega} 
    := \{
    u \in \mathcal{A}_{\omega} 
    \colon S_{\omega}(u) 
    \leq S_{\omega}(v) 
    \mbox{ for all }
    v \in \mathcal{A}_{\omega}\}. 
\end{equation}
The following properties of ground states are well-known. For the readers' convenience, we give these proofs later.
\begin{theorem} \label{thm:1.1}
Let $\omega > \omega_{1}$ and 
$1 < p < \infty$. Assume that (V1).
Then there exists $(\phi_\omega)_{\omega>\omega_1}$ such that the following is true.
\begin{enumerate}
\renewcommand{\labelenumi}{(\roman{enumi})}
\item $\phi_{\omega}\in \mathcal{A}_\omega$ is positive, even, and decreasing in $r = |x|$.
\item $\mathcal{G}_\omega=\{e^{i\theta} \phi_{\omega}\colon \theta\in\R\}$. 
\item $S_\omega''(\phi_{\omega})$ has a unique negative eigenvalue $\nu_\omega$, which is simple. 
\item $\ker S_\omega''(\phi_{\omega})=\operatorname{span}\{i\phi_\omega\}$. 
\item $\sigma(S_\omega''(\phi_{\omega}))\setminus\{\nu_\omega, 0\}\subset[\delta, \infty)$ for some $\delta>0$, 
where $\sigma(S_\omega''(\phi_{\omega}))$ 
is the set of spectrum of the operator 
$S_{\omega}''(\phi_{\omega})$. 
\end{enumerate}
\end{theorem}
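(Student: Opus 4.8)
The plan is to establish the five assertions by combining variational arguments for the existence and symmetry of $\phi_\omega$ with a careful spectral analysis of the Schr\"odinger-type operators obtained by linearizing $S_\omega$ at $\phi_\omega$. For (i), I would first verify that the functional $S_\omega$ satisfies the hypotheses of the mountain-pass theorem on $X$: the embedding $X\hookrightarrow H^1(\R)$ from \eqref{eq1-1}, together with the one-dimensional Sobolev inequality $\|u\|_{L^{p+1}}\lesssim\|u\|_{H^1}$, gives the geometry $S_\omega(u)\ge c\|u\|_{X_\omega}^2$ for small $\|u\|_{X_\omega}$ and $S_\omega(tu_0)\to-\infty$, while the definition of $\omega_1$ in \eqref{eq1-2} and $\omega>\omega_1$ ensure coercivity of the quadratic part. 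Since $X\hookrightarrow H^1(\R)\hookrightarrow L^{p+1}(\R)$ is not compact on the whole line, I would use the one-dimensional concentration-compactness or a rearrangement argument: replacing a minimizing (Palais-Smale) sequence by its Schwarz symmetrization, which does not increase $\int(|u_x|^2+V|u|^2)$ because $V$ is even and non-decreasing in $r=|x|$ by (V1), and which preserves the other terms, to obtain a radial minimizer; then $\Hrad$ embeds compactly into $L^{p+1}(\R)$, so the minimizing sequence converges. Positivity follows from replacing $\phi_\omega$ by $|\phi_\omega|$ and the strong maximum principle applied to the elliptic equation \eqref{sp1}; strict monotonicity in $r$ comes from the moving-plane method or an ODE argument using that $V$ is non-decreasing.

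For (ii), the ground state set is by definition \eqref{eq1-3} the set of minimizers of $S_\omega$ over $\mathcal A_\omega$, and the gauge invariance $u\mapsto e^{i\theta}u$ of $S_\omega$ immediately gives $\{e^{i\theta}\phi_\omega\}\subset\Go$. For the reverse inclusion I would show any ground state $v$ satisfies $|v|$ is also a minimizer, hence (by the equality case in the diamagnetic/rearrangement inequality, or by uniqueness of the positive solution to \eqref{sp1}) $v=e^{i\theta}|v|$ with $|v|$ a positive ground state, and then invoke uniqueness of the positive radial solution of \eqref{sp1} — which in one dimension follows from a shooting/ODE phase-plane analysis using that $V$ is monotone in $r$, or from the classical Coffman-type/Kwong-type uniqueness results adapted to the potential. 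This uniqueness of the positive ground state is the point I expect to require the most care, since the presence of $V$ removes scaling and many standard uniqueness proofs; I would lean on the one-dimensional structure, writing \eqref{sp1} as a planar ODE system and using the monotonicity of $V$ to control the number of intersections of two solutions.

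For (iii)-(v) I would pass to the real linearization. Writing $u=\phi_\omega+h_1+ih_2$ with $\phi_\omega$ real, the Hessian $S_\omega''(\phi_\omega)$ splits as $L_+\oplus L_-$ on $L^2(\R;\R)\oplus L^2(\R;\R)$, where
\[ L_+ = -\p_x^2 + V + \omega - p\phi_\omega^{p-1},\qquad L_- = -\p_x^2 + V + \omega - \phi_\omega^{p-1}. \]
Both are self-adjoint Schr\"odinger operators on $L^2(\R)$ with form domain $X$; since $V+\omega-\phi_\omega^{p-1}\to\omega-\omega_1\ge 0$ at infinity (more precisely the potential part is bounded below and the perturbation $-p\phi_\omega^{p-1}$ is a relatively compact negative perturbation because $\phi_\omega\in H^1\hookrightarrow C_0$), the essential spectrum of both operators is $[\omega-\omega_1,\infty)\subset[\delta,\infty)$ for some $\delta>0$, and below it there are only finitely many simple eigenvalues (standard one-dimensional Sturm-Liouville theory). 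For $L_-$: $L_-\phi_\omega=0$ by \eqref{sp1}, and $\phi_\omega>0$, so $0$ is the lowest eigenvalue of $L_-$ and is simple, giving $\ker L_-=\operatorname{span}\{\phi_\omega\}$ and $L_-\ge 0$; translated back, this yields $\ker S_\omega''(\phi_\omega)=\operatorname{span}\{i\phi_\omega\}$ once we also know $L_+$ has trivial kernel, which is (iv). For $L_+$: differentiating \eqref{sp1} in $x$ gives $L_+(\p_x\phi_\omega)=0$, but $\p_x\phi_\omega<0$ for $x>0$ and vanishes at $0$, so $\p_x\phi_\omega$ has exactly one sign change (one node), hence by Sturm oscillation theory it is the second eigenfunction; therefore $L_+$ has exactly one negative eigenvalue $\nu_\omega$, which is simple, and $0$ is not an eigenvalue of $L_+$ — giving (iii) and (iv), and (v) follows since the remainder of the spectrum of $L_+$ lies in $(0,\infty)$ bounded away from $0$ (discreteness of eigenvalues below the essential spectrum plus $0\notin\sigma(L_+)$) and the spectrum of $L_-$ lies in $[0,\infty)$ with $0$ isolated. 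The main obstacle in this part is justifying the node-counting for $\p_x\phi_\omega$ when $V$ is merely $C^1$ away from $0$ and possibly singular at $0$; I would handle the origin by noting $\phi_\omega\in H^1\subset C(\R)$ and $\p_x\phi_\omega$ is continuous with $\p_x\phi_\omega(0)=0$ by evenness, so the nodal structure on $\R\setminus\{0\}$ together with simplicity of eigenvalues (from the limit-point case of the Sturm-Liouville operator on $\R$) suffices.
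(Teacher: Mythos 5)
The decisive gap is in your treatment of (iii)--(v): you assert that ``differentiating \eqref{sp1} in $x$ gives $L_+(\p_x\phi_\omega)=0$.'' This is false once $V$ is non-constant: differentiation of \eqref{sp1} yields
\begin{equation*}
L_{\omega,+}(\p_x\phi_\omega)=-V'(x)\,\phi_\omega\not\equiv 0,
\end{equation*}
because the potential destroys translation invariance (and (V1) requires $\p_xV\not\equiv0$). Hence $\p_x\phi_\omega$ is not a zero mode of $L_{\omega,+}$, the Sturm node-counting that identifies it as the second eigenfunction collapses, and with it your entire derivation of ``exactly one simple negative eigenvalue and trivial kernel.'' This is exactly the obstruction the paper is organized around: non-degeneracy (iv) is proved not through $\phi'$ but by perturbing the equation with a small bump $\delta\gamma$ supported away from the origin, invoking uniqueness of the positive solution of the perturbed problem, and showing that a nontrivial radial kernel element would force the Morse index of the perturbed action at $\phi_\omega$ to be at least two, contradicting the Morse-index-one computation on the Nehari manifold (Appendix B); the odd part of the kernel is handled separately by an ODE argument on $(0,\infty)$. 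Likewise (iii) comes from the variational characterization $\nu_\omega=\inf\{\langle L_{\omega,+}u,u\rangle:\|u\|_{L^2}=1\}<0$ together with simplicity via a Wronskian argument and the Morse index bound, not from the nodal structure of $\phi'$. (Your observation that $-\p_x\phi_\omega>0$ on $(0,\infty)$ can be salvaged to exclude odd kernel elements, which is essentially what the paper does for that sector, but it gives nothing in the even sector, where the real difficulty sits.)

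A secondary weakness is (ii): you defer uniqueness of the positive solution to ``shooting/phase-plane analysis'' or ``Kwong-type results adapted to the potential.'' Classical Kwong--Coffman uniqueness leans on scaling, which is unavailable here; the paper instead runs a Pohozaev-type monotonicity argument (the function $J(r;\phi)$ satisfies $\frac{d}{dr}J=-\tfrac12V'(r)\phi^2\le0$ and $J\to0$ at infinity, hence $J\ge0$), following the Shioji--Watanabe scheme, and condition (V1) enters precisely through the sign of $V'$. Your sketch correctly identifies the monotonicity of $V$ as the essential input but does not supply the mechanism. Part (i) and the reduction of (ii) to positive uniqueness are broadly in line with the paper (Nehari minimization plus Schwarz rearrangement rather than mountain pass, a cosmetic difference), and (v) via Weyl's theorem is fine.
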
    
Next, we study the orbital stability 
of standing waves $e^{i \omega t} \phi_{\omega}$. 
The orbital stability/instability of standing 
waves is defined by 
the following: 
\begin{definition}
We say that the standing wave $e^{i \omega t} \phi_{\omega}$ 
is orbitally stable if for any 
$\e > 0$, there exists $\delta > 0$ such that if 
\[  \|\psi_{0} -\phi_{\omega}\|_{X} < \delta,  \]
then the corresponding solution 
$\psi$ to \eqref{NLS1} with $\psi|_{t = 0} 
= \psi_{0}$ exists globally and satisfies 
\[  \sup_{t > 0} \inf_{\theta \in \R} \|\psi(t) - e^{i \theta} \phi_{\omega}\|_{X}  
    < \e.   \]
Otherwise, we say that the standing wave
$e^{i \omega t} \phi_{\omega}$ is orbitally unstable 
\end{definition}
The orbital stability and instability of the 
standing waves to nonlinear 
Schr\"{o}dinger equations with potentials 
has been studied so far 
(see \cite{MR3638314, MR646873, MR677997, 
MR4020633, MR1973275, 
MR1948875, MR2379460, MR1989539, 
MR3842879, MR3792709, MR939275, MR2239285, 
MR691044} and references therein). 
Let us recall some of them. 
To this end, we consider the following 
nonlinear Schr\"{o}dinger equations in general dimensions: 
\begin{equation}\label{NLS}
    i \psi_t 
    =-\Delta \psi 
    + V(x) \psi 
    - |\psi|^{p-1}\psi \quad 
    \text{in }\R \times \R^{d}, 
\end{equation}
where 
$d \ge 1$, $1 < p < 2^{*} - 1$, and 
$V$ is a real valued function. 
Here 
\[  2^{*} =
    \left\{\begin{alignedat}{2}
   & \infty & \quad 
   &\text{when }d = 1, 2,
\\ &\frac{2d}{d - 2} & \quad 
   &\text{when }d \ge 3.
    \end{alignedat}\right.  \]
When we consider the standing waves 
$\psi(t, x) = e^{i \omega t} \phi_{\omega}(x)$, 
then $\phi_{\omega}$ satisfies 
\begin{equation}\label{sp}
    - \Delta \phi 
    + V(x) \phi 
    + \omega \phi 
    - |\phi |^{p - 1} \phi 
    = 0 \quad \text{in }\R^{d}. 
\end{equation}

We first consider the case where $V \equiv 0$. 
In this case, 
the equation \eqref{NLS} is invariant under the 
scaling 
\begin{equation} \label{eq-scale}
    \psi(t, x) 
    \mapsto \psi_{\lambda}(t, x) 
    = \lambda^{\frac{2}{p-1}} \psi(\lambda^{2} t, \lambda x). 
\end{equation}
Note that 
\begin{equation} \label{eq-scale2}
    \|\psi_{\lambda}(0, \cdot)\|_{L^{2}} 
    = \lambda^{\frac{2}{p - 1} - \frac{d}{2}}
    \|\psi(0, \cdot)\|_{L^{2}}.  
\end{equation}
Thus, we see that the scaling \eqref{eq-scale} preserves 
the $L^{2}$-norm $\|\cdot\|_{L^{2}}$ 
when $p = 1 + 4/d$.  
For this reason, the exponent 
$p = 1 + 4/d$
is referred to as ``$L^{2}$-critical''. 
Let $\phi_{\omega}$ be the ground state 
of \eqref{sp}.  
Cazenave and Lions~\cite{MR677997} proved that standing waves $e^{i \omega t} \phi_{\omega}$
are stable 
when $L^{2}$-subcritical case 
$1 < p < 1 + 4/d$.   
On the other hand, 
Berestycki and Cazenave~\cite{MR646873} showed that standing waves 
$e^{i \omega t} \phi_{\omega}$
are unstable 
when $L^{2}$-supercritical case
$1 + 4/d < p < 2^{*} 
- 1$. 
Concerning with the $L^{2}$-critical case 
$p = 1 + 4/d$, 
Weinstein~\cite{MR691044} proved that the standing waves 
$e^{i \omega t} \phi_{\omega}$ 
are unstable for any $\omega > 0$, 
where $\phi_{\omega} \in H^{1}(\R^{d})$ 
is any non-trivial solution to \eqref{sp} (not 
necessary the ground state). 
When we consider the ground state 
$\phi_{\omega} $ to \eqref{sp}, we can also 
study the stability of standing waves 
$e^{i \omega t} \phi_{\omega}$
by applying the 
abstract theory of Grillakis, Shatah 
and Strauss~\cite{MR901236}, 
in which they showed that the standing wave $e^{i \omega t} \phi_{\omega}$ is
stable if 
$\frac{d}{d \omega} \|\phi_{\omega}\|_{L^{2}}^{2} > 0$ 
and unstable if $\frac{d}{d \omega} \|\phi_{\omega}\|_{L^{2}}^{2}
< 0$ 
under several assumptions.
See Comech and Pelinovsky~\cite{MR1995870}, Ohta~\cite{MR2785894}, and Maeda~\cite{MR2923422} for the case of $\frac{d}{d \omega} \|\phi_{\omega}\|_{L^{2}}^{2} = 0$. 
When $V \equiv 0$, it follows from 
\eqref{eq-scale2} that
\begin{equation} \label{eq-sca}
    \|\phi_{\omega}\|_{L^{2}}^{2} 
    = \omega^{\frac{2}{p - 1} 
    - \frac{d}{2}} 
    \|\phi_{1}\|_{L^{2}}^{2}. 
\end{equation}
From this, we can check the sign of 
$\frac{d}{d \omega} \|\phi_{\omega}\|_{L^{2}}^{2}$
and find that 
\[  \frac{d}{d \omega} \|\phi_{\omega}\|_{L^{2}}^{2}
    \left\{\begin{alignedat}{2}
   &> 0 & \quad 
   &\text{if }1 < p < 1 + \frac{4}{d}, 
\\ &= 0 & \quad 
   &\text{if }p = 1 + \frac{4}{d}, 
\\ &< 0 & \quad 
   &\text{if }1 + \frac{4}{d} < p < 2^{*} - 1. 
    \end{alignedat}\right.  \]
It is well-known that the 
ground state $\phi_{\omega}$ 
satisfies the assumptions of 
the abstract theory of 
Grillakis, Shatah and Strauss
~\cite{MR901236}. 
Thus, we can find that 
the standing wave $e^{i \omega t} 
\phi_{\omega}$ is stable for 
$1 < p < 1 + 4/d$ and 
unstable for $1 + 4/d < p < 2^{*} - 1$. 

Now we pay our attention to the case 
of $V \not\equiv 0$. 
In this case, the equations \eqref{NLS} 
are not scale invariant, so that \eqref{eq-sca} does 
not hold. 
For this reason, we cannot check the sign 
of $\frac{d}{d\omega}\|\phi_{\omega}\|_{L^{2}}^{2}$ at least directly. 
Rose and Weinstein~\cite{MR939275} 
employed the bifurcation theory and 
computed the sign of 
$\frac{d}{d\omega}\|\phi_{\omega}\|_{L^{2}}^{2}$ 
for $\omega \in \R$ which is close to $\omega_{1}$. 
After that, 
Fukuizumi and Ohta~\cite{MR1948875} gave another sufficient condition
of stability
which is based on the result of Grillakis, Shatah, and Strauss~\cite{MR901236}
and showed that standing waves
$e^{i \omega t} \phi_{\omega}$
$(\phi_{\omega} \in \mathcal{G}_{\omega})$ 
are stable
for sufficiently large $\omega > 0$ 
when $1 < p < 1 + 4/d$. 
Fukuizumi~\cite{MR2123132} also showed the stability for large $\omega$ when $p=1+4/d$. 
Since they used a kind of perturbation argument 
to verify the sufficient condition, 
some restrictions on the frequency $\omega \in \R$ were required. 
There are several results in which  
the stability of standing waves 
$e^{i \omega t} \phi_{\omega}$ is obtained under 
some conditions on the frequency $\omega \in \R$.
Compared with these, 
there are few results in which 
the stability of standing waves 
are obtained without any additional 
conditions on the frequency $\omega \in \R$. 
McLeod, Stuart, and Troy~\cite{MR1989539} studied the behavior of 
$\p_{\omega} \phi_{\omega}$ using the continuity argument and obtained the stability of standing waves 
in one dimensional case $d = 1$. 
In particular, the standing waves 
$e^{i \omega t} \phi_{\omega}$ are 
stable for any 
$\omega > \omega_{1}$ 
if $1 < p \le 5$.
However, they assumed the boundedness 
on the potential $V$. 
Thus, the harmonic potential 
$V(x) = |x|^{2}$ and 
the inverse potential 
$V(x) = - |x|^{-\theta}$ $(\theta \in (0, 1))$ 
were not treated in their result. 

Here, using the argument
of \cite{MR3318740}, we study
the stability of standing waves
$e^{i \omega t} \phi_{\omega}$
$(\phi_{\omega} \in \mathcal{G}_{\omega})$ 
for all $\omega > \omega_{1}$. 
Our result includes unbounded potentials
such as $V(x) = |x|^{2}, - |x|^{-\theta}\; 
(\theta \in (0, 1))$ and so on. 
To study the orbital stability of the 
standing waves $e^{i \omega t} \phi_{\omega}$, 
we need the following additional condition:
\begin{enumerate}
\item[(V2)] $2V+xV'$ is a non-decreasing function of $r=|x|$. Furthermore, $2V+xV'$ is not a constant function if $p=5=1+4/d|_{d=1}$.
\end{enumerate}

\begin{remark}
Concerning the assumption (V2), note first that in the one-dimensional setting the case 
where $2V+xV'$ is constant is already excluded by (V1). Indeed, in this case we can write 
$V(r) = c_1 + c_2 r^{-2}$ for some $c_1 \in \R$ and $c_2 \in \R$. If $c_2 \ne 0$, then 
$V \notin L_{\mathrm{loc}}^1(\R)$; whereas if $c_2 = 0$, then $\partial_x V \equiv 0$.

In higher dimensions, however, one may consider the case where $2V+xV'$ is constant, 
in particular $V(r) = -r^{-2}$. In this situation, equation \eqref{NLS} is invariant 
under the scaling \eqref{eq-scale}. Hence, when $p=1+4/d$, it follows from the same 
argument as in \cite{MR691044} that all standing waves are strongly unstable. 
Therefore, in the assumption (V2), it is essential to exclude the case where 
$2V+xV'$ is constant for $p = 1 + 4/d$.
\end{remark}

Then we obtain the following: 
\begin{theorem}\label{thm-stability}
Let $d = 1, 
1 < p \le 5$, and 
$\phi_{\omega} \in \mathcal{G}
_{\omega}$.
Assume that Assumption \ref{l-wellposed}, 
(V1), and (V2). 
Then the standing wave $e^{i \omega t}\phi_{\omega}$ is stable for any $\omega > \omega_{1}$. 
\end{theorem}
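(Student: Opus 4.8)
The plan is to sidestep the Grillakis--Shatah--Strauss machinery entirely and instead characterize the orbit $\{e^{i\theta}\phi_\omega:\theta\in\R\}$ as a set of constrained energy minimizers, and then to run the Cazenave--Lions compactness argument in the form adapted to the $L^2$-critical regime by Noris--Tavares--Verzini. Fix $\omega>\omega_1$ and set $\rho:=\|\phi_\omega\|_{L^2}^2$. For $1<p<5$ I would work with the global problem
\[
 d(\rho):=\inf\bigl\{E(u):u\in X,\ \|u\|_{L^2}^2=\rho\bigr\},
\]
whereas for the critical exponent $p=5$ the energy fails to be bounded below on the whole sphere, so, following Noris--Tavares--Verzini, one localizes: one minimizes $E$ over $\{u\in X:\|u\|_{L^2}^2=\rho,\ \|u_x\|_{L^2}^2<K\}$ for a suitable $K$, on which $E$ is bounded below by the sharp Gagliardo--Nirenberg inequality. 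The point that must be checked here — and this is exactly where (V2) enters — is that, for \emph{every} $\omega>\omega_1$, the mass $\rho$ is such that $\phi_\omega$ is a (constrained) local minimizer lying strictly inside the ball; the scale-invariant situation, in which $2V+xV'$ is constant and the ground state is merely a degenerate saddle along the dilation direction, is excluded (automatically in one dimension, as in the remark after (V2)). Since $\omega>\omega_1$, the quadratic form $\int(|u_x|^2+V|u|^2+\omega|u|^2)$ is equivalent to $\|\cdot\|_X^2$ by \eqref{eq1-2}, so minimizing sequences are bounded in $X$ (via Gagliardo--Nirenberg when $p<5$, trivially when $p=5$), and testing with an almost-optimizer in \eqref{eq1-2} yields the strict binding inequality $d(\rho)<-\tfrac{\omega_1}{2}\rho$.

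The first substantial step is the compactness of minimizing sequences modulo phase. By (V1) the potential is even and non-decreasing in $r=|x|$, hence Schwarz symmetrization does not increase $\int V|u|^2$ nor $\|u_x\|_{L^2}^2$ (and leaves $\|u\|_{L^2}$ and $\|u\|_{L^{p+1}}$ fixed), so a minimizing sequence may be replaced by one consisting of nonnegative, even, decreasing functions. For such a sequence the compact embedding of radial $H^1$-functions into $L^{p+1}$ handles the nonlinear term, the strict binding inequality rules out vanishing, and a standard splitting argument — comparing the energy of the weak limit (itself a minimizer at its own mass, whose Lagrange multiplier is easily seen to exceed $\omega_1$) with a dilation of it, using the concavity of $d$ — rules out dichotomy; weak lower semicontinuity of the quadratic part then upgrades weak to strong $X$-convergence, producing a minimizer. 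The concavity used here is elementary: writing $u=\sqrt{\rho}\,w$ with $\|w\|_{L^2}=1$, the function $\rho\mapsto E(\sqrt{\rho}\,w)$ has second derivative $-\tfrac{p-1}{4}\rho^{(p-3)/2}\|w\|_{L^{p+1}}^{p+1}<0$, so $d$, being an infimum of such functions, is concave; and, being attained for every admissible $\rho$ by the compactness just established, it is in fact strictly concave. For $p=5$ the same conclusions hold in the localized formulation, with the bookkeeping of Noris--Tavares--Verzini.

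Next I would identify the minimizer set with $\{e^{i\theta}\phi_\omega:\theta\in\R\}$. Any minimizer $u_*$ satisfies $S_{\tilde\omega}'(u_*)=0$ in $X^*$ for its Lagrange multiplier $\tilde\omega$, so $u_*\in\mathcal{A}_{\tilde\omega}$; one checks $\tilde\omega>\omega_1$ directly from $d(\rho)<-\tfrac{\omega_1}{2}\rho$ and the identity $\langle S_{\tilde\omega}'(u_*),u_*\rangle=0$. Moreover $u_*\in\mathcal{G}_{\tilde\omega}$: for any $v\in\mathcal{A}_{\tilde\omega}$ the map $t\mapsto S_{\tilde\omega}(tv)$ on $(0,\infty)$ attains its maximum at $t=1$ (because $\langle S_{\tilde\omega}'(v),v\rangle=0$ and the nonlinearity is superquadratic), so rescaling $v$ to $L^2$-mass $\rho$ and using that $u_*$ minimizes $S_{\tilde\omega}$ on that sphere gives $S_{\tilde\omega}(u_*)\le S_{\tilde\omega}(v)$; by \eqref{eq1-3} this means $u_*\in\mathcal{G}_{\tilde\omega}$. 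Theorem~\ref{thm:1.1}(ii) then forces $u_*=e^{i\theta}\phi_{\tilde\omega}$ and $\rho=\|\phi_{\tilde\omega}\|_{L^2}^2$. Finally, strict concavity of $d$ (together with the continuous dependence of the minimizer on $\rho$ coming from the compactness) gives that $d$ is $C^1$ and that $\rho\mapsto-2d'(\rho)$ — the Lagrange multiplier at mass $\rho$ — is strictly increasing, i.e.\ $\omega\mapsto\|\phi_\omega\|_{L^2}^2$ is strictly increasing; hence $\|\phi_{\tilde\omega}\|_{L^2}^2=\rho=\|\phi_\omega\|_{L^2}^2$ forces $\tilde\omega=\omega$, and the set of minimizers is exactly $\{e^{i\theta}\phi_\omega:\theta\in\R\}$.

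Orbital stability then follows from the classical contradiction argument: if $e^{i\omega t}\phi_\omega$ were unstable there would exist $\psi_{0,n}\to\phi_\omega$ in $X$ and times $t_n$ with $\inf_{\theta}\|\psi_n(t_n)-e^{i\theta}\phi_\omega\|_X\ge\e_0$, where $\psi_n$ solves \eqref{NLS1} with data $\psi_{0,n}$; by conservation of $E$ and of $\|\cdot\|_{L^2}$ along the flow (Assumption~\ref{l-wellposed}) and continuity of $\rho\mapsto d(\rho)$, the functions $\psi_n(t_n)$, rescaled to mass $\rho$, form a minimizing sequence for $d(\rho)$, so the compactness and identification steps give $\psi_n(t_n)\to e^{i\theta_n}\phi_\omega$ in $X$ along a subsequence — a contradiction; the resulting a priori bound on $\|\psi_n(t)\|_X$ also furnishes the global existence required in the definition of stability, and when $p=5$ one further notes that a trajectory starting near $\phi_\omega$ cannot reach $\{\|u_x\|_{L^2}^2=K\}$, by the energy barrier, so the localized problem is the relevant one for all time. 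The step I expect to be the main obstacle is the compactness: for potentials that are neither confining nor of one sign — for instance $V(x)=-|x|^{-\theta}$ — preventing mass from escaping to spatial infinity or spreading out is genuinely delicate, and the argument really hinges on the monotonicity of $V$ in (V1) (to reduce to symmetric decreasing sequences) together with the strict binding inequality; the secondary difficulty is making the localized minimization for $p=5$ well posed simultaneously for all $\omega>\omega_1$, which is precisely the purpose of (V2).
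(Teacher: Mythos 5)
Your route (Cazenave--Lions constrained minimization, with the Noris--Tavares--Verzini localization at $p=5$) is genuinely different from the paper's, which verifies the Grillakis--Shatah--Strauss slope condition $\frac{d}{d\omega}\|\phi_\omega\|_{L^2}^2>0$ directly (Theorem~\ref{s-sta} plus the ODE analysis of $v_\omega=\p_\omega u_\omega$ in Lemmas~\ref{lem:key1}--\ref{lem:v0posi}). Unfortunately, your argument has a gap at exactly the point where the paper does its real work. Your identification step shows that any minimizer of $E$ at mass $\rho=\|\phi_\omega\|_{L^2}^2$ equals $e^{i\theta}\phi_{\tilde\omega}$ for \emph{some} $\tilde\omega$ with $\|\phi_{\tilde\omega}\|_{L^2}^2=\rho$, and you then conclude $\tilde\omega=\omega$ by asserting that $\omega\mapsto\|\phi_\omega\|_{L^2}^2$ is strictly increasing. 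But that monotonicity is precisely the slope condition the theorem is about, and your derivation of it is circular: strict concavity of $d(\rho)$ only shows that the Lagrange multiplier is monotone \emph{along the branch of mass minimizers}. It says nothing about ground states $\phi_\omega$ that do not lie on that branch. If $\omega\mapsto\|\phi_\omega\|_{L^2}^2$ were, say, S-shaped, the minimizing branch would simply skip the decreasing portion (and any kinks of $d$ would skip whole intervals of multipliers), $d$ would remain strictly concave with monotone multiplier, and for the skipped frequencies $\phi_\omega$ would not be a constrained minimizer at any mass --- so the compactness-of-minimizing-sequences argument would prove nothing about its orbit. In short, ``$\phi_\omega$ is an energy minimizer at its own mass for \emph{every} $\omega>\omega_1$'' is not a consequence of concavity; it is equivalent to the hard part of the theorem.

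A symptom of the same problem is that hypothesis (V2) never actually enters your computations. In the paper, the monotonicity of $2V+xV'$ is the decisive input for \emph{all} $1<p\le 5$ (not just $p=5$): combined with the virial-type identity \eqref{eq:key1} and the single-sign-change structure of $v_\omega$ (Lemmas~\ref{lem:posione} and \ref{lem:v0posi}, the latter being where $d=1$ is essential), it forces $\mu'(\omega)>0$. Your proposal defers this to the phrases ``this is exactly where (V2) enters'' and ``the purpose of (V2),'' without a mechanism. To salvage your approach you would have to prove, independently, either that the action ground state coincides with the mass minimizer for every $\omega>\omega_1$, or the injectivity of $\omega\mapsto\|\phi_\omega\|_{L^2}^2$ --- and the only available proof of the latter under (V1)--(V2) is the paper's Section~5 argument, at which point the variational detour is unnecessary. (The compactness issues you flag for non-confining $V$, and the interior/boundary dichotomy in the localized problem at $p=5$, are additional technical debts, but they are secondary to this structural gap.)
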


\begin{remark}
\begin{enumerate}
\renewcommand{\labelenumi}{(\roman{enumi})}
\item Our result covers $V(x) = |x|^{2}$, $V(x) = -|x|^{-\theta}$ with $\theta \in (0,1)$, and combinations of these.
As far as the authors know, 
in these cases, the stability of 
standing waves for all frequencies has not been studied so far. 
\item For the case of $1 + 4/d
< p < 2^{*} - 1$, it is known that 
the standing waves $e^{i \omega t} 
\phi_{\omega}$ $(\phi_{\omega} \in 
\mathcal{G}_{\omega})$ are unstable 
for sufficiently large $\omega > 0$. 
Thus, we cannot obtain the 
stability for all frequency in this 
case, so that 
the condition $1 < p \leq 1 + 
4/d$ in Theorem~\ref{thm-stability} is essential. 
\end{enumerate}    
\end{remark}
To prove Theorem~\ref{thm-stability}, 
we use the argument of \cite{MR3318740}, 
in which they studied the stability for the 
following nonlinear Schr\"{o}dinger 
equation on the unit ball $B_1$ 
with the Dirichlet boundary condition: 
\begin{equation}\label{NLS-b}
    \left\{\begin{alignedat}{2}
   &i \psi_t
     =- \Delta \psi 
     - |\psi|^{p - 1} \psi& \quad
   &\text{in }\R \times B_{1}, 
\\ & \psi = 0 & \quad 
   &\mbox{in }\R \times \p B_{1}. 
    \end{alignedat}\right.
\end{equation}
Actually, since the equations 
\eqref{NLS-b} are not scale-invariant, 
we cannot check the sign of 
$\frac{d}{d \omega}\|\phi_{\omega}\|_{L^{2}}^{2}$
at least directly. 
To overcome the difficulty, 
Noris, Tavares, and Verzini~\cite{MR3318740} considered
the following optimization problem with two constraints: 
\[  M_{\alpha} := 
    \sup\Bigl\{
    \int_{B_{1}} |u(x)|^{p + 1} dx \colon 
    u \in H_{0}^{1}(B_{1}), 
    \: 
    \int_{B_{1}} 
    |u(x)|^{2} dx = 1, \: 
    \int_{B_{1}} 
    |\nabla u(x)|^{2} dx 
    = \alpha
    \Bigr\} \]
for $\alpha > 0$. 
Then they showed that for any $\alpha > \lambda_{1}(B_{1})$, where $\lambda_{1}(B_{1})$ is the first eigenvalue of the Dirichlet Laplacian in the unit ball $B_{1}$, there exists a maximizer $u_{\alpha} \in H_{0}^{1}(B_{1})$ for $M_{\alpha}$. 
In addition, 
$u_{\alpha} \in 
H_{0}^{1}(B_{1})$ is a positive function
and 
there exist 
$\mu_{\alpha} > 0$ and 
$\omega_{\alpha} 
> - \lambda_{1}(B_{1})$
such that 
\begin{align*}
    - \Delta u_{\alpha} 
    + \omega_{\alpha} 
    u_{\alpha} 
   &=\mu_{\alpha} u_{\alpha}^{p}, &
    \int_{B_{1}} 
    |u_{\alpha}(x)|^{2} dx 
   &= 1, &
    \int_{B_{1}} 
    |\nabla u_{\alpha}(x)|^{2} dx 
   & = \alpha. 
\end{align*}
Then they showed that 
\[  \frac{d}{d\omega}\|\phi_\omega\|_{L^{2}}
    ^{2} = C_{\alpha} 
    \frac{\p \mu_{\alpha}}
    {\p \alpha},    \]
where $C_{\alpha}$ is 
a positive constant 
depending on $\alpha 
> \lambda_{1}(B_{1})$. 
Thus, 
in order to check the sign 
of $\frac{d}{d\omega}\|\phi_\omega\|_{L^{2}}^2$, 
it suffices to do that of 
$\p_{\alpha} \mu_{\alpha}$. 
Then by exploiting the 
Dirichlet boundary condition, 
they showed that 
$\p_{\alpha} \mu_{\alpha} > 0$ for all 
$\alpha > \lambda_{1}(B_{1})$ when $1 < p \leq 1 + 4/d$. 

Here, we simplify the above argument and determine the sign of $\frac{d}{d\omega}\|\phi_\omega\|_{L^{2}}^2$ directly.
We also note that since our spatial domain is the entire space, we cannot apply the argument of \cite{MR3318740} directly.
To overcome this difficulty, we investigate the asymptotic behavior of $\p_{\omega} \phi_{\omega}$
more precisely and use (V2).
\begin{remark}
Here, we emphasize that 
we essentially need the condition $d = 1$ 
for obtaining 
$\p_{\omega} \phi_{\omega}(0) > 0$
(see Lemma \ref{lem:v0posi} 
below for details). Once 
$\p_{\omega} \phi_{\omega}(0) > 0$ 
is established, our result can be extended to higher dimensions under certain assumptions.
\end{remark}

The rest of this paper is organized as follows.
In Section~\ref{sec-ex}, we show the existence of ground states. 
In Section~\ref{sec-po-gs}, we prove the positivity and evenness of the ground states. 
In Section~\ref{sec-unique}, we show the uniqueness of the ground state. 
In Section~\ref{sec-nond}, we establish its non-degeneracy. 
In Section~\ref{sec-sta}, we study the orbital stability of the standing waves $e^{i \omega t}\phi_{\omega}$ and give the proof of Theorem~\ref{thm-stability}. 
In Appendix~\ref{sec-bb} we prove the finiteness of $\omega_{1}$ and \eqref{eq1-1}. 
Finally, in Appendix~\ref{sec-Morse}, we prove an auxiliary result that is needed for the non-degeneracy of the ground state.

\section{Existence of ground state}\label{sec-ex}
In this section, we show the existence 
of the ground state of \eqref{sp1}. 
To this end, we consider the following 
minimization problem:
\begin{equation} \label{eq2-8}
    d(\omega) = \inf\{
    S_{\omega}(u) \colon u \in X \setminus 
    \{0\}, \: K_{\omega}(u) = 0\},  
\end{equation}
where 
\begin{equation}\label{eq-N}
    K_{\omega}(u) 
    := \langle S_{\omega}'(u), u \rangle 
    = \|u\|_{X_{\omega}}^2 
    - \|u\|_{L^{p+1}}^{p + 1}
\end{equation}
is the Nehari functional. Concerning the minimization problem, 
we obtain the following. 
\begin{theorem}\label{thm-ex}
Assume that (V1).
There exists a minimizer
$\phi_{\omega} \in X \setminus \{0\}$ for $d(\omega)$. 
\end{theorem}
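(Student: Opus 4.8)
The plan is to run a Nehari-manifold minimization, using the monotonicity of $V$ in $r=|x|$ from (V1) to recover, via symmetrization, the compactness that the embedding $X\hookrightarrow L^{p+1}(\R)$ may lack. First I would record the elementary reductions. For $\omega>\omega_1$ the quadratic form $\|\cdot\|_{X_\omega}^2$ is a norm on $X$ equivalent to $\|\cdot\|_X$ (from the definition of $\omega_1$ and \eqref{eq1-1}), and in one dimension $X\hookrightarrow H^1(\R)\hookrightarrow L^q(\R)$ for every $q\in[2,\infty]$, so $S_\omega,K_\omega\in C^1(X,\R)$. On the constraint set $\{u\in X\setminus\{0\}\colon K_\omega(u)=0\}$, subtracting $\tfrac12 K_\omega(u)=0$ gives the identity $S_\omega(u)=\tfrac{p-1}{2(p+1)}\|u\|_{X_\omega}^2=\tfrac{p-1}{2(p+1)}\|u\|_{L^{p+1}}^{p+1}$, and combining $\|u\|_{X_\omega}^2=\|u\|_{L^{p+1}}^{p+1}$ with the Sobolev inequality bounds $\|u\|_{X_\omega}$ away from $0$ there; hence $d(\omega)>0$. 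The set is nonempty, since for every $v\in X\setminus\{0\}$ the function $t_0 v$ with $t_0:=(\|v\|_{X_\omega}^2/\|v\|_{L^{p+1}}^{p+1})^{1/(p-1)}>0$ lies on it; thus $0<d(\omega)<\infty$.

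Next I would take a minimizing sequence $(v_n)\subset X\setminus\{0\}$ with $K_\omega(v_n)=0$ and $S_\omega(v_n)\to d(\omega)$, and replace it by a symmetrized one. By the diamagnetic inequality $\||v_n|\|_{X_\omega}\le\|v_n\|_{X_\omega}$, the P\'olya--Szeg\H{o} inequality, and the fact that $V$ is non-decreasing in $r=|x|$ (so that $\int_\R V\,|v_n^{*}|^{2}\,dx\le\int_\R V\,|v_n|^{2}\,dx$ by a layer-cake/Hardy--Littlewood argument), the symmetric decreasing rearrangement $v_n^{*}$ of $|v_n|$ satisfies $\|v_n^{*}\|_{X_\omega}\le\|v_n\|_{X_\omega}$ and $\|v_n^{*}\|_{L^{p+1}}=\|v_n\|_{L^{p+1}}$; projecting $v_n^{*}$ back onto the Nehari manifold (multiplying by $t_n:=(\|v_n^{*}\|_{X_\omega}^{2}/\|v_n^{*}\|_{L^{p+1}}^{p+1})^{1/(p-1)}\in(0,1]$) does not increase $S_\omega$, by the explicit formula above. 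So I may assume, keeping the notation $(v_n)$, that each $v_n$ is nonnegative, even, and non-increasing in $|x|$. Since $S_\omega(v_n)=\tfrac{p-1}{2(p+1)}\|v_n\|_{X_\omega}^2$, the sequence is bounded in $X_\omega$, hence in $H^1(\R)$, and the radial monotonicity gives $v_n(x)^2\le(2|x|)^{-1}\|v_n\|_{L^2}^2$ for $x\ne0$, whence $\int_{|x|>R}|v_n|^{p+1}\,dx\to0$ as $R\to\infty$, uniformly in $n$.

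Then I would pass to a subsequence along which $v_n\rightharpoonup\phi_\omega$ weakly in $X_\omega$ (hence in $H^1(\R)$), and $v_n\to\phi_\omega$ in $L^{p+1}_{\mathrm{loc}}(\R)$ by the Rellich theorem; together with the uniform tail estimate this upgrades to $v_n\to\phi_\omega$ strongly in $L^{p+1}(\R)$. Hence $\|\phi_\omega\|_{L^{p+1}}^{p+1}=\lim_n\|v_n\|_{L^{p+1}}^{p+1}=\tfrac{2(p+1)}{p-1}d(\omega)>0$, so $\phi_\omega\ne0$, and by weak lower semicontinuity of the Hilbert norm $\|\cdot\|_{X_\omega}$ one gets $K_\omega(\phi_\omega)=\|\phi_\omega\|_{X_\omega}^2-\|\phi_\omega\|_{L^{p+1}}^{p+1}\le\liminf_n(\|v_n\|_{X_\omega}^2-\|v_n\|_{L^{p+1}}^{p+1})=0$. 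Were $K_\omega(\phi_\omega)<0$, the Nehari projection $t_0\phi_\omega$ with $t_0<1$ would satisfy $S_\omega(t_0\phi_\omega)=\tfrac{p-1}{2(p+1)}t_0^2\|\phi_\omega\|_{X_\omega}^2<\tfrac{p-1}{2(p+1)}\liminf_n\|v_n\|_{X_\omega}^2=d(\omega)$, contradicting the definition of $d(\omega)$; so $K_\omega(\phi_\omega)=0$, and then $S_\omega(\phi_\omega)=\tfrac{p-1}{2(p+1)}\|\phi_\omega\|_{X_\omega}^2\le\liminf_n S_\omega(v_n)=d(\omega)$ forces $S_\omega(\phi_\omega)=d(\omega)$. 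Thus $\phi_\omega\in X\setminus\{0\}$ is the required minimizer.

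The step I expect to be the main obstacle is the strong $L^{p+1}$-convergence of the minimizing sequence. For potentials that remain bounded at infinity, e.g.\ $V(x)=-|x|^{-\theta}$ with $\theta\in(0,1)$ (for which $X=H^1(\R)$ up to norm equivalence), the embedding $X\hookrightarrow L^{p+1}(\R)$ is not compact, and a minimizing sequence could a priori lose $L^{p+1}$-mass to spatial infinity; one would otherwise need a concentration-compactness analysis comparing $d(\omega)$ with the problem at infinity. The evenness and monotonicity in (V1) are exactly what make the symmetrization legitimate and force the uniform decay $v_n(x)^2\le(2|x|)^{-1}\|v_n\|_{L^2}^2$, which confines the $L^{p+1}$-mass to a fixed compact set and thereby restores compactness. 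The remaining ingredients --- the norm equivalence, the Nehari identities, and the projection estimates --- are routine.
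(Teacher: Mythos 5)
Your proposal is correct and follows essentially the same route as the paper: Schwarz symmetrization of a minimizing sequence (using the monotonicity of $V$ via a layer-cake argument for the potential term), compactness of radial non-increasing sequences in $L^{p+1}$, and weak lower semicontinuity of $\|\cdot\|_{X_\omega}$. Your explicit tail estimate $v_n(x)^2\le(2|x|)^{-1}\|v_n\|_{L^2}^2$ is just an unpacking of the radial compactness lemma the paper cites, and your scaling argument ruling out $K_\omega(\phi_\omega)<0$ is the same mechanism as the paper's Lemma~\ref{lem:2.2}.
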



We put 
\begin{align} 
    \label{eq-I}
& Q_{\omega}(u) = S_{\omega}(u) - 
	\frac{1}{p+1} K_{\omega}(u) 
	= \frac{p - 1}{2(p+1)}
	\|u\|_{X_{\omega}}^{2}, \\ 
\label{eq-J}
& N(u) = S_{\omega}(u) - 
	\frac{1}{2} K_{\omega}(u) 
	= \frac{p - 1}{2(p+1)}
	\|u\|_{L^{p+1}}^{p+1}.  
\end{align}
Then we have
\begin{equation} \label{eq2-4}
\begin{split}
	d(\omega)
	&= \inf\{ Q_{\omega}(u) 
\colon u \in X 
\setminus \{0\}, \: K_{\omega} (u) = 0 \} \\
    & = \inf\{N(u) 
\colon u \in X 
\setminus \{0\}, \: K_{\omega} (u) = 0 \}.
\end{split}
\end{equation}

\begin{lemma}\label{lemma-lb}
$d(\omega) > 0$ for $\omega>\omega_1$. 
\end{lemma}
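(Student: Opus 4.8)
The plan is to show that the constraint set $\{u\in X\setminus\{0\}:K_\omega(u)=0\}$ is bounded away from $0$ in the $\|\cdot\|_{X_\omega}$-norm, and then read off the lower bound for $d(\omega)$ from the representation in \eqref{eq2-4} together with the identity $Q_\omega(u)=\frac{p-1}{2(p+1)}\|u\|_{X_\omega}^2$ from \eqref{eq-I}. This is the standard "Nehari manifold is uniformly away from the origin" argument.

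The first step is to establish coercivity of $\|\cdot\|_{X_\omega}^2$ on $X$, namely $\|u\|_{X_\omega}^2\ge c(\omega)\|u\|_X^2$ for some $c(\omega)>0$ when $\omega>\omega_1$. This follows from the definition \eqref{eq1-2} of $\omega_1$: by homogeneity, $\int_{\R}(|u_x|^2+V(x)|u|^2)\,dx\ge-\omega_1\|u\|_{L^2}^2$ for every $u\in X$, hence $\|u\|_{X_\omega}^2\ge(\omega-\omega_1)\|u\|_{L^2}^2$. Writing $\|u\|_X^2=\|u\|_{X_{\omega_1+1}}^2=\|u\|_{X_\omega}^2+(\omega_1+1-\omega)\|u\|_{L^2}^2$ and using the previous bound to control $\|u\|_{L^2}^2$ when $\omega<\omega_1+1$ (the case $\omega\ge\omega_1+1$ being immediate) yields the claimed coercivity, with $c(\omega)=\min\{1,\omega-\omega_1\}$. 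Combining this with the embedding $X\hookrightarrow H^1(\R)$ from \eqref{eq1-1} (Lemma~\ref{lema-1}(ii)), one gets $\|u\|_{H^1}^2\le C\|u\|_{X_\omega}^2$; in particular $\|\cdot\|_{X_\omega}$ is a genuine norm on $X$.

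Next, on the Nehari manifold $K_\omega(u)=0$ one has $\|u\|_{X_\omega}^2=\|u\|_{L^{p+1}}^{p+1}$ by \eqref{eq-N}. Using the one-dimensional Sobolev embedding $H^1(\R)\hookrightarrow L^{p+1}(\R)$, valid for all $1<p<\infty$, together with the bound from the previous step, $\|u\|_{L^{p+1}}^{p+1}\le C'\|u\|_{X_\omega}^{p+1}$, so $\|u\|_{X_\omega}^2\le C'\|u\|_{X_\omega}^{p+1}$. Since $u\ne 0$ we have $\|u\|_{X_\omega}>0$, and dividing by $\|u\|_{X_\omega}^2$ gives $\|u\|_{X_\omega}^{p-1}\ge 1/C'$, i.e. $\|u\|_{X_\omega}\ge\rho:=(C')^{-1/(p-1)}>0$ for every $u$ in the constraint set. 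Therefore, by \eqref{eq2-4} and \eqref{eq-I},
\[
 d(\omega)=\inf\Bigl\{\tfrac{p-1}{2(p+1)}\|u\|_{X_\omega}^2:u\in X\setminus\{0\},\ K_\omega(u)=0\Bigr\}\ge\tfrac{p-1}{2(p+1)}\,\rho^2>0.
\]

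The only mildly delicate point is the coercivity estimate in the first step: because $V$ need not be sign-definite, the lower bound on $\|u\|_{X_\omega}^2$ cannot be obtained term by term and must instead be routed through the variational definition of $\omega_1$ as above. Everything else reduces to the Sobolev embedding in dimension one and the algebraic identities already recorded in \eqref{eq2-4}.
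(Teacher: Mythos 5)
Your proof is correct and follows essentially the same route as the paper: on the Nehari constraint one has $\|u\|_{X_\omega}^{2}=\|u\|_{L^{p+1}}^{p+1}\le C\|u\|_{X_\omega}^{p+1}$ by the Sobolev embedding, forcing $\|u\|_{X_\omega}$ away from zero, and then $d(\omega)>0$ follows from \eqref{eq2-4} and \eqref{eq-I}. The only difference is that you spell out the coercivity $\|u\|_{H^1}^2\le C\|u\|_{X_\omega}^2$ for $\omega>\omega_1$, which the paper delegates to Lemma~\ref{lema-1}(ii) in the appendix.
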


\begin{proof}
Let $u \in X \setminus \{0\}$ satisfy $K_{\omega}(u) = 0$. By the Sobolev embedding \eqref{eq1-1}, we have 
\[  \|u\|_{X_{\omega}}^{2} = \|u\|_{L^{p+1}}^{p+1} 
	\leq C\|u\|_{X_{\omega}}^{p+1} \]
for some $C > 0$, which is independent of $u \in X$. This implies that $\|u\|_{X_{\omega}}\geq (1/C)^{\frac{1}{p-1}}$. 
From this and \eqref{eq2-4}, we have obtained the desired result. 
\end{proof}

\begin{lemma}\label{lem:2.2}
If $u\in X$ satisfies $Q_\omega(u)\le d(\omega)\le N(u)$, then $u$ is a minimizer for $d(\omega)$. 
\end{lemma}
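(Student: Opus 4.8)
The plan is to exploit the two identities that are built into the definitions \eqref{eq-I}--\eqref{eq-J}, namely $S_\omega(u)=Q_\omega(u)+\tfrac{1}{p+1}K_\omega(u)=N(u)+\tfrac12 K_\omega(u)$, so that
\[ Q_\omega(u)-N(u)=\Bigl(\tfrac12-\tfrac{1}{p+1}\Bigr)K_\omega(u)=\tfrac{p-1}{2(p+1)}K_\omega(u), \]
together with a rescaling (fibering) argument along the ray $\lambda\mapsto\lambda u$. First I would record two immediate consequences of the hypothesis $Q_\omega(u)\le d(\omega)\le N(u)$: it forces $Q_\omega(u)-N(u)\le 0$, hence $K_\omega(u)\le 0$; and it forces $u\ne 0$, since $u=0$ would give $Q_\omega(u)=N(u)=0$ and therefore $d(\omega)=0$, contradicting Lemma~\ref{lemma-lb}.

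Next I would rule out the strict inequality $K_\omega(u)<0$. In that case $u\ne 0$ gives $\|u\|_{L^{p+1}}^{p+1}>\|u\|_{X_\omega}^{2}>0$ (the latter because $\|\cdot\|_{X_\omega}$ is a genuine norm for $\omega>\omega_1$), so the continuous function $\lambda\mapsto K_\omega(\lambda u)=\lambda^{2}\|u\|_{X_\omega}^{2}-\lambda^{p+1}\|u\|_{L^{p+1}}^{p+1}$ is positive for small $\lambda>0$ and negative at $\lambda=1$; hence there exists $\lambda_0\in(0,1)$ with $K_\omega(\lambda_0 u)=0$. Since $\lambda_0 u\ne 0$ is then admissible for \eqref{eq2-8}, and since $Q_\omega(\lambda u)=\tfrac{p-1}{2(p+1)}\lambda^{2}\|u\|_{X_\omega}^{2}$ is strictly increasing in $\lambda>0$, I obtain
\[ d(\omega)\le S_\omega(\lambda_0 u)=Q_\omega(\lambda_0 u)<Q_\omega(u)\le d(\omega), \]
a contradiction. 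Therefore $K_\omega(u)=0$.

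Finally, with $K_\omega(u)=0$ and $u\ne 0$ the function $u$ is itself admissible for \eqref{eq2-8}, so $S_\omega(u)\ge d(\omega)$; on the other hand $S_\omega(u)=Q_\omega(u)\le d(\omega)$ by hypothesis. Hence $S_\omega(u)=d(\omega)$, i.e. $u$ is a minimizer for $d(\omega)$. The only mildly delicate point is the fibering step — checking that the intermediate scaling parameter $\lambda_0\in(0,1)$ exists and strictly decreases the value of the functional — but this is entirely standard for Nehari-type constraints, and the rest is bookkeeping with \eqref{eq-I}--\eqref{eq-J}.
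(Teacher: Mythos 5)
Your proof is correct and takes essentially the same route as the paper's: both rescale $u$ onto the Nehari constraint via the parameter $\lambda_0=(Q_\omega(u)N(u)^{-1})^{1/(p-1)}\in(0,1]$ (the intermediate-value zero of $\lambda\mapsto K_\omega(\lambda u)$ that you produce is exactly this $\lambda_0$) and derive the contradiction $d(\omega)\le\lambda_0^2Q_\omega(u)<Q_\omega(u)\le d(\omega)$ unless $\lambda_0=1$. The only cosmetic difference is that the paper writes $\lambda_0$ explicitly and runs a single chain of inequalities, whereas you split into the cases $K_\omega(u)<0$ and $K_\omega(u)=0$.
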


\begin{proof}
Note that $u\ne 0$ because $N(u)\ge d(\omega)>0$. We put $\lambda_0:= 
(Q_\omega(u)N(u)^{-1})^{\frac{1}{p-1}}
\in (0, 1]$. Then we have $K_\omega(\lambda_0u)=0$. From 
\eqref{eq2-4} and our assumption, we obtain 
\[  Q_\omega(u)
    \le d(\omega)
    \le Q_{\omega}(\lambda_0 u)
    =\lambda_0^2 Q_{\omega}(u)
    \le Q_{\omega}(u).   \]
This implies $Q_\omega(u)=d(\omega)$, $\lambda_0=1$, and $K_\omega(u)=0$. 
\end{proof}


\begin{proof}[Proof of Theorem \ref{thm-ex}]
Let $\{u_{n}\}$ be a minimizing sequence of $d(\omega)$, that is, $K_\omega(u_n)=0$ and
\begin{align*}
    Q_\omega(u_n)
    =N(u_n)\to d(\omega) 
    \quad \mbox{as } n \to \infty.
\end{align*}
Note that $\{u_{n}\}$ is bounded in $X$. We put $v_{n} := |u_{n}|^{*}$, where $f^{*}$ is the Schwarz rearrangement of 
the function $f$. 
Then we see that $\|v_n\|_{L^{q}}=\|u_n\|_{L^{q}}$ for any $q\ge 1$ and $\|\nabla v_n\|_{L^2}\le\|\nabla u_n\|_{L^2}$ (see \cite[Section~3.3 and Lemma~7.17]{LiebLoss2001}). Moreover, for any $f\in X$, the inequality 
\begin{equation}\label{eq:Vreaineq}
    \int_{\R} V(x)|f^*|^2 dx \le \int_{\R} V(x)|f|^2 dx
\end{equation}
holds. Indeed, by the increasing monotonicity 
of $V$, we have $\int_{\R}V(x)\chi_{\{x\colon 
|f(x)|^2>s\}} \, ds 
\ge\int_{\R}V(x)\chi_{\{x\colon |f^*(x)|^2>s\}} 
\, ds$ for all $s>0$. From this and the layer cake representation $|f(x)|^2=\int_0^\infty\chi_{\{x\colon |f(x)|^2>s\}}\,ds$ (see \cite[Theorem 1.13 (4)]{LiebLoss2001}), we obtain
\begin{align*}
    \int_{\R}V(x)|f^*|^2 \,dx
   &=\int_{\R}\int_0^\infty V(x)\chi_{\{x\colon |f^*(x)|^2>s\}}\,ds\,dx
\\ &\le \int_0^\infty\int_{\R}V(x)\chi_{\{x\colon |f(x)|^2>s\}}\,ds\,dx
    =\int_{\R}V(x)|f|^2 \,dx.
\end{align*}
Therefore, we have $\|v_n\|_X\le \|u_n\|_X$. This implies $\{v_{n}\}$ is also bounded in $X$. Then up to subsequence (we still denote it by the same symbol), there exists $u_{\infty} \in X$ such that $v_{n}$ converges weakly to $u_{\infty}$ in $X$. By the weak lower semi-continuity of norms, we have \begin{align*}
    Q_\omega(u_\infty)\le \lim_{n\to\infty}Q_\omega(v_n)
    \le \lim_{n\to\infty}Q_\omega(u_n)
    =d(\omega).
\end{align*}
Moreover, by the radial monotonicity compactness lemma \cite[Proposition 1.7.1]{MR2002047},
we have  
\begin{equation} \label{eq2-7}
    N(v_\infty)
    =\lim_{n\to\infty}N(v_{n})
    =\lim_{n\to\infty}N(u_{n}) 
    =d(\omega). 
  \end{equation}
By Lemma~\ref{lem:2.2}, $u_{\infty} \in X$ is a minimizer for $d(\omega)$. This completes the proof. 
\end{proof}
We set 
\[  \mathcal{M}_\omega
    :=\{u\in X\colon K_\omega(u)=0,\: S_\omega(u)=d(\omega)\}, \]
which is the set of all minimizers for $d(\omega)$.

\begin{theorem}\label{thm:2.2}
For any $\omega > 0$, we have 
$\mathcal{M}_\omega=\mathcal{G}_\omega$. 
\end{theorem}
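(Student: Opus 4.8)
The plan is to show the two inclusions $\mathcal{M}_\omega\subset\mathcal{G}_\omega$ and $\mathcal{G}_\omega\subset\mathcal{M}_\omega$ separately, using the standard Nehari-manifold characterization of the ground state. First I would record the elementary fact that every nontrivial solution $v\in\mathcal{A}_\omega$ of \eqref{sp1} satisfies $K_\omega(v)=\langle S_\omega'(v),v\rangle=0$, so $v$ is an admissible competitor in the minimization problem \eqref{eq2-8}; hence $d(\omega)\le S_\omega(v)$ for all $v\in\mathcal{A}_\omega$. Combined with the definition \eqref{eq1-3} of $\mathcal{G}_\omega$, this already shows $d(\omega)\le\inf_{v\in\mathcal{A}_\omega}S_\omega(v)=S_\omega(\phi)$ for any $\phi\in\mathcal{G}_\omega$ (once we know $\mathcal{G}_\omega\ne\varnothing$; alternatively one can argue directly with the infimum).

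For $\mathcal{M}_\omega\subset\mathcal{G}_\omega$: let $u\in\mathcal{M}_\omega$. By Theorem~\ref{thm-ex} such $u$ exists, and $u\ne 0$ with $K_\omega(u)=0$, $S_\omega(u)=d(\omega)$. The key step is to show $u$ is a critical point of $S_\omega$, i.e.\ $u\in\mathcal{A}_\omega$. This is a Lagrange-multiplier argument: $u$ minimizes $S_\omega$ on the constraint set $\{K_\omega=0\}$, so $S_\omega'(u)=\Lambda K_\omega'(u)$ for some $\Lambda\in\R$, provided $K_\omega'(u)\ne 0$ in $X^*$. Pairing with $u$ gives $0=K_\omega(u)=\langle S_\omega'(u),u\rangle=\Lambda\langle K_\omega'(u),u\rangle$, and a direct computation using \eqref{eq-N} shows $\langle K_\omega'(u),u\rangle=2\|u\|_{X_\omega}^2-(p+1)\|u\|_{L^{p+1}}^{p+1}=(2-(p+1))\|u\|_{X_\omega}^2=(1-p)\|u\|_{X_\omega}^2<0$ on the constraint (here I used $K_\omega(u)=0$ to replace $\|u\|_{L^{p+1}}^{p+1}$ by $\|u\|_{X_\omega}^2$). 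Hence $\langle K_\omega'(u),u\rangle\ne 0$, which both justifies that $K_\omega'(u)\ne 0$ and forces $\Lambda=0$, so $S_\omega'(u)=0$, i.e.\ $u\in\mathcal{A}_\omega$. Since $S_\omega(u)=d(\omega)\le S_\omega(v)$ for every $v\in\mathcal{A}_\omega$, we conclude $u\in\mathcal{G}_\omega$.

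For $\mathcal{G}_\omega\subset\mathcal{M}_\omega$: let $\phi\in\mathcal{G}_\omega$. Then $\phi\in\mathcal{A}_\omega$, so $K_\omega(\phi)=0$, and $\phi\ne 0$, so $\phi$ is admissible for $d(\omega)$, giving $d(\omega)\le S_\omega(\phi)$. For the reverse inequality, pick any minimizer $u\in\mathcal{M}_\omega$ (nonempty by Theorem~\ref{thm-ex}); by the previous paragraph $u\in\mathcal{G}_\omega$, so by definition of $\mathcal{G}_\omega$ both $S_\omega(\phi)\le S_\omega(u)=d(\omega)$ and $S_\omega(u)\le S_\omega(\phi)$, whence $S_\omega(\phi)=d(\omega)$. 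Therefore $\phi$ satisfies $K_\omega(\phi)=0$ and $S_\omega(\phi)=d(\omega)$, i.e.\ $\phi\in\mathcal{M}_\omega$. Combining the two inclusions gives $\mathcal{M}_\omega=\mathcal{G}_\omega$.

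The main obstacle is the Lagrange-multiplier step, specifically verifying that the constraint functional $K_\omega$ has nonvanishing derivative at the minimizer so that the multiplier rule applies and then that the multiplier is zero; both are handled by the single computation of $\langle K_\omega'(u),u\rangle$ on $\{K_\omega=0\}$, which is negative because $p>1$. Everything else is bookkeeping with the definitions of $d(\omega)$, $\mathcal{A}_\omega$, and $\mathcal{G}_\omega$, together with the existence result Theorem~\ref{thm-ex}. One should also note the minor point that the statement is for $\omega>0$ while $d(\omega)$ and the preceding lemmas are stated for $\omega>\omega_1$; I would simply assume $\omega>\omega_1$ throughout (consistent with the rest of the section), or remark that this is the intended range.
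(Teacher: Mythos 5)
Your proposal is correct and follows essentially the same route as the paper: both inclusions are obtained from the Nehari-manifold characterization, with the key step being the Lagrange multiplier argument where $\langle K_\omega'(u),u\rangle=-(p-1)\|u\|_{L^{p+1}}^{p+1}=(1-p)\|u\|_{X_\omega}^2\neq 0$ on the constraint forces the multiplier to vanish. Your additional remarks (explicitly checking $K_\omega'(u)\neq 0$ so the multiplier rule applies, and noting that the stated range $\omega>0$ should read $\omega>\omega_1$) are correct refinements of the paper's argument rather than deviations from it.
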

\begin{proof}
Let $\phi_{\omega} \in \mathcal{G}_{\omega}$. 
Then we have $K_{\omega}(\phi_{\omega}) 
= \langle S_{\omega}'(\phi_{\omega}), 
\phi_{\omega} \rangle = 0$. 
This together with \eqref{eq2-8} yields that 
    \begin{equation} \label{eq2-9}
    d(\omega) \leq S_{\omega}(\phi_{\omega}).
    \end{equation}
Next, let $v_{\omega} \in 
\mathcal{M}_{\omega}$. 
Then there exists a Lagrange
multiplier $\lambda _{\omega}\in \R$ 
such that
$S_{\omega}'(v_{\omega}) = \lambda_{\omega} 
K_{\omega}'(v_{\omega})$. 
Since $K_{\omega}(v_{\omega}) = 0$, 
we obtain 
$\langle S_{\omega}'(v_{\omega}), 
v_{\omega} \rangle = K_{\omega}(v_{\omega}) 
= 0$ and 
 $\langle K_{\omega}'(v_{\omega}), 
v_{\omega} \rangle = -(p-1) 
\|v_{\omega}\|_{L^{p+1}}^{p+1} \neq 0$. 
Hence, we find that $\lambda_{\omega} = 0$, 
that is, $S_{\omega}'(v_{\omega}) = 0$. 
Thus, by \eqref{eq1-3}, we obtain 
$S_{\omega}(\phi_{\omega}) 
\leq S_{\omega}(v_{\omega}) = d(\omega)$. 
This together with \eqref{eq2-9} yields that 
$S_{\omega}(\phi_{\omega}) 
= S_{\omega}(v_{\omega}) 
= d(\omega)$. 
This implies that $\phi_{\omega} \in 
\mathcal{M}_{\omega}$ and $v_{\omega} 
\in \mathcal{G}_{\omega}$. 
Therefore, we conclude that 
$\mathcal{M}_{\omega} 
= \mathcal{G}_{\omega}$. 
\end{proof}
Now, we give the proof of Theorem 
\ref{thm:1.1} \textrm{(i)}. 
\begin{proof}[Proof of Theorem 
\ref{thm:1.1} \textrm{(i)}] 
We see from the proof of Theorem 
\ref{thm-ex} that there exists a minimizer 
$\phi_{\omega} \in X$ 
of $d(\omega)$, which is 
positive, even and decreasing 
in $r = |x|$. 
By Theorem \ref{thm:2.2}, we see that 
$\phi_{\omega} \in \mathcal{G}_{\omega}$. 
This completes the proof. 
\end{proof}

\section{Positivity and evenness of ground state} 
\label{sec-po-gs}
In this section, we show that the
ground states are positive up to phase and
even functions. 

\begin{lemma}\label{prop-reg}
Assume that (V1). Then $\mathcal{A}_\omega\subset C^{1}(\R) 
\cap C^{3}(\R \setminus \{0\})$.   
\end{lemma}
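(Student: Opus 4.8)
The plan is to run a standard elliptic bootstrap on the weak equation \eqref{sp1}, which reads $-\phi_{xx} = f(x,\phi) := -V(x)\phi - \omega\phi + |\phi|^{p-1}\phi$ in the distributional sense. First I would recall that any $u\in\mathcal{A}_\omega$ lies in $X\hookrightarrow H^1(\R)$ by \eqref{eq1-1}, so by the one-dimensional Sobolev embedding $u\in C(\R)\cap L^\infty(\R)$; in particular $u$ is bounded. Writing the equation as $-u_{xx} = g$ with $g := -(V+\omega)u + |u|^{p-1}u$, the term $|u|^{p-1}u$ is continuous and bounded, while $Vu^2\in L^1(\R)$ together with $u\in L^\infty$ gives $Vu\in L^1_{\mathrm{loc}}(\R)$ (indeed $Vu\in L^1(\R)$ on the set where $u\ne 0$, and trivially elsewhere). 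Hence $u_{xx}=-g\in L^1_{\mathrm{loc}}(\R)$, so $u_x$ is (locally) absolutely continuous and $u\in W^{2,1}_{\mathrm{loc}}(\R)\subset C^1(\R)$. This already gives $\mathcal{A}_\omega\subset C^1(\R)$.

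Next I would upgrade the regularity away from the origin. On any interval $I\Subset\R\setminus\{0\}$ we have $V\in C^1(I)$ by (V1), and $u\in C^1(I)$ from the previous step, so $g = -(V+\omega)u + |u|^{p-1}u\in C(I)$ — note $t\mapsto |t|^{p-1}t$ is $C^1$ on $\R$ for $p>1$ (its derivative $p|t|^{p-1}$ is continuous), hence the composition with $u\in C^1$ is $C^1$, and $(V+\omega)u\in C^1(I)$ as well. Thus $u_{xx}=-g\in C^1(I)$, which gives $u\in C^3(I)$; since $I$ was an arbitrary compact subinterval of $\R\setminus\{0\}$, we conclude $u\in C^3(\R\setminus\{0\})$. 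Combining the two parts yields $\mathcal{A}_\omega\subset C^1(\R)\cap C^3(\R\setminus\{0\})$, as claimed.

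The only genuinely delicate point is the behavior at $x=0$: since $V$ is only assumed $L^1_{\mathrm{loc}}$ near the origin and $C^1$ merely on $\R\setminus\{0\}$, one cannot expect better than $C^1$ regularity there, and the argument must be careful to use only the $L^1_{\mathrm{loc}}$ information of $V$ on a neighborhood of $0$ (so that $Vu\in L^1_{\mathrm{loc}}$ and the integrated equation $u_x(x) = u_x(x_0) - \int_{x_0}^x g(s)\,ds$ makes sense and produces a continuous $u_x$ across $0$). I would also double-check that $u\in X$ really forces $Vu\in L^1_{\mathrm{loc}}$: on a fixed interval $[-R,R]$ one has $\int_{-R}^R |V||u|\,dx \le \int_{\{|u|\le 1\}} |V|\,dx + \int_{\{|u|>1\}} |V||u|^2\,dx \le \|V\|_{L^1([-R,R])} + \int_\R |V|u^2\,dx<\infty$, using (V1) and the definition of $X$. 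Everything else is routine.
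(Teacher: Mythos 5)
Your proposal is correct. The paper itself disposes of this lemma in one line by citing \cite[Theorem~11.7 (i) and (vi)]{LiebLoss2001}, whereas you supply the underlying elliptic bootstrap explicitly; the two routes are morally the same (the cited theorem is proved by exactly this kind of argument), but yours has the merit of being self-contained and of isolating precisely where each hypothesis of (V1) is used: $V\in L^1_{\mathrm{loc}}$ near the origin gives only $u_{xx}\in L^1_{\mathrm{loc}}$ and hence $C^1$ across $x=0$, while $V\in C^1(\R\setminus\{0\})$ drives the upgrade to $C^3$ away from the origin. Your verification that $Vu\in L^1_{\mathrm{loc}}$ via the split over $\{|u|\le 1\}$ and $\{|u|>1\}$ is the right way to exploit $V|u|^2\in L^1$, and the observation that one only needs $t\mapsto|t|^{p-1}t$ to be $C^1$ (not $C^2$) to reach $u\in C^3$ is exactly what makes the argument work for all $p>1$. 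Two small points worth making explicit if you write this up: first, elements of $\mathcal{A}_\omega$ are complex-valued, so you should note that $z\mapsto|z|^{p-1}z$ is $C^1$ as a map $\R^2\to\R^2$ for $p>1$ (its differential is $O(|z|^{p-1})$ near the origin), after which the bootstrap is unchanged; second, the distributional equation on $\R$ is obtained by testing $S_\omega'(u)=0$ against $C_0^\infty(\R)\subset X$, which is legitimate precisely because $V\in L^1_{\mathrm{loc}}$ makes such functions lie in $X$.
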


\begin{proof}
The assertion follows from \cite[Theorem~11.7 (i) and (vi)]{LiebLoss2001}. 
\end{proof}

\begin{theorem}\label{thm:3.1}
Assume that (V1). 
Let $u\in\mathcal{M}_\omega$. There exists $\theta\in\R$ such that $e^{i\theta}u$ 
is a positive function.
\end{theorem}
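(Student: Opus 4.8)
The plan is to show that any minimizer $u \in \M_\omega$ can be replaced, without loss of generality, by $|u|$ (up to a constant phase), and then invoke the strong maximum principle to upgrade nonnegativity to strict positivity. First I would write $u = u_1 + i u_2$ with $u_1, u_2$ real, and recall that for any real-valued $f \in H^1(\R)$ one has $|\,|f|\,'| = |f'|$ a.e., so that $\||u|\,'\|_{L^2} \le \|u_1'\|_{L^2}^{1/2}\cdots$ — more precisely, for the complex-valued $u$ the diamagnetic-type inequality $|\nabla |u|| \le |\nabla u|$ a.e. holds pointwise (this is elementary in one dimension from $|u|' = \Re(\bar u u')/|u|$ on $\{u\ne 0\}$). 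Since $V \ge$ is not assumed nonnegative, I should be careful: the key point is only that $|u|$ has the same $L^2$ and $L^{p+1}$ norms as $u$ and that $V|u|^2 = V||u||^2$ pointwise, so $\||u|\|_{X_\omega} \le \|u\|_{X_\omega}$ with the potential term unchanged and the gradient term not increased. Hence $Q_\omega(|u|) \le Q_\omega(u) = d(\omega)$ and $N(|u|) = N(u) = d(\omega)$, so by Lemma~\ref{lem:2.2}, $|u|$ is itself a minimizer; moreover equality in the diamagnetic inequality forces $u$ and its derivative to be (a.e.) parallel as complex vectors, which on the connected set $\R$ means $u = e^{i\theta}|u|$ for a fixed $\theta \in \R$.

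Next I would record that this new minimizer $v := |u| = e^{-i\theta} u$ is real-valued, nonnegative, and, by Theorem~\ref{thm:2.2}, lies in $\Go$, hence solves the elliptic equation \eqref{sp1}; by Lemma~\ref{prop-reg} it is in $C^1(\R) \cap C^3(\R\setminus\{0\})$, and in particular $v$ is a classical (distributional across $0$) solution of
\[
    -v_{xx} + (V(x) + \omega) v = v^{p} \ge 0 \quad \text{on } \R\setminus\{0\},
\]
with $v \ge 0$ and $v \not\equiv 0$. To conclude $v > 0$ everywhere I would apply the strong maximum principle. Away from $x = 0$ the coefficient $V + \omega$ is continuous (by (V1), $V \in C^1(\R\setminus\{0\})$), so on any interval not containing $0$ where $v$ vanishes at an interior point, the strong maximum principle for $-v_{xx} + c(x) v \ge 0$ with $v \ge 0$ forces $v \equiv 0$ on that interval, and then a continuation/connectedness argument (ODE uniqueness: $v(x_0) = v'(x_0) = 0$ propagates) gives $v \equiv 0$ on all of $\R\setminus\{0\}$, a contradiction. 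The only delicate point is the behavior at $x = 0$, where $V$ may be singular (e.g. $V(x) = -|x|^{-\theta}$).

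The main obstacle, therefore, is handling $x = 0$: one must rule out $v(0) = 0$ and patch the two half-lines together. Since $v \in C^1(\R)$ by Lemma~\ref{prop-reg}, if $v(0) = 0$ then also — because $v \ge 0$ attains its minimum at $0$ — we get $v'(0) = 0$. I would then argue as follows: pick any interval $(0, \delta)$; on it $v$ solves the ODE with locally integrable coefficient $V + \omega \in L^1_{\mathrm{loc}}$, and the integral form $v(x) = v'(\delta^-)(x-\delta) + \int_x^\delta (s-x)\big[(V(s)+\omega)v(s) - v(s)^p\big]\,ds$ together with $v(0)=v'(0)=0$ and a Grönwall-type estimate (using $V|v|^2 \in L^1$, so $(V+\omega)v \in L^1_{\mathrm{loc}}$ near $0$) forces $v \equiv 0$ on $[0,\delta]$, hence on $[0,\infty)$ by the classical strong maximum principle / unique continuation on $(0,\infty)$, and symmetrically on $(-\infty,0]$; this contradicts $v \not\equiv 0$. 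Therefore $v(0) > 0$, and combined with the strong maximum principle on each half-line we conclude $v = e^{i\theta}u > 0$ on all of $\R$, which is the assertion. (Alternatively, one may sidestep $x=0$ entirely by first establishing in Section~\ref{sec-po-gs} that $v$ is even and decreasing in $r = |x|$ — as the rearrangement construction in the proof of Theorem~\ref{thm-ex} already suggests — so that $v(0) = \max v > 0$ automatically; I would mention this as the cleaner route if evenness is proved in parallel.)
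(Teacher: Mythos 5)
Your proposal is essentially correct and reaches the conclusion by a genuinely different route from the paper. You pass to $|u|$ via the diamagnetic inequality, use Lemma~\ref{lem:2.2} to see that $|u|\in\M_\omega$, and then try to recover the constant phase from the equality case of the diamagnetic inequality. The paper instead fixes $\theta$ with $e^{i\theta}u(0)\ge 0$ and replaces $v=e^{i\theta}u$ by $|\Re v|+i|\Im v|$, which has the same modulus and the same a.e.\ gradient modulus, hence is again a minimizer and therefore a solution of \eqref{sp1}; its real and imaginary parts then each solve the \emph{linear} ODE $-w''+(\omega+V-|u|^{p-1})w=0$, the imaginary part is killed by the Cauchy data $w(0)=w'(0)=0$, and the real part is shown to be strictly positive by the same ODE-uniqueness argument. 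The paper's trick buys two things: it never needs the (somewhat delicate) equality-case analysis of the diamagnetic inequality, and reality and positivity of the minimizer come out of one and the same Cauchy-uniqueness argument. Your treatment of the singular point $x=0$ (Gr\"onwall for the integral equation with $V\in L^1_{\mathrm{loc}}$, using $v\in C^1(\R)$ from Lemma~\ref{prop-reg} to get $v(0)=v'(0)=0$) is exactly the mechanism the paper invokes when it appeals to ``uniqueness of the solution of the initial value problem,'' so that part is fine.

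One point needs reordering. As written, you deduce $u=e^{i\theta}|u|$ from equality in the diamagnetic inequality ``on the connected set $\R$'' \emph{before} you know that $u$ has no zeros. Equality only tells you that the phase of $u$ is locally constant on the open set $\{u\ne 0\}$; if $u$ vanished somewhere, that set could have several components carrying different phases (e.g.\ $u=+|u|$ on one side of a zero and $u=-|u|$ on the other satisfies equality everywhere), so connectedness of $\R$ alone is not enough. The fix is already contained in your own argument: first conclude from Lemma~\ref{lem:2.2} and Theorem~\ref{thm:2.2} that $|u|\in\M_\omega=\Go$ solves \eqref{sp1}, run your maximum-principle/Gr\"onwall argument to get $|u|>0$ on all of $\R$, and only then invoke the equality case on $\{u\ne 0\}=\R$ to obtain a single global phase.
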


\begin{proof}
Take $\theta\in\R$ such that $e^{i\theta}u(0)\ge 0$, and put $v:=e^{i\theta}u$.

First, we show the positivity of $v$. We put $v_1:=\lvert \Re v \rvert$ and $v_2:=\lvert \Im v \rvert$. Then we have $|v_1+iv_2|=|u|$ and $|\nabla(v_1+iv_2)|=|\nabla u|$, and so $v_1+iv_2\in \mathcal{M}_\omega$. Since $\mathcal{M}_\omega\subset\mathcal{A}_\omega$, $v_1+iv_2$ solves \eqref{sp1}. In, particular, $v_j$ solves 
\[  -v_j''+(\omega+V+|u|^{p-1})v_j=0,\quad 
    j=1, 2. \]
Moreover, Lemma~\ref{prop-reg} implies $v_j\in C^1(\R)\cap C^3(\R\setminus\{0\})$. Since $v_2(0)=v_2'(0)=0$, where $v_2'(0)=0$ follows from $v_2\ge 0$ and $v_2\in C^1(\R)$, the uniqueness of the solution of the initial value problem for the ODE implies $v_2\equiv 0$. Moreover, if $v_1(x_0)=0$ for some $x_0\in\R$, since $v_1\ge 0$ and $v_1\in C^1(\R)$, we have $v_1'(x_0)=0$, which implies $v_1\equiv 0$. This contradicts to $u\not\equiv0$. Thus $v_1>0$ on $\R$. Moreover, since $|v|=v_1>0$, $v(0)>0$, and $v$ is continuous, we see that $v>0$ on $\R$. 
\end{proof}

\begin{lemma} \label{lem:u=u^*}
Assume (V1). 
If a positive function $u\in X$ satisfies 
\begin{align*}
   &(u^*)'(r)<0\quad\text{for all }r>0, &
   &\|u_x\|_{L^2}=\|(u^*)_x\|_{L^2},&
   &\int_{\R} V|u|^2 dx =\int_{\R^d} V|u^*|^2 dx,
\end{align*}
where $u^{*}$ is the Schwarz rearrangement of 
the function $u$, then $u=u^*$. 
\end{lemma}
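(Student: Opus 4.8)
The plan is to split the argument into two parts. First, the equality $\|u_{x}\|_{L^{2}}=\|(u^{*})_{x}\|_{L^{2}}$ in the Pol\'ya--Szeg\H{o} inequality will force $u$ to be a translate of $u^{*}$; second, the equality $\int_{\R}V|u|^{2}\,dx=\int_{\R}V|u^{*}|^{2}\,dx$, together with (V1) and the strict radial decrease of $u^{*}$, will force that translate to be trivial, whence $u=u^{*}$.

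For the first part I would invoke the equality case of the Pol\'ya--Szeg\H{o} inequality (Brothers--Ziemer). Since a positive strictly decreasing function cannot vanish, the hypothesis $(u^{*})'(r)<0$ for all $r>0$ gives $u^{*}>0$ on all of $\R$ and $(u^{*})'\ne0$ on $\R\setminus\{0\}$; in particular the set $\{(u^{*})'=0\}$ meets $\{0<u^{*}<\max u^{*}\}$ in a null set (it is in fact empty), so the equality case applies and produces $x_{0}\in\R$ with $u(x)=u^{*}(x-x_{0})$ for a.e. $x$. For completeness this step also admits a self-contained one-dimensional proof: via the coarea formula the distribution function $\mu(t)=|\{u>t\}|$ satisfies $-\mu'(t)=\sum_{u(x)=t}|u_{x}(x)|^{-1}$ for a.e. $t$, so Cauchy--Schwarz gives $\sum_{u(x)=t}|u_{x}(x)|\ge\bigl(\#\,u^{-1}(t)\bigr)^{2}/(-\mu'(t))$; since each positive level of $u^{*}$ is attained exactly twice, integrating in $t$ and comparing with $\|(u^{*})_{x}\|_{L^{2}}^{2}$ shows that equality forces $\#\,u^{-1}(t)=2$ and $|u_{x}|$ constant on $u^{-1}(t)$ for a.e. $t$. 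The first condition makes $u$ unimodal with a single maximum at some point $c$; rewriting the second through the generalized inverses of the two monotone branches of $u$ about $c$ shows those branches coincide, i.e. $u=u^{*}(\cdot-c)$.

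It remains to prove $x_{0}=0$. Put $w:=(u^{*})^{2}$, which by hypothesis is even, strictly decreasing in $|x|$, and (being the square of a positive strictly decreasing function) strictly positive on $\R$; the equality in \eqref{eq:Vreaineq} then reads $h(x_{0})=h(0)$ with $h(a):=\int_{\R}V(x+a)w(x)\,dx$, a function that is even because $V$ and $w$ are. Using the layer-cake representation $w(x)=\int_{0}^{w(0)}\mathbf 1_{\{|x|<\rho(s)\}}\,ds$ --- where $s\mapsto\rho(s)$ is a continuous decreasing bijection of $(0,w(0))$ onto $(0,\infty)$, the surjectivity onto all of $(0,\infty)$ being exactly where $u^{*}>0$ everywhere enters --- together with Fubini (legitimate because $Vw\in L^{1}$ and $V(\cdot+x_{0})w\in L^{1}$, as $u,u^{*}\in X$), I reduce to
\[
    h(x_{0})-h(0)=\int_{0}^{w(0)}I\bigl(\rho(s),x_{0}\bigr)\,ds,\qquad
    I(\rho,x_{0}):=\int_{-\rho}^{\rho}\bigl(V(x+x_{0})-V(x)\bigr)\,dx .
\]
Since $V\in L^{1}_{\mathrm{loc}}(\R)$ one has $I(\rho,x_{0})=\int_{0}^{x_{0}}\bigl(V(t+\rho)-V(|t-\rho|)\bigr)\,dt$, and because $t+\rho>|t-\rho|$ for $t,\rho>0$ while $V$ is non-decreasing in $|x|$, the integrand is non-negative; hence $I(\rho,x_{0})\ge0$ for every $\rho>0$. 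Moreover $I(\cdot,x_{0})$ cannot vanish identically: if it did, then for a.e. $t\in(0,x_{0})$ we would get $V(r+2t)=V(r)$ for a.e. $r>0$, so $V$ restricted to $(0,\infty)$ would have two incommensurable periods and thus be a.e. constant there; being continuous on $\R\setminus\{0\}$ it would then be constant on $(0,\infty)$, contradicting $\p_{x}V\not\equiv0$. Therefore $I(\rho_{0},x_{0})>0$ for some $\rho_{0}$, hence, by continuity of $I$ in $\rho$, on a whole neighbourhood of $\rho_{0}$; since $\rho(\cdot)$ is onto $(0,\infty)$ this yields $h(x_{0})-h(0)>0$ for every $x_{0}\ne0$. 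Combined with $h(x_{0})=h(0)$ this forces $x_{0}=0$, i.e. $u=u^{*}$.

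I expect the main difficulty to lie in the first part: the equality case of Pol\'ya--Szeg\H{o} must be handled with the coarea formula and a careful analysis of the level sets of $u$ (or simply quoted from Brothers--Ziemer), and one has to check that the hypothesis $(u^{*})'<0$ on $(0,\infty)$ is precisely what excludes the degenerate situations in which equality fails to imply being a translate. The second part is elementary but requires some care because the admissible potentials may be unbounded or singular at the origin (e.g. $V(x)=|x|^{2}$ or $V(x)=-|x|^{-\theta}$), which is why it is carried out entirely in terms of the $L^{1}_{\mathrm{loc}}$ function $V$ and the layer-cake slices, with no pointwise evaluation or differentiation of $V$.
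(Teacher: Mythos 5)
Your proof is correct and follows essentially the same route as the paper: the equality case of the P\'olya--Szeg\H{o} inequality (Brothers--Ziemer type, which the paper quotes from Burchard--Ferone) yields $u = u^*(\cdot - x_0)$, and the layer-cake decomposition of $|u^*|^2$ combined with the radial monotonicity of $V$ rules out $x_0 \ne 0$. The only cosmetic difference is in how strictness of the comparison $\int_{B_\rho(0)} V \le \int_{B_\rho(x_0)} V$ is upgraded to a strict inequality somewhere: you argue by a periodicity contradiction, while the paper uses $V'(r_0)>0$ at some $r_0$ directly; both are valid.
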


\begin{proof}
Since $(u^*)'<0$, we have 
$\{x\in\R\colon 0<u^*(x)<\|u^*\|_{L^\infty},\: |\nabla u^*(x)|=0\}=\emptyset$.
Therefore, by applying \cite[Theorem~1.1]{BurcFero15} together with 
$\|u_x\|_{L^2}=\|(u^*)_x\|_{L^2}$, we obtain $u=u^*(\cdot-y)$ for some $y\in\R$.

Suppose $y\ne 0$ and derive a contradiction. By the strict monotonicity of $u^*$, we see that 
$
\{x\in\R\colon u^*(x)^2>s\}=B_{R(s)}(0)
$
for some continuous and strictly decreasing function $R(s)$ with $R(s)\to\infty$ as $s\downarrow 0$, where $B_R(y)$ denotes the open ball centered at $y$ with radius $R$.  
By the monotonicity of $V$, we have 
$\int_{B_{R(s)}(0)}V(x)\,dx \le \int_{B_{R(s)}(y)}V(x)\,dx$ for all $s>0$. Moreover, since $V'(r_0)>0$ for some $r_0>0$, we can take $s_0>0$ sufficiently small so that $r_0\in B_{R(s_0)}(0)$. Then it is clear that
$
\int_{B_{R(s_0)}(0)}V(x)\,dx < \int_{B_{R(s_0)}(y)}V(x)\,dx.
$
Thus, noting that $s\mapsto \int_{B_{R(s)}(0)}V(x)\,dx$ and $s\mapsto\int_{B_{R(s)}(y)}V(x)\,dx$ are continuous because $V\in L_{\mathrm{loc}}^1(\R)$, we obtain
\begin{align*}
    \int_{\R} V(x) |u^*|^2 dx
   &= \int_0^\infty \int_{B_{R(s)}(0)}V(x)\,dx\,ds
    < \int_0^\infty \int_{B_{R(s)}(y)}V(x)\,dx\,ds
\\ &= \int_{\R} V(x) u^*(x-y)^2\,dx
    = \int_{\R} V(x)|u|^2\,dx,
\end{align*}
which contradicts $\int_{\R} V(x)|u^*|^2\,dx=\int_{\R} V(x)|u|^2\,dx$. Therefore, $y=0$.  
This completes the proof.
\end{proof}

\begin{theorem}\label{prop-ev}
Assume that (V1). 
Let $u\in\mathcal{M}_\omega$ be a positive ground state. Then $u$ is even and strictly decreasing. 
\end{theorem}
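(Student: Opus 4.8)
The plan is to compare the positive ground state $u$ with its Schwarz rearrangement $u^{*}$, show that $u^{*}$ is itself a strictly radially decreasing minimizer for $d(\omega)$, and then invoke Lemma~\ref{lem:u=u^*} to conclude $u=u^{*}$. For the first step I would record the three standard rearrangement facts $\|u^{*}\|_{L^{q}}=\|u\|_{L^{q}}$ for all $q\ge1$, $\|(u^{*})_{x}\|_{L^{2}}\le\|u_{x}\|_{L^{2}}$, and — by the monotonicity of $V$, exactly as in the derivation of \eqref{eq:Vreaineq} — $\int_{\R}V|u^{*}|^{2}\le\int_{\R}V|u|^{2}$. Hence $\|u^{*}\|_{X_{\omega}}\le\|u\|_{X_{\omega}}$, so $Q_{\omega}(u^{*})\le Q_{\omega}(u)=d(\omega)$, while $N(u^{*})=N(u)=d(\omega)$ since $N$ depends only on $\|\cdot\|_{L^{p+1}}$. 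By Lemma~\ref{lem:2.2}, $u^{*}$ is a minimizer, so $u^{*}\in\mathcal{M}_{\omega}$ and $Q_{\omega}(u^{*})=d(\omega)=Q_{\omega}(u)$; since $\|(u^{*})_{x}\|_{L^{2}}\le\|u_{x}\|_{L^{2}}$ and $\int_{\R}V|u^{*}|^{2}\le\int_{\R}V|u|^{2}$ with $\|u^{*}\|_{L^{2}}=\|u\|_{L^{2}}$, equality of the $X_{\omega}$-norms forces both $\|(u^{*})_{x}\|_{L^{2}}=\|u_{x}\|_{L^{2}}$ and $\int_{\R}V|u^{*}|^{2}\,dx=\int_{\R}V|u|^{2}\,dx$, i.e.\ the last two hypotheses of Lemma~\ref{lem:u=u^*}.

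It remains to prove that $w:=u^{*}$ is strictly decreasing in $r=|x|$, which is the remaining hypothesis. By construction $w\ge0$ is even and non-increasing in $r$; by Theorem~\ref{thm:2.2} it lies in $\mathcal{G}_{\omega}\subset\mathcal{A}_{\omega}$, so it solves \eqref{sp1}; since $u>0$ a.e.\ with $u\to0$ at infinity, the superlevel sets $\{w>t\}$ are balls whose radii tend to $\infty$ as $t\downarrow0$, so in fact $w>0$ on all of $\R$; and $w\in C^{1}(\R)\cap C^{3}(\R\setminus\{0\})$ by Lemma~\ref{prop-reg}. Thus $-w''+(V+\omega)w=w^{p}$ on $(0,\infty)$. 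I would then use the ODE energy
\[
    \mathcal{E}(r):=\tfrac12(w'(r))^{2}-\tfrac12\bigl(V(r)+\omega\bigr)w(r)^{2}+\tfrac1{p+1}w(r)^{p+1},
\]
which satisfies $\mathcal{E}'(r)=-\tfrac12 V'(r)w(r)^{2}\le0$ on $(0,\infty)$ because $V$ is non-decreasing. Since $w\in H^{1}(\R)$ gives $w(r)\to0$ and $V|w|^{2},w^{2}\in L^{1}(\R)$ give a sequence $r_{n}\to\infty$ along which $(V(r_{n})+\omega)w(r_{n})^{2}\to0$, one obtains $\lim_{r\to\infty}\mathcal{E}(r)\ge0$, hence $\mathcal{E}(r)\ge0$ for all $r>0$. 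If now $w'(r_{0})=0$ for some $r_{0}>0$, then $\mathcal{E}(r_{0})\ge0$ gives $w(r_{0})^{p-1}\ge\tfrac{p+1}{2}(V(r_{0})+\omega)$, and substituting into $w''(r_{0})=\bigl(V(r_{0})+\omega-w(r_{0})^{p-1}\bigr)w(r_{0})$ yields $w''(r_{0})<0$ (when $V(r_{0})+\omega>0$ use $\tfrac{p+1}{2}>1$; when $V(r_{0})+\omega\le0$ it is immediate). But $w'(r_{0})=0$ together with $w''(r_{0})<0$ forces $w'>0$ immediately to the left of $r_{0}$, contradicting $w'\le0$ on $(0,\infty)$. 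Hence $w'\ne0$, and so $w'<0$, throughout $(0,\infty)$.

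With all hypotheses verified, Lemma~\ref{lem:u=u^*} applied to $u$ gives $u=u^{*}$, which is even and strictly decreasing in $r$, as asserted. I expect the strict-monotonicity step to be the main obstacle: because $V$ may be unbounded both near the origin (e.g.\ $V(x)=-|x|^{-\theta}$) and at infinity (e.g.\ $V(x)=|x|^{2}$), one cannot control the sign of $w''$ at a critical point by a pointwise bound on $V$, nor by a higher-order Taylor expansion since the available regularity is only $C^{3}$; the energy identity above is the device that makes the argument robust, provided one carefully extracts the sign of $\mathcal{E}$ at infinity from the integrability $V|w|^{2}\in L^{1}(\R)$ and the decay of $w$ rather than from any size condition on $V$.
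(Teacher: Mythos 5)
Your proposal is correct and follows essentially the same route as the paper: show via Lemma~\ref{lem:2.2} that $u^{*}\in\mathcal{M}_{\omega}$ with equality of the gradient and potential terms, prove $(u^{*})'<0$ on $(0,\infty)$ by the monotone ODE energy $e(r)=\tfrac12(u^{*})'(r)^{2}-\tfrac12(\omega+V(r))u^{*}(r)^{2}+\tfrac1{p+1}u^{*}(r)^{p+1}$ whose nonnegativity forces $(u^{*})''(r_{0})<0$ at any critical point, and conclude $u=u^{*}$ from Lemma~\ref{lem:u=u^*}. The only differences are cosmetic (your extra case split on the sign of $V(r_{0})+\omega$ is subsumed by the paper's single inequality).
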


\begin{proof}
First, by $\|(u^*)_x\|_{L^2}\le \|u_x\|_{L^2}$ and \eqref{eq:Vreaineq}, we have $Q_\omega(u^*) \le Q_\omega(u)=d(\omega)=N(u)=N(u^*)$, which, by Lemma~\ref{lem:2.2}, implies $u^*\in\mathcal{M}_\omega$. In particular, $u^*\in\mathcal{A}_\omega$, and moreover $\|u_x\|_{L^2}=\|(u^*)_x\|_{L^2}$ and 
$\int_{\R} V|u|^2 dx =\int_{\R} V|u^*|^2 dx$.

Next, we show that $(u^*)'<0$ on $(0,\infty)$. Clearly $(u^*)'\le 0$. Suppose $(u^*)'(r_0)=0$ for some $r_0>0$. Then $(u^*)''(r_0)\ge 0$ and $u^*(r_0)>0$. We put $e(r):=\tfrac12 (u^*)'(r)^2-\tfrac12(\omega+V(r))u^*(r)^2+\tfrac1{p+1}u^*(r)^{p+1}$. Since $u\in X$, we have $\int_0^\infty e(r)\,dr<\infty$. Moreover, by \eqref{sp1}, $e'(r)=-\tfrac12 V'(r)(u^*)^2\le 0$. Thus, $\lim_{r\to\infty}e(r)=0$ and $e(r)\ge 0$ for all $r\ge 0$. In particular, $\tfrac12(\omega+V(r_0))u^*(r_0)^2\le \tfrac1{p+1}u^*(r_0)^{p+1}$. Therefore, 
\[
(u^*)''(r_0)=(\omega+V(r_0))u^*(r_0)-u^*(r_0)^{p}\le -\frac{p-1}{p+1}u^*(r_0)^{p}<0,
\]
which contradicts $(u^*)''(r_0)\ge 0$. Hence $(u^*)'<0$ on $(0,\infty)$.

Therefore, by Lemma~\ref{lem:u=u^*}, we obtain $u=u^*$ and $u'(r)<0$ on $(0,\infty)$. This completes the proof.
\end{proof}

\section{Uniqueness of ground state} 
\label{sec-unique}
This section is devoted to the following: 
\begin{theorem}\label{thm-unique}
 Assume that (V1). 
 Then the positive solution $\phi_{\omega} 
\in X$ to \eqref{sp1} is unique. 
\end{theorem}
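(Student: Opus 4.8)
The plan is to treat \eqref{sp1} as a one-dimensional ODE and to exploit the monotonicity of $V$ in (V1) through a phase-plane comparison. Fix a positive solution $u\in X$. By \eqref{eq1-1} and Lemma~\ref{prop-reg}, $u\in C^{1}(\R)\cap C^{3}(\R\setminus\{0\})$ and $u(x)\to0$ as $|x|\to\infty$; since $\omega>\omega_{1}$ forces $\omega+V(x)$ to be bounded below by a positive constant for $|x|$ large (as follows from \eqref{eq1-2}), the comparison-function estimates for \eqref{sp1} also give $\p_{x}u(x)\to0$ as $|x|\to\infty$. As in the proof of Theorem~\ref{prop-ev}, set
\[
e(x):=\tfrac12\p_{x}u(x)^{2}-\tfrac12\bigl(\omega+V(x)\bigr)u(x)^{2}+\tfrac1{p+1}u(x)^{p+1},
\]
so that $e'(x)=-\tfrac12V'(x)u(x)^{2}$. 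By (V1), $e$ is non-decreasing on $(-\infty,0)$, non-increasing on $(0,\infty)$, and $e(x)\to0$ at $\pm\infty$, hence $e\ge0$ on $\R$; as in Theorem~\ref{prop-ev}, a critical point $x_{1}$ of $u$ with $u''(x_{1})\ge0$ would force $e(x_{1})<0$, so $u$ has exactly one critical point $x_{0}$ and is strictly unimodal.

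Next I would show $x_{0}=0$, so that $u$ is even and $\p_{x}u<0$ on $(0,\infty)$. This can be obtained either by the one-dimensional moving-plane method — which applies because the $r$-derivative of the reaction term $-(V(r)+\omega)a+a^{p}$ equals $-V'(r)a\le0$ by (V1), and the reflection procedure can be started from $-\infty$ since $u\to0$ and $\omega+V>0$ near infinity — or by the phase-plane comparison below, applied to the two half-profiles $v(x):=u(x_{0}+x)$ and $w(x):=u(x_{0}-x)$ on $[0,\infty)$, which solve \eqref{sp1} with the potentials $V(x_{0}+x)$ and $V(|x_{0}-x|)$ and satisfy $V(x_{0}+x)\ge V(|x_{0}-x|)$, strictly on a positive-measure set when $x_{0}\ne0$ by (V1). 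It then remains to prove that an even, positive, strictly decreasing $H^{1}$-solution is unique; since at least one exists by Theorems~\ref{thm-ex}, \ref{thm:3.1} and \ref{prop-ev}, it is enough to rule out two such solutions $u,v$ with $u(0)>v(0)$.

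For this, on the decreasing branch $(0,\infty)$ I would use the height $a=u(x)$ as the independent variable: with $x=x(a)$ the inverse and $\rho(a):=\p_{x}u(x(a))<0$, equation \eqref{sp1} becomes $\tfrac12\tfrac{d}{da}\bigl(\rho(a)^{2}\bigr)=\bigl(\omega+V(x(a))\bigr)a-a^{p}$, and likewise for $v$. If $u$ and $v$ never meet on $(0,\infty)$, then $u>v$ there, so $u\,\p_{x}v-v\,\p_{x}u$ is strictly increasing on $(0,\infty)$ while vanishing both at $x=0$ and, by the decay above, at $+\infty$ — impossible; hence they first meet at some $x_{1}>0$, where $\p_{x}u(x_{1})\le\p_{x}v(x_{1})<0$, so $u$ is the steeper profile. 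After checking that $u$ and $v$ do not meet again (a second crossing would make $v$ steeper and produce the same contradiction with the roles reversed), one has $x_{u}(a)\le x_{v}(a)$ at every common height $a\in(0,u(x_{1}))$, hence $V(x_{u}(a))\le V(x_{v}(a))$ by (V1); integrating the displayed identity from $u(x_{1})$ down to $a$ then gives $\rho_{u}(a)^{2}\ge\rho_{v}(a)^{2}+\bigl(\p_{x}u(x_{1})^{2}-\p_{x}v(x_{1})^{2}\bigr)$. If $\p_{x}u(x_{1})^{2}>\p_{x}v(x_{1})^{2}$, this keeps $|\p_{x}u|$ bounded away from $0$ as $u\to0$, so $u$ reaches the value $0$ at a finite point, contradicting $u>0$; therefore $\p_{x}u(x_{1})=\p_{x}v(x_{1})$, and uniqueness of the Cauchy problem for \eqref{sp1} at $x_{1}$ gives $u\equiv v$, contradicting $u(0)>v(0)$.

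The hard part is the last step: the phase-plane comparison hinges on understanding how the arrival position $x(a)$ depends on the height $a$ for each profile — this is exactly what couples the comparison to the sign of $V'$ — and one must rule out multiple crossings and handle the configurations where $V'$ vanishes on intervals, so as still to extract the strict inequality that forces a profile to reach $0$ in finite $x$; here the hypothesis $\p_{x}V\not\equiv0$ of (V1) is indispensable. A secondary technical point, absent in the classical autonomous case, is the decay $\p_{x}u\to0$ at infinity for unbounded potentials such as $V(x)=|x|^{2}$.
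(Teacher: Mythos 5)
Your plan is correct in its essentials but follows a genuinely different route from the paper. The paper reduces \eqref{sp1} to the half-line problem \eqref{rsp1} and invokes the generalized Pohozaev-identity machinery of \cite{MR4191499}: it introduces the auxiliary functions $a,b,c,G$ and the quantity $J(r;\phi)$, checks $G=-\tfrac12 V'\le 0$, $G\not\equiv 0$, deduces $J(r;\phi)\ge 0$ from the decay of $J$ at infinity, and then cites \cite[Theorem~3.3]{MR4191499} for the actual uniqueness step. You instead run a direct Peletier--Serrin-type phase-plane comparison: invert each strictly decreasing branch, write $\tfrac12\tfrac{d}{da}\bigl(\rho(a)^2\bigr)=(\omega+V(x(a)))a-a^{p}$, and compare two profiles at equal heights, using the monotonicity of $V$ through $x_u(a)\le x_v(a)\Rightarrow V(x_u(a))\le V(x_v(a))$. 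What your approach buys is self-containedness and transparency (the only external inputs are Cauchy uniqueness for the ODE and the finite-time-vanishing dichotomy), at the price of more case analysis; the paper's approach hides the comparison inside the cited theorem and reduces the work to verifying hypotheses (I)--(IV), which is essentially bookkeeping. Both proofs use (V1) at exactly one decisive point: for you the inequality $V(x_u(a))\le V(x_v(a))$, for the paper the sign condition $G=-\tfrac12V'\le0$. (Incidentally, your remark that $\p_xV\not\equiv0$ is ``indispensable'' for the comparison is not quite right: when $V'\equiv0$ on the relevant range the difference $\rho_u^2-\rho_v^2$ is simply constant and your dichotomy still applies; that hypothesis is needed elsewhere in the paper, not here.)

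The one step you defer --- ruling out a second crossing --- in fact closes with the same monotone quantity rather than a new idea: between consecutive crossing heights the ordering of the profiles fixes the sign of $\tfrac{d}{da}\bigl(\rho_u^2-\rho_v^2\bigr)=2\bigl(V(x_u(a))-V(x_v(a))\bigr)a$, while the directions of the two crossings force $\rho_u^2-\rho_v^2\ge0$ at one endpoint and $\le0$ at the other; hence it vanishes at a crossing, and Cauchy uniqueness gives $u\equiv v$ already there. So either the profiles coincide or they cross exactly once, and your final dichotomy applies at that unique crossing. The remaining loose ends you flag are genuinely minor: $\p_xu\to0$ at infinity follows since $u''=(\omega+V)u-u^{p}>0$ for large $x$ makes $u'$ eventually monotone while $u'\in L^{2}$; and the reduction to even, decreasing profiles is needed by the paper as well, which at that point only cites Theorem~\ref{prop-ev} (stated for minimizers), so you are not worse off there.
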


Let $\phi_{\omega} \in X$ 
be a positive solution 
to \eqref{sp1}. 
Then we see from Theorem  
\ref{prop-ev} that $\phi_{\omega}$ 
becomes even 
and $\phi_{\omega}'(x) < 0$ for all $x > 0$. 
Thus, it suffices to consider 
even positive solutions to \eqref{sp1}.   
Let $r = |x|$. 
This together with Lemma~\ref{prop-reg}
implies that 
\eqref{sp1} is reduced to the following: 
\begin{equation} \label{rsp1}
    \left\{\begin{alignedat}{2}
   &\phi'' - (V(r) + \omega) \phi + \phi^{p} = 0 &\quad 
   &\mbox{on }(0, \infty),  
\\ & \phi'(0) = 0 & 
    \end{alignedat}\right.
\end{equation}
where the prime mark denotes the differentiation 
with respect to $r$. 
We will adapt the argument of 
\cite[Theorem 3.3]{MR4191499}
who proved the uniqueness of positive 
solution to semilinear elliptic equations 
in one dimension by using the argument of \cite{ShioWata13, ShioWata16}. 
To this end, 
we consider the general ordinary differential equation: 
\begin{equation}\label{gsp}
    \phi''+ 
    \frac{f'(r)}{f(r)} \phi'
    - g(r) \phi + h(r) \phi^{p} 
    = 0. 
\end{equation}
We put 
\[  \begin{split}
   & a(r) := f(r)^{2(p+1)/(p+3)} h(r)^{-2/(p+3)}, \\
   & b(r) := - \frac{1}{2} a^{\prime}(r) + \frac{f^{\prime}(r)}{f(r)} 
   a(r), \\
   & c(r) := - b^{\prime}(r) + \frac{f^{\prime}(r)}{f(r)} 
   b(r), \\
   & G(r) := b(r) g(r) + \frac{1}{2} c^{\prime}(r) 
   -  \frac{1}{2} (a g)^{\prime}(r). 
    \end{split} \]

Let $\phi_{\omega}$ be a positive solution 
to \eqref{gsp}. 
We put 
    \[
    \begin{split}
    J(r;\phi) 
    := 
    \frac{1}{2} a(r) (\phi'(r))^2 
    + b(r) \phi'(r) \phi(r) 
    + \frac{1}{2} c(r) \phi^2(r) 
    - \frac{1}{2} a(r) g(r) \phi^2(r) 
    + \frac{1}{p + 1} a(r) h(r) 
    \phi^{p + 1}(r). 
    \end{split}
    \]
Then it is known that 
the following Pohozaev identity holds: 

\begin{proposition}\label{prop-poho}
Let $\phi$ be a positive solution to \eqref{gsp}. Then we have 
\begin{equation}
    \frac{d}{d r} J(r; \phi) 
    = G(r) \phi^{2}(r)
\end{equation}
for all $r > 0$. 
\end{proposition}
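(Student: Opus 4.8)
The plan is to verify the Pohozaev-type identity by direct differentiation of $J(r;\phi)$, substituting the equation \eqref{gsp} wherever a second derivative $\phi''$ appears, and then recognizing that the remaining terms organize themselves into $G(r)\phi^2(r)$ after using the definitions of $a$, $b$, $c$, and $G$. First I would compute
\[
\frac{d}{dr}J(r;\phi) = \tfrac12 a'(\phi')^2 + a\phi'\phi'' + b'\phi'\phi + b(\phi')^2 + b\phi\phi'' + \tfrac12 c'\phi^2 + c\phi\phi' - \tfrac12 (ag)'\phi^2 - (ag)\phi\phi' + \tfrac1{p+1}(ah)'\phi^{p+1} + ah\phi^p\phi'.
\]
Then I would use \eqref{gsp} in the form $\phi'' = -\tfrac{f'}{f}\phi' + g\phi - h\phi^p$ to eliminate $\phi''$ from the two terms $a\phi'\phi''$ and $b\phi\phi''$.

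Next I would collect terms by type. The $\phi^p\phi'$ terms: from $a\phi'\phi''$ we get $-ah\phi^p\phi'$, which cancels the explicit $+ah\phi^p\phi'$; from $b\phi\phi''$ we get $-bh\phi^{p+1}$, which should combine with $\tfrac1{p+1}(ah)'\phi^{p+1}$. For the coefficient of $\phi^{p+1}$ to vanish we need $(ah)' = (p+1)bh$; I would check this is exactly equivalent to the definition $b = -\tfrac12 a' + \tfrac{f'}{f}a$ together with the specific choice $a = f^{2(p+1)/(p+3)}h^{-2/(p+3)}$ — this is precisely the algebraic reason for that exponent. For the $(\phi')^2$ terms: I collect $\tfrac12 a' + b - a\tfrac{f'}{f}$, and by the definition of $b$ this equals $\tfrac12 a' + (-\tfrac12 a' + \tfrac{f'}{f}a) - \tfrac{f'}{f}a = 0$. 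For the $\phi\phi'$ terms: I collect $b' - b\tfrac{f'}{f} + c - ag + (\text{from }b\phi\phi''\text{: } -b\tfrac{f'}{f}\phi\phi')$ — wait, I must be careful: the $b\phi\phi''$ substitution contributes $-b\tfrac{f'}{f}\phi\phi' + bg\phi^2 - bh\phi^{p+1}$. So the $\phi\phi'$ coefficient is $b' - b\tfrac{f'}{f} + c - ag$, and by the definition $c = -b' + \tfrac{f'}{f}b$ this becomes $-c + c - ag = -ag$; hmm, that does not obviously vanish, so I would need to double-check by also tracking the $ag\phi\phi'$ term already present. Actually the explicit term $-(ag)\phi\phi'$ is there, so I'd recheck whether the $c\phi\phi'$ term plus $b'\phi\phi' - b\tfrac{f'}{f}\phi\phi'$ cancels, leaving the remaining $\phi^2$ terms as the only survivors.

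What remains are the $\phi^2$ terms: $\tfrac12 c' - \tfrac12(ag)' + bg$ (the last from the $b\phi\phi''$ substitution), which is exactly the definition of $G(r)$. The main obstacle — though it is really bookkeeping rather than a genuine difficulty — is keeping the substitution from $\phi''$ consistent across all term types simultaneously, since the single replacement of $\phi''$ feeds into three different coefficient groups at once; a sign error there propagates everywhere. The one genuinely structural point to verify carefully is the identity $(ah)' = (p+1)bh$, which pins down why the exponent $2(p+1)/(p+3)$ in the definition of $a$ is forced: differentiating $a = f^{2(p+1)/(p+3)}h^{-2/(p+3)}$ gives $a'/a = \tfrac{2(p+1)}{p+3}\tfrac{f'}{f} - \tfrac{2}{p+3}\tfrac{h'}{h}$, so $-\tfrac12 a' + \tfrac{f'}{f}a = a\bigl(-\tfrac{p+1}{p+3}\tfrac{f'}{f} + \tfrac{1}{p+3}\tfrac{h'}{h} + \tfrac{f'}{f}\bigr) = a\bigl(\tfrac{2}{p+3}\tfrac{f'}{f} + \tfrac{1}{p+3}\tfrac{h'}{h}\bigr)$, and one checks $(p+1)\cdot\tfrac1h\cdot$ this equals $(ah)'/h$. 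Since the statement merely cites that the identity is known, I would present the computation compactly, noting these cancellations, and refer to \cite{MR4191499, ShioWata13, ShioWata16} for the analogous derivation.
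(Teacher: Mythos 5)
Your direct-differentiation approach is correct, and in fact the paper offers no proof of this proposition at all --- it states the identity as ``known'' and leaves the verification to the references \cite{MR4191499, ShioWata13, ShioWata16} --- so your computation is the natural (and essentially only) argument. Your check that $(ah)'=(p+1)bh$ (which is indeed what forces the exponent $2(p+1)/(p+3)$ in the definition of $a$) and your cancellation of the $(\phi')^2$ coefficient are both right. The one loose end you flag, the coefficient of $\phi\phi'$, does close: when you substitute $\phi''=-\tfrac{f'}{f}\phi'+g\phi-h\phi^p$ into the term $a\phi'\phi''$, you get not only $-a\tfrac{f'}{f}(\phi')^2$ and $-ah\phi^p\phi'$ but also $+ag\,\phi\phi'$, and it is this contribution that cancels the explicit $-ag\,\phi\phi'$ coming from differentiating $-\tfrac12 ag\phi^2$. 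The full $\phi\phi'$ coefficient is thus $ag+\bigl(b'-\tfrac{f'}{f}b\bigr)+c-ag=ag-c+c-ag=0$, using $c=-b'+\tfrac{f'}{f}b$. With that settled, the only surviving terms are the $\phi^2$ terms with coefficient $bg+\tfrac12 c'-\tfrac12(ag)'=G(r)$, which is exactly the claimed identity; your write-up needs only this one correction to be complete.
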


We now modify the result of \cite[Theorem 3.3]{MR4191499}. 
To state the result, we prepare the following:  
\[  \begin{split}
  & U_{1}(r) := \frac{1}{f(r)} \int_{0}^{r} f(\tau) 
  (|g(\tau)| + h(\tau)) d \tau, \\
  & U_{2}(r) := \frac{1}{f(r)} \int_{0}^{r} f(\tau) h(\tau) d\tau, \\
  & D(r) := b(r)^{2} - a(r) (c(r) - a(r) g(r)). 
  \end{split}   \]
Then we impose the following conditions: 
\begin{enumerate}
\renewcommand{\labelenumi}{(\Roman{enumi})}
\item $f, g \in C^{3}(0, \infty)$ are positive functions and $g \in C^{1}(0, \infty)$. 
\item There exists $R > 0$ such that 
\begin{enumerate}
\renewcommand{\labelenumi}{(\alph{enumii})}
\item $f(|g| + h) \in L^{1}(0, R)$, 
\item $\tau \to f(\tau) (|g(\tau)| + h(\tau)) \int_{\tau}^{R} f^{-1}(\sigma) d \sigma \in L^{1}(0, R)$. 
\end{enumerate}
\item $\lim_{r \to 0} a(r) U_1(r) U_2(r) = \lim_{r \to 0} b(r)U_2(r) = 0$. 
\item $G\le 0$ and $G\not\equiv 0$. 
\end{enumerate}
Then the following theorem holds: 
\begin{theorem}\label{thm-unique-1}
Let $p > 1$ and 
assume that 
conditions (I)--(IV).
Then the problem \eqref{gsp} has at 
most one positive solution $\phi$ 
which satisfies $J(r;\phi) \to 0$ as $r \to \infty$ and $\phi'(0) = 0$. 
\end{theorem}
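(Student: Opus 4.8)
The plan is to exploit the Pohozaev-type functional $J(r;\phi)$ and its monotonicity from Proposition~\ref{prop-poho}, together with a standard argument comparing two hypothetical positive solutions via the quantity $D(r)$. First I would suppose, for contradiction, that $\phi_1$ and $\phi_2$ are two distinct positive solutions of \eqref{gsp} with $\phi_j'(0) = 0$ and $J(r;\phi_j) \to 0$ as $r \to \infty$. The key algebraic observation is that for a single solution $\phi$, the function $J(r;\phi)$ can be written as a quadratic form in $(\phi'(r), \phi(r))$ with discriminant governed by $D(r) = b^2 - a(c - ag)$; when $D(r) < 0$ the quadratic form $\frac12 a (\phi')^2 + b\phi'\phi + \frac12(c-ag)\phi^2$ is sign-definite, so the term $\frac1{p+1}a h \phi^{p+1}$ controls the sign of $J$.

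The heart of the proof is the following dichotomy. Using (IV), $J(r;\phi)$ is non-increasing in $r$ (strictly somewhere), and since $J(r;\phi)\to 0$ at infinity we get $J(r;\phi) > 0$ for all $r$, with the boundary behavior at $r = 0$ handled by conditions (II) and (III): the integrability hypotheses (IIa)--(IIb) guarantee, via the variation-of-constants representation $\phi(r) = \phi(0) + f(r)^{-1}\int_0^r f(\tau)(\cdots)\,d\tau$ and the bounds encoded in $U_1, U_2$, that $\phi'(r)\phi(r)$, $\phi(r)^2$, etc. behave well enough that $\lim_{r\to 0}J(r;\phi)$ exists and equals a specific nonnegative expression; (III) ensures the cross terms $a U_1 U_2$ and $b U_2$ vanish. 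Then I would form the Wronskian-type comparison: set $w := \phi_1'\phi_2 - \phi_2'\phi_1$ (or work with the ratio $\phi_1/\phi_2$) and show, using both equations, that the sign of $w$ is controlled, while simultaneously the positivity of $J$ forces a contradiction with the assumed distinctness. Concretely, following \cite[Theorem~3.3]{MR4191499}, after normalizing so that $\phi_1$ and $\phi_2$ agree suitably at one point, the function $r \mapsto J(r;\phi_1) - (\text{scaled } J(r;\phi_2))$ is both eventually zero (from the decay) and strictly monotone (from $G \le 0$, $G\not\equiv 0$), which is impossible unless $\phi_1 \equiv \phi_2$.

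The main obstacle I anticipate is the behavior at the singular endpoint $r = 0$: because $f'/f$ may blow up (this is the term $V'$ in the application, and $V$ is only $C^1$ away from $0$), one cannot simply evaluate $J(0;\phi)$ naively, and must justify carefully that the limit exists and is nonnegative. This is exactly what conditions (II) and (III) are designed to control, so the bulk of the work is verifying that the representation formulas for $\phi$ near $0$ combined with (IIa), (IIb), (III) yield the needed estimates on $a(\phi')^2$, $b\phi'\phi$, and $(c - ag)\phi^2$ as $r\to 0$. The remaining steps — the monotonicity of $J$ from Proposition~\ref{prop-poho} and (IV), and the final contradiction argument — are then essentially the same as in \cite{MR4191499, ShioWata13, ShioWata16}, so I would cite those and reproduce only the parts that differ because of the weaker regularity of $f$ at the origin.
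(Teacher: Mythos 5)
Your proposal matches the paper's proof in its essential point: the only new input needed is that $J(r;\phi)\ge 0$ for every admissible positive solution, which follows from Proposition~\ref{prop-poho} and (IV) (so that $J$ is non-increasing) together with $J(r;\phi)\to 0$ as $r\to\infty$, after which both you and the authors defer the comparison argument to \cite[Theorem~3.3]{MR4191499} and the Shioji--Watanabe framework. The paper's proof is exactly this two-line reduction, so your (more elaborated) sketch of the endpoint analysis at $r=0$ and the Wronskian comparison is just an expansion of the cited material rather than a different route.
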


\begin{proof}
By Proposition~\ref{prop-poho} and (IV), for any positive solution $\phi$ to \eqref{gsp} satisfying $J(r;\phi)\to 0$ as $r\to\infty$, we have $J(r;\phi_{\omega}) \geq 0$ for all $r>0$. Hence, the assertion follows from the same argument as in the proof of \cite[Theorem~3.3]{MR4191499}.
\end{proof}

\begin{remark}
In our setting, \cite[Theorem~3.3]{MR4191499} cannot be applied directly, since the assumption \cite[(IV)]{MR4191499} may fail. These assumptions were used there to prove the positivity of $J(r;\phi)$. However, under our stronger assumption~(IV), the positivity of $J(r;\phi)$ follows immediately, so \cite[(IV)]{MR4191499} is not required.
\end{remark}

\begin{proof}[Proof of Theorem \ref{thm-unique}]
First, we can easily find that 
\eqref{rsp1} coincides \eqref{gsp} with 
  \begin{equation} \label{c1}
  f \equiv 1, \qquad 
  g(r) = V(r) + \omega, \qquad 
  h \equiv 1. 
  \end{equation}
Thus, we find that \eqref{rsp1} corresponds to 
  \begin{equation} \label{c2}
  a \equiv 1, \qquad \qquad 
  b \equiv 0, \qquad \qquad c \equiv 0 
  \end{equation}
and 
  \begin{equation} \label{c4}
  G(r) = - \frac{1}{2} V^{\prime}(r). 
  \end{equation}
Furthermore, we see that 
\eqref{rsp1} corresponds to 
  \begin{equation} \label{c3}
  \begin{split}
  & U_{1}(r) 
  = \frac{1}{f(r)} \int_{0}^{r} f(\tau) 
  (|g(\tau)| + h(\tau)) d \tau
  = \int_{0}^{r} (|V(\tau) + \omega| + 1) d\tau, \\
  & U_{2}(r) = 
  \frac{1}{f(r)} \int_{0}^{r} f(\tau) h(\tau) d\tau = r.
  \end{split}
    \end{equation}

It follows from (V1) and \eqref{c1} that $f, h \in C^{3}(0, \infty)$ are positive functions and $g \in C^{1}(0, \infty)$.   Thus, condition (I) holds. 

Next, we will check that condition (II) is satisfied. 
  It follows from (V1) and \eqref{c1} that 
   \[
   f(|g| + h) = (|V(r) + \omega| + 1) \in L^{1}(0, R). 
   \] 
In addition, by (V1) and \eqref{c1}, one 
has
    \[
    \tau \mapsto f(\tau) (|g(\tau)| + h(\tau)) 
    \int_{\tau}^{R} f(\sigma)^{-1} d\sigma
    = (|V(\tau) + \omega| + 1) (R - \tau) \in L^{1}(0, R). 
    \] 
Thus, we can verify condition (II). 

 By \eqref{c1}, \eqref{c2}, \eqref{c3}, and (V1), we obtain 
\[  \lim_{r \to 0} a(r) U_{1}(r) U_{2}(r) 
    = \lim_{r \to 0} r\int_{0}^{r} (|V(\tau) + \omega| + 1) d\tau 
    = 0, \qquad 
    \lim_{r \to 0} b(r) U_{2} (r) = \lim_{r \to 0} 0 = 0. \]
Therefore, we see that condition (III) is satisfied. 
 
By \eqref{c4} and (V1), we have (IV). 

Finally, we show that any positive solution $\phi\in X$ of \eqref{rsp1} satisfy $J(r; \phi)\to0$ as $r\to\infty$. we see that 
\begin{equation} \label{c5}
    J(r;\phi)
    = \frac{1}{2} \phi'(r)^{2} 
    - \frac{g (r)}{2} \phi(r) ^{2}
    + \frac{1}{p+1} \phi(r)^{p+1}. 
\end{equation}
Since $\phi\in X$, we have $J(r; \phi_\omega)\in L^1(0, \infty)$. This means that there exists $(r_n)_n$ such that $r_n\to\infty$ and $J(r_n; \phi) \to 0$ as $n\to \infty$. Since $J$ is decreasing from Proposition~\ref{prop-poho} and (IV) we obtain $J(r; \phi) \to 0$ as $r\to \infty$.
Therefore, the assertion follows from Theorem~\ref{thm-unique-1}.
\end{proof}

We conclude this section with the proof of Theorem \ref{thm:1.1} \textrm{(ii)}. 
\begin{proof}[Proof of Theorem \ref{thm:1.1} \textrm{(ii)}]
Let $u \in \mathcal{G}_{\omega}$. 
Then by Theorems \ref{thm:2.2} and 
\ref{thm:3.1}, there exists 
$\theta \in \R$ such that 
$e^{i \theta} u$ is a positive solution to \eqref{sp1}. 
Since we see from Theorem \ref{thm-unique} that 
the positive solution to \eqref{sp1} 
is unique, we have 
$e^{i \theta} u = \phi_{\omega}$, 
where $\phi_{\omega}$ is the one given in 
Theorem \ref{thm:1.1} \textrm{(i)}. 
This completes the proof. 
\end{proof}

\section{Non-degeneracy of the ground state}
\label{sec-nond}

\subsection{Non-degenracy of the 
ground state in $X_{\text{rad}}$}
Throughout this subsection, we consider
only the real-valued function. 
\begin{theorem}\label{thm-nond}
Assume that (V1). Let $\phi_{\omega} 
\in X$ be the positive 
ground state of \eqref{sp1}. 
Then $\phi_{\omega} \in X$ 
is non-degenerate in $X_{\text{rad}}$. 
\end{theorem}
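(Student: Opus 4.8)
The plan is to read off that, on real‑valued functions, $S_\omega''(\phi_\omega)$ is the Schr\"odinger operator
\[ L_+:=-\partial_x^2+V+\omega-p\phi_\omega^{p-1}, \]
so that non‑degeneracy in $X_{\text{rad}}$ amounts to: the only even $v\in H^1(\R)$ with $L_+v=0$ is $v\equiv0$. Such a $v$ is $C^1$ by Lemma~\ref{prop-reg} (applied to $L_+$), satisfies $v'(0)=0$ by evenness, and $v\equiv0$ as soon as $v(0)=0$ by uniqueness for the initial value problem (valid since $V\in L^1_{\mathrm{loc}}$); so I may assume $v(0)>0$. Two structural facts will drive the argument. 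First, since $S_\omega'(\phi_\omega)=0$ one has $L_+\phi_\omega=-(p-1)\phi_\omega^{p}<0$, and since $\sigma_{\mathrm{ess}}(L_+)=\sigma_{\mathrm{ess}}(-\partial_x^2+V+\omega)\subset[\omega-\omega_1,\infty)\subset(0,\infty)$, the operator $L_+$ has a negative, simple ground‑state eigenvalue $\nu_\omega<0$ with an everywhere positive (even) eigenfunction; consequently a nodeless $L^2$‑solution of $L_+v=0$ would have to be that eigenfunction, forcing $\nu_\omega=0$ — impossible. Hence $v$ must change sign on $(0,\infty)$. Second, since $V$ is non‑decreasing in $r=|x|$ (assumption (V1)) and $\phi_\omega$ is strictly decreasing (Theorem~\ref{prop-ev}), the function $w:=-\phi_\omega'$ is strictly positive on $(0,\infty)$, tends to $0$ at infinity, and is a \emph{supersolution}: differentiating \eqref{sp1} gives $L_+w=V'\phi_\omega\ge0$.

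Next I would locate the interval on which to compare $v$ with $w$. Because $q_+:=V+\omega-p\phi_\omega^{p-1}\to V(\infty)+\omega>0$ (positivity of $V(\infty)+\omega$ being forced by the very existence of the decaying solution $\phi_\omega$), one can fix $\rho'>0$ with $q_+>0$ on $(\rho',\infty)$; on that half‑line any one‑signed solution $v>0$ of $L_+v=0$ is convex ($v''=q_+v>0$), hence cannot vanish twice, so $v$ has a largest zero $\rho>0$, and on $(\rho,\infty)$ we may take $v>0$ with $v(\rho)=0$ and $v'(\rho)>0$ (again by ODE uniqueness; enlarge $\rho'$ so $\rho'>\rho$). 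On $[\rho,\infty)$ consider the Wronskian $W:=v'w-vw'$. Using $v''=q_+v$ and $w''=q_+w-V'\phi_\omega$ one computes
\[ W'(r)=v(r)V'(r)\phi_\omega(r)\ge 0,\qquad W(\rho)=v'(\rho)\,w(\rho)>0, \]
so $W\ge W(\rho)>0$ on $[\rho,\infty)$. Since $(w/v)'=-W/v^2$, this gives on $(\rho',\infty)$
\[ 0<\frac{w(r)}{v(r)}\le\frac{w(\rho')}{v(\rho')}-W(\rho)\int_{\rho'}^{r}\frac{ds}{v(s)^{2}}, \]
whence $\int_{\rho'}^\infty v(s)^{-2}\,ds<\infty$. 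On the other hand $v$ is convex on $(\rho',\infty)$ and tends to $0$, hence it is decreasing there, so $v(s)^{-2}\ge v(\rho')^{-2}>0$ and $\int_{\rho'}^\infty v(s)^{-2}\,ds=\infty$ — a contradiction. Therefore $\ker L_+\cap X_{\text{rad}}=\{0\}$, which is the assertion.

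The main obstacle, and the only place where (V1) is used in an essential way, is producing a positive supersolution of $L_+$ on $(\rho,\infty)$ that does \emph{not} vanish at the left endpoint: this is exactly $w=-\phi_\omega'$, and it is a supersolution precisely because $V'\ge 0$. (If one instead tried $\phi_\omega$ itself, it is positive at $\rho$ but is a \emph{sub}solution, which goes the wrong way.) A secondary technical point — the possibly singular behaviour of $V$, hence of $\phi_\omega''$, at the origin (e.g.\ $V(x)=-|x|^{-\theta}$) — is sidestepped because the decisive Wronskian estimate is carried out on $(\rho,\infty)$ with $\rho>0$, away from the singularity, while near $0$ only the $C^1$‑regularity and ODE uniqueness of Lemma~\ref{prop-reg} are needed. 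One can also package the first half of the argument variationally: the Nehari minimization among even functions shows $\langle L_+w,w\rangle\ge 0$ whenever $w$ is even with $\int\phi_\omega^{p}w=0$, so $L_+|_{X_{\text{rad}}}$ has exactly one negative eigenvalue; the content of the theorem is that $0$ is not an eigenvalue, and this is supplied by the Sturm/Wronskian step above — presumably also where the Morse‑type auxiliary result of Appendix~\ref{sec-Morse} enters.
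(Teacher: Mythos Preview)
Your argument is correct, and it takes a genuinely different route from the paper's.

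The paper proceeds by a Morse--index perturbation: it introduces a modified action $S_{\omega,\delta}$ by adding a compactly supported bump $\delta\gamma$ (located where $V'>0$) to both the potential and the nonlinearity, observes that $\phi_\omega$ remains the unique positive solution of the perturbed equation, and shows (Appendix~\ref{sec-Morse}, via the Nehari constraint) that $S_{\omega,\delta}''(\phi_\omega)$ has Morse index exactly one. If there were a nontrivial radial $\psi_\omega$ with $L_+\psi_\omega=0$, then $\operatorname{span}\{\phi_\omega,\psi_\omega\}$ would furnish a two--dimensional negative subspace for $S_{\omega,\delta}''(\phi_\omega)$, a contradiction. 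You instead run a direct Sturm--Wronskian comparison on the tail: the key device is that $w=-\phi_\omega'$ is a strictly positive \emph{supersolution} of $L_+$ on $(0,\infty)$ (precisely because $L_+w=V'\phi_\omega\ge0$ under (V1)), which, combined with the convexity of $v$ in the region where $V+\omega-p\phi_\omega^{p-1}>0$, forces $\int v^{-2}=\infty$ and $\int v^{-2}<\infty$ simultaneously. Your approach is more elementary and self--contained --- it needs neither the perturbed functional nor the auxiliary Morse--index computation of Appendix~\ref{sec-Morse}, nor the uniqueness machinery of Section~\ref{sec-unique} --- and it pinpoints exactly where the monotonicity of $V$ is used. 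The paper's approach, in turn, sits inside the Shioji--Watanabe framework that handles uniqueness and non--degeneracy uniformly, and transfers more readily to situations in which no explicit positive supersolution like $-\phi_\omega'$ is available. One minor remark: once you normalize $v(0)>0$, the sign of $v$ on $(\rho,\infty)$ is determined and need not be positive; the Wronskian step should be run with $-v$ in that case, which changes nothing in the contradiction.
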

\begin{proof}[Proof of Theorem \ref{thm-nond}]
We still employ the notations \eqref{c1}, \eqref{c2}, and \eqref{c4} in Section \ref{sec-unique}. 
It follows from condition (V1) that 
$G = - \frac{1}{2} V^{\prime} \not\equiv 0$ in $(0, \infty)$.  
This implies that 
we can find a closed interval $[r_{1}, r_{2}] \subset (0, \infty)$ such that 
    \[
    \min_{r \in [r_{1}, r_{2}]} |G(r)| > 0. 
    \]
We choose $\gamma \in C_{0}^{\infty} (0, \infty) \setminus \{0\}$ such that 
$\gamma \geq 0$ and $\operatorname{supp}\gamma = [r_{1}, r_{2}]$. 
We take $\delta > 0$ and fix it later. 
We define 
    \[
    \begin{split}
    & 
    g_{\delta}(r) := g(r) + \delta \gamma(r) h(r) \phi_{\omega}^{p-1}(r) 
    = V(r) + \omega + \delta \gamma(r) \times 1 \times \phi_{\omega}^{p-1}(r) 
    = V(r) + \omega + \delta \gamma(r) \phi_{\omega}^{p-1}(r), \\ 
    & 
    h_{\delta}(r) 
    := (1 + \delta \gamma(r)) h(r) 
    =1 + \delta \gamma(r)
    \end{split}
    \]
in $(0, \infty)$. 
We define $a_{\delta}, b_{\delta}, c_{\delta}, G_{\delta}$ and $D_{\delta}$ 
in $(0, \infty)$ as follows: 
    \[
    \begin{split}
    & a_{\delta} := f^{\frac{2(p+1)}{p + 3}} h_{\delta}^{- \frac{2}{p+3}} 
    = h_{\delta}^{- \frac{2}{p+3}}, 
    \qquad
    b_{\delta} := - \frac{1}{2} a_{\delta}' 
    + \frac{f'}{f} a_{\delta} 
    = - \frac{1}{2} a_{\delta}', \\
    & 
    c_{\delta} := - b_{\delta}' + \frac{f'}{f} b_{\delta} 
    = - b_{\delta}', 
    \qquad 
    G_{\delta} := b_{\delta} g_{\delta} + \frac{1}{2} c_{\delta, r} 
    - \frac{1}{2} (a_{\delta} g_{\delta})', 
    \\
    & 
    D_{\delta} := b_{\delta}^{2} - a_{\delta} (c_{\delta} - a_{\delta} g_{\delta}). 
    \end{split}
    \]
Since $g_{\delta} = g = V + \omega$ and $h_{\delta} = h = 1$ in 
$(0, \infty) \setminus [r_{1}, r_{2}]$, we can easily see that 
    \[
    a_{\delta} = a = 1, \qquad 
    b_{\delta} = b = 0, \qquad 
    c_{\delta} = c = 0, \qquad 
    G_{\delta} = - \frac{1}{2} a g = G, 
    \qquad 
    D_{\delta} = 
    g = D 
    \]
in $(0, \infty) \setminus [r_{1}, r_{2}]$. 
We fix $\delta > 0$ small enough such that 
    \[
    \min_{r \in [r_{1}, r_{2}]} |G_{\delta}(r)| > 0. 
    \]
We see that 
    \begin{equation} \label{eq-nond5}
    G_{\delta}(r) \leq 0 \qquad 
    \mbox{in $(0, \infty)$}. 
    \end{equation}
Indeed, if $r \not\in [r_{1}, r_{2}]$, 
we have by condition (V1) that 
$G_{\delta} (r) = G(r) = - \frac{1}{2} V^{\prime}(r) \leq 0$. 
In addition, since $\min_{r \in [r_{1}, r_{2}]} |G_{\delta}(r)| > 0$ and $G(r) \leq 0$ 
in $(0, \infty)$, we find that \eqref{eq-nond5} holds. 
Note that $\phi_{\omega}$ satisfies 
    \[
    \begin{split}
    \phi_{\omega}^{\prime \prime} - g_{\delta}(r) \phi_{\omega} 
    + h_{\delta}(r) \phi_{\omega}^{p}
    & 
    = \phi_{\omega}^{\prime \prime} - (V(r) + \omega) \phi_{\omega} 
    - \delta \gamma(r) \phi_{\omega}^{p} 
    +  \phi_{\omega}^{p} + \delta \gamma(r) \phi_{\omega}^{p} \\
    & 
    = \phi_{\omega}^{\prime \prime} - (V(r) + \omega) \phi_{\omega} 
    +  \phi_{\omega}^{p} = 0. 
   \end{split}
    \]
 $\phi_{\omega}(0) \in (0, \infty)$ and $\lim_{r \to \infty} \phi_{\omega}(r) = 0$. 
Namely, $\phi_{\omega}$ is the positive solution to 
    \begin{equation}\label{eq-nod6}
    \begin{cases}
     \phi_{\omega}^{\prime \prime} - g_{\delta}(r) \phi_{\omega} 
    + h_{\delta}(r) \phi_{\omega}^{p} = 0, \qquad 0< r < \infty, \\
    \lim_{r \to \infty} \phi_{\omega}(r) = 0. 
    \end{cases}
    \end{equation}
We can find that positive solution to \eqref{eq-nod6} is unique for sufficiently small 
$\delta > 0$ from 
Section \ref{sec-unique} and \eqref{eq-nond5}. 

We shall show that $\phi_{\omega}$ is non-degenerate in $X_{\text{rad}}$. 
Suppose to the contrary that $\phi_{\omega}$ is degenerate in $X_{\text{rad}}$. 
Then there exists $\psi_{\omega} \in X_{\text{rad}} \setminus \{0\}$ such that 
    \begin{equation} \label{eq-nod7}
    \psi_{\omega}^{\prime \prime} - g(r) 
    \psi_{\omega} + p \phi_{\omega}^{p-1} 
    \psi_{\omega} = 0 
    \qquad \mbox{for $0 < r < \infty$}. 
    \end{equation}
We define $C^{2}$-functional $S_{\omega, \delta}$ by 
\[  S_{\omega, \delta}(u) 
    = 2\int_{0}^{\infty}
    \Bigl(\frac{1}{2} (|u^{\prime}|^{2} + g_{\delta}(r) |u|^{2}) 
    - \frac{1}{p + 1} h_{\delta} (r) |u|^{p + 1}\Bigr) dr 
    \quad 
    \mbox{for }u \in X. \]
We can easily see that 
\[  \langle S_{\omega, \delta}''(\phi_{\omega})v, v\rangle
    =\langle S_{\omega}''(\phi_{\omega})v, v\rangle  
    - 2 \delta (p-1) \int_{0}^{\infty} \gamma(r) \phi_{\omega}^{p-1}(r) v^{2}(r) dr \]
 for each $v \in X_{\text{rad}}$ and 
\[  \langle S_{\omega}''(\phi_{\omega})\phi_{\omega}, \phi_{\omega}\rangle 
    = - 2 (p-1) \int_{0}^{\infty}\phi_{\omega}^{p+1}(r) dr < 0. \]
Moreover, we have 
\begin{equation} \label{eq-nod8}
    \int_{0}^{\infty} \gamma(r) \phi_{\omega}^{p-1} (r) \psi_{\omega}^{2}(r) dr > 0. 
\end{equation}
Indeed, if not, we have
$\int_{0}^{\infty} \gamma(r) \phi_{\omega}^{p-1} (r) \psi_{\omega}^{2}(r) dr = 0$. 
Since $\phi_{\omega}(r) > 0$ for any $r> 0$, 
we see that $\psi_{\omega} \equiv 0$ on $\operatorname{supp}\gamma$. 
From the uniqueness of the initial value problem of \eqref{eq-nod7}, 
we see that $\psi_{\omega} \equiv 0$ in $(0, \infty)$, which contradicts 
$\psi_{\omega} \in X_{\text{rad}} \setminus \{0\}$. 
Thus, by \eqref{eq-nod8}, we obtain 
\begin{equation} \label{eq-nod9}
    \begin{split}
    \langle S_{\omega, \delta}''(\phi_{\omega})\alpha \phi_{\omega} + \beta \psi_{\omega}, \alpha \phi_{\omega} + \beta \psi_{\omega}\rangle 
   ={}&\mathopen{} - 2 (p-1) \int_{0}^{\infty}\phi_{\omega}^{p+1}(r) dr \alpha^{2} 
\\ &- 2 \delta (p-1) \int_{0}^{\infty} \gamma(r) \phi_{\omega}^{p+1}(r) dr \alpha^{2} 
\\ &- 2 \delta (p-1) \int_{0}^{\infty} 
    \gamma(r)
    \phi_{\omega}^{p-1}(r) 
    \psi_{\omega}^{2}(r) dr \beta^{2} 
\\ & - 4 \delta (p-1) 
    \int_{0}^{\infty} \gamma(r) \phi_{\omega}^{p-1}(r) \phi_{\omega}(r) \psi_{\omega}(r) dr
    \alpha \beta
    < 0
    \end{split}
\end{equation}
for each $(\alpha, \beta) \in \R^{2} \setminus \{(0, 0)\}$. 
In addition, we can easily verify that 
$\phi_{\omega}$ does not satisfy 
\eqref{eq-nod7}, so that 
$\phi_{\omega}$ and 
$\psi_{\omega}$ 
are linearly independent. 
Thus, we see from \eqref{eq-nod9} that 
the Morse index of
$S_{\omega, \delta}^{\prime \prime
}(\phi_{\omega})$ 
is at least two.
However, since the Morse index of $S_{\omega, \delta}^{\prime \prime}(\phi_{\omega})$ is 
one (see Section \ref{sec-Morse} below), we obtain a contradiction. 
Hence, we have shown that $\phi_{\omega}$ is non-degenerate in $X_{\text{rad}}$. 
\end{proof}

\subsection{Proof of Theorem \ref{thm:1.1} 
\textrm{(iv)}}
Here, we consider not the real-valued 
functions but the complex-valued ones. 
We define the following operators 
$L_{\omega, +}$ and $L_{\omega, -}$ by 
\begin{align*}
    L_{\omega, +} 
   &= - \p_{xx} + V(x) 
    + \omega - p \phi_{\omega}^{p-1}, &
    L_{\omega, -} 
   &= - \p_{xx} + V(x) 
    + \omega - p \phi_{\omega}^{p-1}.
\end{align*}
For each $\varphi \in X$, we put 
$\varphi_{1} = \Re \varphi$ and 
$\varphi_{2} = \Im \varphi$. 
Then one has  
\[  S_{\omega}''(\phi_{\omega})\varphi 
    = L_{\omega, +} \varphi_{1} 
    + i L_{\omega, -} \varphi_{2}.  \]
In order to prove $\ker S_\omega''(\phi_{\omega})=\operatorname{span}
\{i\phi_\omega\}$, it suffices to show that 
$\ker L_{\omega, +} = \{0\}$ and 
$\ker L_{\omega, -} = \operatorname{span}
\{\phi_{\omega}\}$. 
To this end, we prepare the following lemma:
    \begin{lemma}\label{lem-sim}
       All eigenvalues of the 
       operators $L_{\omega, +}$ 
       and $L_{\omega, -}$ are simple. 
    \end{lemma}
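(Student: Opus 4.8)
The plan is to exploit the one-dimensional structure: $L_{\omega,\pm}$ are Schr\"odinger operators $-\partial_{xx}+W(x)$ on $\R$ with $W(x)=V(x)+\omega-p\phi_\omega^{p-1}(x)$ (resp.\ with $\phi_\omega^{p-1}$ in place of $p\phi_\omega^{p-1}$ for $L_{\omega,-}$), and in one dimension eigenvalues of such operators are automatically simple by the classical ODE uniqueness/Wronskian argument. The key point is that if $u_1,u_2$ are two eigenfunctions in $X\hookrightarrow H^1(\R)$ associated with the same eigenvalue $\lambda$, then both solve the same second-order linear ODE $-u''+W u=\lambda u$, and their Wronskian $w=u_1'u_2-u_1u_2'$ is constant. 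Since $u_1,u_2\in H^1(\R)$ decay at infinity (indeed $u_j, u_j'\to 0$ as $|x|\to\infty$, because an $H^1$ solution of such an ODE with $W$ bounded near infinity along with the equation forces $u_j''\in L^2$ hence $u_j'\to 0$), the Wronskian vanishes at $\pm\infty$, hence $w\equiv 0$, so $u_1$ and $u_2$ are linearly dependent. This shows every eigenvalue is at most one-dimensional, i.e.\ simple.

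More concretely, the steps I would carry out are: (1) record that by Lemma~\ref{prop-reg} (applied to the linear equation, or directly by elliptic/ODE regularity) any $X$-eigenfunction of $L_{\omega,\pm}$ lies in $C^2(\R)$ and solves the ODE classically away from $x=0$, and in fact is $C^1$ across $0$; (2) establish the decay $u(x),u'(x)\to 0$ as $|x|\to\infty$ for any eigenfunction $u\in X\subset H^1(\R)$ — from the eigenvalue equation, $u\in H^1$ and the local boundedness of $V$ give $u\in H^2_{\mathrm{loc}}$ with $u''=(W-\lambda)u$, and one concludes $u'\in L^2$, then a standard argument (e.g.\ $\frac{d}{dx}|u|^2, \frac{d}{dx}|u'|^2\in L^1$) yields $u,u'\to0$; (3) for two eigenfunctions $u_1,u_2$ with the same eigenvalue, compute $w':=(u_1'u_2-u_1u_2')'=u_1''u_2-u_1u_2''=(W-\lambda)u_1u_2-(W-\lambda)u_1u_2=0$ on $(0,\infty)$ and on $(-\infty,0)$, and note $w$ is continuous at $0$ since $u_j\in C^1$; (4) conclude $w\equiv\mathrm{const}$, and by the decay in step (2), $w\equiv0$, so $u_1,u_2$ are proportional; hence eigenvalues are simple.

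I expect the main obstacle to be the decay statement in step (2): one must be careful because $V$ is only assumed locally $L^1$ and may be unbounded (e.g.\ $V(x)=|x|^2$). For potentials growing at infinity the decay of $u$ and $u'$ is in fact easier (the operator has compact resolvent and eigenfunctions decay fast), while for bounded-below-but-possibly-negative singular potentials like $V(x)=-|x|^{-\theta}$ the singularity is only at the origin, where $C^1$-matching across $0$ was already handled in Lemma~\ref{prop-reg}; at infinity $V$ is then bounded, so a Gronwall/ODE-asymptotics argument gives the decay of $u'$ from that of $u\in H^1$. A clean way to package this uniformly is: since $u\in H^1(\R)$, $u\to0$ at $\pm\infty$; multiplying the equation by $u$ and integrating shows $\int(|u'|^2+W|u|^2)<\infty$, and since $V|u|^2\in L^1$ (as $u\in X$) we get $u'\in L^2$; then $u'u\to0$ along a sequence, and monotonicity of the Wronskian's primitive forces the limit to exist and be $0$. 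Alternatively, one may simply invoke that $L_{\omega,\pm}$ are self-adjoint Schr\"odinger operators on $\R$ whose $X$-eigenfunctions are the $L^2$-eigenfunctions, and cite the standard fact that one-dimensional Schr\"odinger operators have simple eigenvalues. I would state the decay lemma precisely and then give the short Wronskian argument as above.
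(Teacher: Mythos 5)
Your proposal is correct and coincides with the paper's own proof: both use the Wronskian argument, showing that the Wronskian of two eigenfunctions with the same eigenvalue is constant and vanishes at infinity, hence the eigenfunctions are linearly dependent. Your additional care about the decay of $u$ and $u'$ at infinity is a detail the paper glosses over, but the route is the same.
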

\begin{proof}
Let $\varphi_{\omega} \in X$ 
and $\psi_{\omega} \in X$ 
be the eigenfunctions of $L_{\omega, +}$ 
associated with an eigenvalue $\lambda$. 
Then we have 
    \[
    \p_{x}( \varphi_{\omega} \p_{x} \psi_{\omega} 
    - \p_{x} \varphi_{\omega}
    \psi_{\omega}) 
    = \varphi_{\omega} \p_{xx} \psi_{\omega} 
    - \p_{xx} \varphi_{\omega} \psi_{\omega} 
    = 0. 
    \]
This implies that there exists a constant 
$C$ such that 
$\varphi_{\omega} \p_{x} \psi_{\omega} 
    - \p_{x} \varphi_{\omega}
    \psi_{\omega} = C$ on $\R$. 
Since $\lim_{|x| \to \infty} 
(\varphi_{\omega} \p_{x} \psi_{\omega} 
    - \p_{x} \varphi_{\omega}
    \psi_{\omega}) = 0$, 
    one has $C = 0$. 
Thus, $\varphi_{\omega}$ 
and $\psi_{\omega}$ are linearly dependent. 
We can prove the simpleness of the eigenvalues 
of $L_{\omega, -}$ similarly. 
Thus, we obtained the desired result. 
\end{proof}

\begin{proof}[Proof of Theorem \ref{thm:1.1} 
\textrm{(iv)}]

We first consider the operator $L_{\omega, +}$. 
Suppose that $\varphi_{\omega} \in X$ satisfies 
$L_{\omega, +} \varphi_{\omega} = 0$. 
Then we decompose $\varphi_{\omega}$ into 
the even part and the odd part as 
    \[
    \varphi_{\omega} = 
    \varphi_{1, \omega} + \varphi_{2, \omega}, 
    \qquad 
    \varphi_{1, \omega} = 
    \frac{\varphi_{\omega}(x) + 
    \varphi_{\omega}(-x)}{2}, \qquad 
    \varphi_{2, \omega} = 
    \frac{\varphi_{\omega}(x)  
    - \varphi_{\omega}(-x)}{2}. 
    \]
Then we see that $\varphi_{1, \omega} 
\in X_{\text{rad}}$ and also satisfies
$L_{\omega, +} \varphi_{1, \omega} = 0$. 
By Theorem \ref{thm-nond}, we have 
$\varphi_{1, \omega} = 0$. 
Observe that $\varphi_{2, \omega}$ satisfies 
$\varphi_{2, \omega}(0) = 0$ and 
    \[
    - \varphi_{2, \omega}'' + 
    V(r) \varphi_{2, \omega} + \omega 
    \varphi_{2, \omega} - p \phi_{\omega}^{p-1}
    (r) \varphi_{2, \omega} = 0, 
    \qquad r > 0.
    \]
Then by the similar argument as in \cite[Proof of Theorem 4.4]{MR4191499}, we obtain 
$\varphi_{2, \omega} = 0$. 
Thus, we see that $\ker L_{\omega, +} = \{0\}$.

Next, we study the operator $L_{\omega, -}$. 
We can easily find that 
$L_{\omega, -} \phi_{\omega} = 0$. 
Namely, $\phi_{\omega}$ is the zero eigenfunction 
of $L_{\omega, -}$. 
It follows from Lemma \ref{lem-sim} 
that $\ker L_{\omega, -} = \operatorname{span}
\{\phi_{\omega}\}$. 
This completes the proof. 
\end{proof}

\section{Proof of stability of the ground 
state}
\label{sec-sta}
\subsection{Spectral conditions}

\begin{proof}[Proof of Theorem \ref{thm:1.1} (iii) and (v)]
Let $\omega > \omega_{1}$.
We consider the following minimization problem: 
\[  \nu_\omega:= 
    \inf\{
    \langle L_{\omega, +} u, u \rangle 
    \colon u \in X,\: \|u\|_{L^{2}} = 1\}.  \]
Since $\langle L_{\omega, +} \phi_{\omega}, 
\phi_{\omega} \rangle = - (p-1) \|\phi_{\omega}\|_{L^{p+1}}^{p+1} <0$, 
we have $\nu_\omega < 0$. 
Then using a similar argument to \cite[Lemma 8.1.11]{MR2002047},
there exists a minimizer $\chi_{\omega} 
\in X$ that satisfies $L_{\omega, +} 
\chi_{\omega} = \nu_\omega \chi_{\omega}$. 
In addition, it follows from 
the min-max theorem (see e.g. 
Reed and Simon~\cite[Theorem XIII.1]{MR493421}), 
$\nu_\omega$ is the 
first eigenvalue of $L_{\omega, +}$. 
By Lemma \ref{lem-sim}, we see that 
$\nu_\omega$ is simple. 
In addition, since $L_{\omega, -} 
\phi_{\omega} = 0$ and $\phi_{\omega}$ is positive, 
we find that $L_{\omega, -}$ is 
a non-negative operator. 
Thus, Theorem \ref{thm:1.1} \textrm{(iii)} 
holds. 

Next, we will show Theorem \ref{thm:1.1} (v).
Since $\omega > \omega_{1}$, 
we see that the operator $-\p_{xx} + V(x) 
    + \omega$ is invertible. 
In addition, $\lim_{|x| \to \infty} \phi_{\omega}^{p-1}(x) 
= 0$.     
Thus, we see from the Weyl's essential 
spectrum that 
    $\sigma_{\text{ess}}(L_{\omega, \pm}) 
    = \sigma_{\text{ess}}(- \p_{xx}+ V(x) + \omega)
    = [\delta, \infty)$. 
This completes the proof
\end{proof}

\subsection{Slope condition}
In this section, we study 
the sign of 
$\frac{d}{d \omega} \|\phi_{\omega}\|_{L^2}^2$ 
and give the proof of 
Theorem \ref{thm-stability}.

In what follows, 
we investigate the sign of 
$\frac{d}{d \omega} \|\phi_{\omega}\|_{L^2}^2$.
We note that our argument extends to higher-dimensional cases, except for Lemma \ref{lem:v0posi} below.
Therefore, in order to clarify why condition 
$d = 1$ is necessary, 
we consider the problem in general dimensions.

First, we pay our attention to the 
regularity of $\phi_{\omega}$ with respect to 
$\omega > \omega_{1}$. 
Concerning this, we obtain the following: 
    \begin{lemma}\label{lem-reg}
    Let $d = 1$ and $1 < p < 2^{*} -1$. 
    Then the mapping $\omega \in (\omega_{1}, \infty) \mapsto \phi_{\omega} \in 
    X_{\text{rad}}$ is $C^{1}$.  
    \end{lemma}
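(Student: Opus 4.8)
The plan is to realize $\phi_\omega$ as a zero of a smooth map between Banach spaces and invoke the implicit function theorem, using the non-degeneracy established in Theorem~\ref{thm-nond} to guarantee invertibility of the linearization. Concretely, define $F\colon (\omega_1,\infty)\times X_{\mathrm{rad}}\to X_{\mathrm{rad}}^*$ by $F(\omega,u):=S_\omega'(u)$ (restricted to the radial, real-valued sector), so that $F(\omega,\phi_\omega)=0$. First I would check that $F$ is of class $C^1$ jointly in $(\omega,u)$: the dependence on $\omega$ is affine (the term $\tfrac{\omega}{2}\|u\|_{L^2}^2$ contributes $\omega u$ to $S_\omega'$), hence smooth, while the dependence on $u$ is $C^1$ because $u\mapsto |u|^{p-1}u$ is $C^1$ from $H^1(\R)\hookrightarrow L^\infty(\R)$ in one dimension for $1<p<\infty$ — this is precisely where $d=1$ and the embedding $X\hookrightarrow H^1(\R)$ from \eqref{eq1-1} are used to control the nonlinearity, and one would record the local Lipschitz bound on $D_uF$.

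Next I would identify $D_uF(\omega,\phi_\omega)$ with $S_\omega''(\phi_\omega)$ acting on $X_{\mathrm{rad}}$, which on the real radial sector is the operator $L_{\omega,+}$ restricted to $X_{\mathrm{rad}}$. By Theorem~\ref{thm-nond}, $\phi_\omega$ is non-degenerate in $X_{\mathrm{rad}}$, i.e.\ $\ker\big(L_{\omega,+}|_{X_{\mathrm{rad}}}\big)=\{0\}$. To upgrade this to invertibility of $L_{\omega,+}\colon X_{\mathrm{rad}}\to X_{\mathrm{rad}}^*$, I would note that $L_{\omega,+}=(-\p_{xx}+V+\omega)-p\phi_\omega^{p-1}$, where $-\p_{xx}+V+\omega\colon X\to X^*$ is an isomorphism for $\omega>\omega_1$ (it is the Riesz isomorphism for a norm equivalent to $\|\cdot\|_{X_\omega}$, by definition of $\omega_1$), and $p\phi_\omega^{p-1}$ is a relatively compact perturbation since $\phi_\omega^{p-1}\in L^\infty$ decays at infinity (Weyl's theorem / compact Sobolev embedding on the effective domain, as already used in the proof of Theorem~\ref{thm:1.1}(v)). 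Hence $L_{\omega,+}$ is Fredholm of index zero on $X_{\mathrm{rad}}$, and trivial kernel forces bijectivity; the bounded inverse theorem then gives a bounded inverse.

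With $F$ of class $C^1$ and $D_uF(\omega,\phi_\omega)$ an isomorphism of $X_{\mathrm{rad}}$ onto $X_{\mathrm{rad}}^*$, the implicit function theorem yields, for each fixed $\omega_0>\omega_1$, a neighborhood $I$ of $\omega_0$ and a $C^1$ map $\omega\in I\mapsto \tilde\phi_\omega\in X_{\mathrm{rad}}$ with $F(\omega,\tilde\phi_\omega)=0$ and $\tilde\phi_{\omega_0}=\phi_{\omega_0}$. It remains to argue $\tilde\phi_\omega=\phi_\omega$ for $\omega$ near $\omega_0$: by continuity, $\tilde\phi_\omega$ stays close to $\phi_{\omega_0}$, hence is positive for $\omega$ near $\omega_0$ (positivity is an open condition in $X\hookrightarrow C(\R)$ together with the decay from the ODE), and by Theorem~\ref{thm-unique} the positive radial solution of \eqref{sp1} is unique, so $\tilde\phi_\omega=\phi_\omega$. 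Since $\omega_0>\omega_1$ was arbitrary, the global map $\omega\mapsto\phi_\omega$ is $C^1$ on $(\omega_1,\infty)$. The main obstacle is the invertibility step: one must be careful that the kernel computation in Theorem~\ref{thm-nond} is genuinely in the space $X_{\mathrm{rad}}$ (not merely among classical solutions) and that the Fredholm/relative-compactness argument is valid uniformly enough near $\omega_0$; the $C^1$ regularity of $F$ in $u$ is routine given \eqref{eq1-1}, but the exact functional-analytic setting for $S_\omega''$ as a map $X\to X^*$ should be pinned down, since $V$ may be unbounded (e.g.\ $V(x)=|x|^2$).
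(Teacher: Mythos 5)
Your proposal is correct and is essentially the paper's own argument: the paper proves this lemma only by citing the implicit-function-theorem argument of Shatah and Strauss, which is exactly what you carry out, with the linearization $L_{\omega,+}|_{X_{\mathrm{rad}}}$ invertible by the non-degeneracy of Theorem~\ref{thm-nond} plus a Fredholm perturbation argument, and the local branch identified with $\phi_\omega$ via the uniqueness of Theorem~\ref{thm-unique}. The one spot to tighten is the claim that positivity of the branch is ``an open condition in $C(\R)$'': since $\tilde\phi_\omega$ decays at infinity this is not literally open, and one should instead show $\tilde\phi_\omega\ge 0$ by testing the equation against the negative part (whose $X_\omega$-norm would otherwise be bounded below by a fixed constant) and then upgrade to strict positivity by the ODE uniqueness argument of Theorem~\ref{thm:3.1}.
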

We obtain Lemma \ref{lem-reg} by using 
a similar argument of Shatah and 
Strauss~\cite{MR804458}. 
By adapting the abstract 
result of Grillakis, Shatah and 
Strauss~\cite{MR901236} to 
our equation \eqref{NLS1}, 
we obtained the following: 
\begin{theorem} \label{s-sta}
Let $d = 1$ and $\phi_{\omega} \in \mathcal{G}_{\omega}$.
Assume that Assumption \ref{l-wellposed} and   
(V1). Then the 
standing waves $e^{i \omega t} 
\phi_{\omega}$ is stable if
$\frac{d}{d \omega} \|\phi_{\omega}\|_{L^2}^2 > 0$ and 
unstable if $
\frac{d}{d \omega} \|\phi_{\omega}\|_{L^2}^2
< 0$.  
\end{theorem}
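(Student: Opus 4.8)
The plan is to verify that the standing wave $e^{i\omega t}\phi_\omega$ satisfies the hypotheses of the Grillakis--Shatah--Strauss abstract theorem \cite{MR901236}, and then read off stability/instability from the sign of $\frac{d}{d\omega}\|\phi_\omega\|_{L^2}^2$. The abstract framework treats Hamiltonian systems $i\psi_t = E'(\psi)$ invariant under the one-parameter gauge group $\psi \mapsto e^{i\theta}\psi$, with conserved charge $\tfrac12\|\psi\|_{L^2}^2$. Here the action is $S_\omega = E + \tfrac{\omega}{2}\|\cdot\|_{L^2}^2$ and $\phi_\omega$ is a critical point by construction; the scalar function $d(\omega) := S_\omega(\phi_\omega)$ of GSS satisfies $d'(\omega) = \tfrac12\|\phi_\omega\|_{L^2}^2$ and $d''(\omega) = \tfrac12\frac{d}{d\omega}\|\phi_\omega\|_{L^2}^2$, so the sign condition in GSS on $d''$ is exactly the sign condition on $\frac{d}{d\omega}\|\phi_\omega\|_{L^2}^2$ in the statement.

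The steps, in order: (1) Record that Assumption \ref{l-wellposed} supplies local well-posedness in $X$ together with conservation of $E$ and $\|\cdot\|_{L^2}$, which is the dynamical input GSS requires; note also the embedding \eqref{eq1-1} so the functional-analytic setting $X \hookrightarrow H^1(\R)$ is in place. (2) Invoke Lemma \ref{lem-reg} to get that $\omega \mapsto \phi_\omega$ is $C^1$ into $X_{\mathrm{rad}}$, hence $d(\omega) = S_\omega(\phi_\omega)$ is $C^2$ with the derivative identities above; this is the smoothness hypothesis of GSS. (3) Verify the spectral hypotheses on the linearized operator $S_\omega''(\phi_\omega)$: by Theorem \ref{thm:1.1} (iii)--(v) it has exactly one simple negative eigenvalue $\nu_\omega$, kernel exactly $\operatorname{span}\{i\phi_\omega\}$ (the gauge direction), and the rest of the spectrum bounded below by $\delta>0$; this is precisely the "$n(S_\omega'') = 1$, $\ker = $ tangent to the orbit" condition. (4) Apply the GSS dichotomy: when the single negative direction of $S_\omega''$ is "neutralized" by the charge constraint — equivalently $d''(\omega) > 0$ — the ground state is a constrained local minimizer and orbital stability follows from a standard Lyapunov argument using $S_\omega(\psi) - S_\omega(\phi_\omega)$ as Lyapunov functional modulo the gauge orbit; when $d''(\omega) < 0$, GSS produces an exponentially growing mode and instability.

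The only point needing real care, rather than citation, is making the abstract theorem literally applicable: \eqref{NLS1} is posed on the non-scale-invariant, potentially unbounded-potential space $X$ rather than $H^1(\R)$, so one must check that $(X, \|\cdot\|_{X_\omega}, \|\cdot\|_{L^2})$ fits the GSS triple of spaces and that $E$ is $C^2$ on $X$ with the required boundedness of $E''$ on bounded sets — this uses the subcritical exponent $1<p<2^*-1$ and the Sobolev embedding \eqref{eq1-1}. The continuity of $\omega \mapsto \phi_\omega$ from Lemma \ref{lem-reg} and the spectral picture from Theorem \ref{thm:1.1} do all the heavy lifting, so I expect the main obstacle to be bookkeeping: confirming that the orbit $\{e^{i\theta}\phi_\omega\}$ has the required one-dimensionality and transversality, and that the coercivity estimate for $S_\omega''$ on the subspace orthogonal to $\{i\phi_\omega, \partial_\omega\phi_\omega\}$ holds in the $X_\omega$-norm (not merely $H^1$), which follows from the spectral gap $\delta>0$ together with $\langle S_\omega''(\phi_\omega)\partial_\omega\phi_\omega, \partial_\omega\phi_\omega\rangle = -d''(\omega)$ having the appropriate sign. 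Once these are in place, the conclusion is immediate from \cite{MR901236}.
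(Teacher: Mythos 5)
Your proposal is correct and matches the paper's treatment: the paper gives no separate proof of this theorem, stating only that it is obtained by adapting the abstract result of Grillakis, Shatah, and Strauss to \eqref{NLS1}, with the required hypotheses supplied exactly as you list them --- well-posedness and conservation laws from Assumption~\ref{l-wellposed}, $C^1$ dependence $\omega\mapsto\phi_\omega$ from Lemma~\ref{lem-reg}, and the spectral conditions from Theorem~\ref{thm:1.1} (iii)--(v). Your closing remarks on verifying that the GSS triple of spaces and the coercivity estimate hold in the $X_\omega$-norm are precisely the ``adapting'' the paper alludes to without spelling out.
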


We put
\begin{align*}
    u_\omega
   &:=\frac{\phi_{\omega}}{\|\phi_{\omega}\|_{L^2}},&
    v_\omega
   &:=\partial_\omega u_\omega. 
\end{align*}
Then we have 
\begin{align} \label{eq:iden}
   \|u_\omega\|_{L^2}
   &=1,&
    \int_{\R^d} u_\omega v_\omega dx
   &=\frac12\frac{d}{d\omega}\|u_\omega\|_{L^2}^2
    =0.
\end{align}
Moreover, $u_\omega$ satisfies the equation 
\begin{equation} \label{eq:sp2}
    -\Delta u_{\omega}
    +V(x) u_{\omega}
    +\omega u_{\omega}
    -\mu(\omega) u_\omega^p
    =0 \qquad \mbox{in $\R^{d}$}, 
\end{equation}
where $\mu(\omega):=\|\phi_{\omega}\|_{L^2}^{p-1}$. We note that the sign of $\mu'(\omega)$ coincides with that of $\frac{d}{d\omega}\|\phi_{\omega}\|_{L^2}^2$. 

\begin{lemma}\label{lem:Luvposi}
Let $d \geq 1$ and $1 < 
p < 2^{*} - 1$. 
For $\omega>\omega_1$, the following holds.
\begin{align}
    \label{eq:fm-mmp}
   &\mu(\omega)\int_{\R^d} 
   u_\omega^p v_\omega dx
    =\frac{1}{p-1}\bigl(1-\mu'(\omega)\|\phi_{\omega}\|_{L^{p+1}}^{p+1}\bigr),
\\  \label{eq:upvposi}
   &\int_{\R^d} u_\omega^p 
   v_\omega dx >0.
\end{align} 
\end{lemma}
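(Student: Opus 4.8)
The plan is to derive the two claimed identities by differentiating the constraint equation \eqref{eq:sp2} and the normalization in \eqref{eq:iden} with respect to $\omega$, and then combining them with suitable pairings against $u_\omega$ and $v_\omega$. First I would differentiate \eqref{eq:sp2} in $\omega$ to obtain the linearized equation satisfied by $v_\omega = \partial_\omega u_\omega$:
\[
    -\Delta v_\omega + V(x) v_\omega + \omega v_\omega + u_\omega
    - \mu'(\omega) u_\omega^p - p\,\mu(\omega) u_\omega^{p-1} v_\omega = 0 .
\]
Pairing this equation with $u_\omega$ in $L^2(\R^d)$, integrating by parts, and using \eqref{eq:sp2} to rewrite $\int(-\Delta u_\omega + V u_\omega + \omega u_\omega)v_\omega = \mu(\omega)\int u_\omega^p v_\omega$ together with the orthogonality $\int u_\omega v_\omega\,dx = 0$ from \eqref{eq:iden}, one finds
\[
    \mu(\omega)\int_{\R^d} u_\omega^p v_\omega\,dx
    - \mu'(\omega)\int_{\R^d} u_\omega^{p+1}\,dx
    - p\,\mu(\omega)\int_{\R^d} u_\omega^p v_\omega\,dx = 0 .
\]
Since $\int u_\omega^{p+1}\,dx = \|\phi_\omega\|_{L^2}^{-(p+1)}\|\phi_\omega\|_{L^{p+1}}^{p+1}$ and $\mu(\omega) = \|\phi_\omega\|_{L^2}^{p-1}$, so that $\mu(\omega)\int u_\omega^{p+1}\,dx = \|\phi_\omega\|_{L^2}^{-2}\|\phi_\omega\|_{L^{p+1}}^{p+1}$; but it is cleaner to normalize via the pairing $\langle S'_\omega(\phi_\omega),\phi_\omega\rangle = 0$, which gives $\|\phi_\omega\|_{X_\omega}^2 = \|\phi_\omega\|_{L^{p+1}}^{p+1}$. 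Rearranging the displayed identity as $(1-p)\mu(\omega)\int u_\omega^p v_\omega\,dx = \mu'(\omega)\,\mu(\omega)\int u_\omega^{p+1}\,dx$ and identifying $\mu(\omega)\int u_\omega^{p+1}\,dx$ with $\|\phi_\omega\|_{L^{p+1}}^{p+1}$ up to the normalization bookkeeping yields exactly \eqref{eq:fm-mmp}. The regularity needed to perform this differentiation is supplied by Lemma~\ref{lem-reg}.

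For \eqref{eq:upvposi}, the strategy is a testing argument against $v_\omega$ itself combined with the spectral information from Theorem~\ref{thm:1.1}. Pairing the linearized equation for $v_\omega$ with $v_\omega$ gives
\[
    \langle L_{\omega,-} v_\omega, v_\omega\rangle
    \;+\; \int_{\R^d} u_\omega v_\omega\,dx
    \;-\; \mu'(\omega)\int_{\R^d} u_\omega^p v_\omega\,dx = 0 ,
\]
after recognizing $-\Delta v_\omega + V v_\omega + \omega v_\omega - p\mu(\omega)u_\omega^{p-1}v_\omega$ as (a rescaled version of) $L_{\omega,-}$ applied to $v_\omega$; by \eqref{eq:iden} the middle term vanishes, so $\mu'(\omega)\int u_\omega^p v_\omega\,dx = \langle L_{\omega,-}v_\omega, v_\omega\rangle \ge 0$, since $L_{\omega,-}$ is non-negative (Theorem~\ref{thm:1.1}~(iii)–(v) and the proof thereof, where $L_{\omega,-}\phi_\omega = 0$ with $\phi_\omega > 0$). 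This shows $\mu'(\omega)$ and $\int u_\omega^p v_\omega\,dx$ have the same sign. To pin down that $\int u_\omega^p v_\omega\,dx > 0$ strictly, I would feed the already-established relation \eqref{eq:fm-mmp} back in: if $\int u_\omega^p v_\omega\,dx \le 0$ then from \eqref{eq:fm-mmp} we get $\mu'(\omega)\|\phi_\omega\|_{L^{p+1}}^{p+1} \ge 1 > 0$, so $\mu'(\omega) > 0$, but then the sign-matching forces $\int u_\omega^p v_\omega\,dx > 0$, a contradiction; hence $\int u_\omega^p v_\omega\,dx > 0$.

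The main obstacle I anticipate is not the algebra but the rigorous justification of the differentiation under the integral sign and the identification of the operator acting on $v_\omega$ with the self-adjoint operator $L_{\omega,-}$ (rather than $L_{\omega,+}$, note the coefficient is $p\phi_\omega^{p-1}$ in both here but the relevant quadratic form positivity is that of $L_{\omega,-}$). One must check that $v_\omega \in X$ — which is exactly the content of Lemma~\ref{lem-reg} — and that all the boundary terms in the integrations by parts vanish, using $X \hookrightarrow H^1(\R^d)$ and the decay of $\phi_\omega$; this requires knowing $v_\omega$ lies in the form domain and that $u_\omega^p v_\omega, u_\omega v_\omega \in L^1$. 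A secondary subtlety is the careful tracking of the normalization constants relating $\mu(\omega)$, $\|\phi_\omega\|_{L^2}$, $\|\phi_\omega\|_{L^{p+1}}$, and the rescaled equation \eqref{eq:sp2} versus the original \eqref{sp1}, so that the right-hand side of \eqref{eq:fm-mmp} comes out with precisely the stated constant $\frac{1}{p-1}$.
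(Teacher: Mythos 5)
Your derivation of \eqref{eq:fm-mmp} follows the same route as the paper (differentiate \eqref{eq:sp2} in $\omega$, pair with $u_\omega$, use self-adjointness), but your displayed identity drops the term coming from $\langle u_\omega,u_\omega\rangle=\|u_\omega\|_{L^2}^2=1$; the orthogonality $\int u_\omega v_\omega\,dx=0$ is irrelevant in this pairing. That constant $1$ is precisely the source of the ``$1-$'' in \eqref{eq:fm-mmp}, so as written your identity would give $\mu(\omega)\int u_\omega^p v_\omega\,dx=-\tfrac{1}{p-1}\mu'(\omega)\int u_\omega^{p+1}\,dx$, which is not the claimed formula and would destroy the later stability argument. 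This is a fixable slip, but it must be corrected, not absorbed into ``normalization bookkeeping.''

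The proof of \eqref{eq:upvposi} has a genuine gap. The operator obtained by differentiating the Euler--Lagrange equation in $\omega$ is $-\Delta+V+\omega-p\mu(\omega)u_\omega^{p-1}$, i.e.\ $L_{\omega,+}=S_\omega''(\phi_\omega)|_{\mathrm{real}}$, not $L_{\omega,-}$ (the identical displayed formulas for $L_{\omega,\pm}$ in the paper are a typo; $L_{\omega,-}$ carries the coefficient $1$ rather than $p$ and is the operator annihilating $\phi_\omega$). Since $L_{\omega,+}$ has a simple negative eigenvalue (Theorem~\ref{thm:1.1}~(iii)), the quadratic form $\langle L_{\omega,+}v_\omega,v_\omega\rangle$ has no a priori sign, so your key inequality $\mu'(\omega)\int u_\omega^p v_\omega\,dx\ge 0$ does not follow, and the subsequent sign-matching contradiction collapses. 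The paper instead chooses $t_\omega$ so that $t_\omega u_\omega+v_\omega$ is $L^2$-orthogonal to the negative eigenfunction $\chi_\omega$ of $L_{\omega,+}$, uses the non-degeneracy in $X_{\mathrm{rad}}$ to get $\langle L_{\omega,+}(t_\omega u_\omega+v_\omega),t_\omega u_\omega+v_\omega\rangle>0$, and then exploits the algebraic identity $ac=b(b+1)$ among the three pairings $a,b,c$ to deduce $b^2-ac=-b>0$, i.e.\ $b<0$, which is equivalent to $\int u_\omega^p v_\omega\,dx>0$. Some such use of the one-negative-direction structure of $L_{\omega,+}$ is unavoidable here; positivity of $L_{\omega,-}$ alone cannot deliver \eqref{eq:upvposi}.
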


\begin{proof}
Let $Q_\omega$ be a action functional of \eqref{eq:sp2} given by 
\[  I_\omega(v) 
    :=\frac{1}{2}\|\nabla v\|_{L^2}^2
    +\frac12\int_{\R^d} V(x)|v|^2+\frac{\omega}{2}\|v\|_{L^2}^2
    -\frac{\mu(\omega)}{p+1}\|v\|_{L^{p+1}}^{p+1}\]
We note that 
$I_\omega'(u_\omega)=0$ 
and 
    \[
    I_\omega''
(u_\omega)|_{X(\R^d; \R)}=
-\Delta+V(x)+\omega-p\mu(\omega)
u_\omega^{p-1} 
= -\Delta+V(x)+\omega-p \phi_{\omega}^{p-1}
= S_\omega''
(\phi_{\omega})|_{X(\R^d; \R)}. 
    \]
In particular, 
\begin{equation} \label{eq:Jomexp1}
    I_\omega''(u_\omega)
    u_\omega
    =-(p-1)\mu(\omega) 
    u_\omega^{p}.
\end{equation}
By differentiating the equation 
$I_\omega'(u_\omega)=0$ 
with respect to $\omega$, we obtain 
\begin{equation}\label{eq:Jomexp2}
    I_\omega''
    (u_{\omega}) v_\omega
    =-u_\omega + \mu'(\omega)u_{\omega}^{p}.
\end{equation} 
By using \eqref{eq:iden}, \eqref{eq:Jomexp1} 
and \eqref{eq:Jomexp2}, we have
\begin{align}
    \label{eq:exp-a}
   a&:=\langle 
   I_\omega''(u_\omega)
   u_\omega, 
   u_\omega \rangle 
    =-(p-1)\mu(\omega)\int_{\R^d} 
    \phi_{\omega}^{p+1} dx,
\\  \label{eq:exp-b}
   b&:=\langle I_\omega''
   (u_\omega)u_\omega, v_\omega\rangle 
    =\left\{\begin{aligned}
   &\mathopen{}-(p-1)\mu(\omega)\int_{\R^d} u_\omega^p v_\omega dx,
\\ &\mathopen{}\mu'(\omega)\int_{\R^d} u_\omega^{p+1} dx -1, 
    \end{aligned}\right.
\\  \label{eq:exp-c}
   c&:= \langle I_\omega''(
   u_\omega) v_\omega, 
   v_\omega\rangle 
    =\mu'(\omega)\int_{\R^d} 
    u_\omega^{p} v_\omega 
    dx. 
\end{align}
In particular, \eqref{eq:fm-mmp} holds. Let $\chi_\omega$ be the positive and normalized eigenfunction corresponds to the unique negative eigenvalue of $I_\omega''
(u_\omega)$. 
Since $u_\omega$ is also positive function, we have $\int_{\R} 
\chi_\omega u_\omega dx 
>0$. 
This means that there exists 
$t_{\omega} \in\R$ such that $\int_{\R} \chi_\omega
(t_{\omega} u_\omega + v_\omega) 
dx=0$. By using this,  
non-degeneracy of $I_\omega''
(u_\omega)|_{X_\mathrm{rad}(\R^d; \R)} = S_\omega''
(\phi_{\omega})|_{X_\mathrm{rad}(\R^d; \R)}$ and 
Theorem \ref{thm:1.1} \textrm{(iii)}--\textrm{(v)}, we have the following positivety:
\begin{align*}
  0&<\langle I_\omega''
  (u_\omega)(t_{\omega} 
  u_\omega + v_\omega), 
  t_{\omega} 
  u_\omega + v_\omega
  \rangle 
\\ &=at_{\omega}^2+2bt_{\omega}+c
    =a\Bigl(t_{\omega}+\frac{b}{a}\Bigr)^2
     -\frac{1}{a}(b^2-ac). 
\end{align*}
This together with $a<0$ yields that $0<b^2-ac$. We note from the expression \eqref{eq:exp-a}--\eqref{eq:exp-c} that $ac=b(b+1)$. Thus, 
$b = ac - b^2<0$. 
This means \eqref{eq:upvposi} holds.
\end{proof}

\begin{lemma}\label{lem:key1}
Let $d \geq 1$ and $1 < p < 
2^{*} - 1$.  
Then there exist positive constants $C_1, C_2>0$ such that for $\omega>\omega_1$,
\begin{equation} \label{eq:key1}
    C_1\mu'(\omega)\|u_\omega
    \|_{L^{p+1}}^{p+1}
    =C_2\Bigl(1+\frac{4}{d}-p\Bigr)
    -\int_{\R^d} (2V+x\cdot \nabla V)
    u_\omega v_\omega dx
\end{equation}
\end{lemma}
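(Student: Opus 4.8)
The plan is to deduce \eqref{eq:key1} by differentiating in $\omega$ a Pohozaev-type identity for the normalized ground state $u_\omega$ and then removing $\int_{\R^d}u_\omega^p v_\omega\,dx$ by means of \eqref{eq:fm-mmp}. The identity I would use is
\begin{equation}\label{eq:poho-u}
    \|\nabla u_\omega\|_{L^2}^2-\frac12\int_{\R^d}(x\cdot\nabla V)|u_\omega|^2\,dx
    =\frac{d(p-1)}{2(p+1)}\,\mu(\omega)\,\|u_\omega\|_{L^{p+1}}^{p+1}.
\end{equation}
The quickest way to see \eqref{eq:poho-u} is to differentiate $\lambda\mapsto I_\omega\bigl(\lambda^{d/2}u_\omega(\lambda\,\cdot)\bigr)$ at $\lambda=1$: the $\omega$-term is $\lambda$-independent, the kinetic term scales like $\lambda^2$, the $L^{p+1}$-term like $\lambda^{d(p-1)/2}$, and $\frac{d}{d\lambda}\big|_{\lambda=1}\int_{\R^d}V(x/\lambda)|u_\omega|^2\,dx=-\int_{\R^d}(x\cdot\nabla V)|u_\omega|^2\,dx$, so the vanishing of this derivative — a consequence of $I_\omega'(u_\omega)=0$ — is exactly \eqref{eq:poho-u}. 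Equivalently, one tests \eqref{eq:sp2} against $x\cdot\nabla u_\omega$ and eliminates $\omega$ using the Nehari relation $\|\nabla u_\omega\|_{L^2}^2+\int_{\R^d}V|u_\omega|^2+\omega=\mu(\omega)\|u_\omega\|_{L^{p+1}}^{p+1}$.

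The step requiring care is verifying that all the integrals above converge and that the integration by parts (equivalently, that $x\cdot\nabla u_\omega\in X$ and that $\frac d2u_\omega+x\cdot\nabla u_\omega$ is an admissible variation) is legitimate for the potentials allowed by (V1), which include the unbounded $V(x)=|x|^2$ and the singular $V(x)=-|x|^{-\theta}$ with $\theta\in(0,1)$. This follows from the regularity $u_\omega\in C^1(\R)\cap C^3(\R\setminus\{0\})$ (Lemma~\ref{prop-reg}), the fast decay of $u_\omega$ and $\nabla u_\omega$ at infinity forced by \eqref{eq:sp2} (where $V+\omega$ is bounded below), and $V,\,x\cdot\nabla V\in L^1_{\mathrm{loc}}$ near the origin together with $u_\omega'(r)=O(r^{1-\theta})$ as $r\to0$. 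This is the part of the proof I expect to be the main obstacle.

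Next I would differentiate \eqref{eq:poho-u} in $\omega$, which is licensed by Lemma~\ref{lem-reg} and differentiation under the integral sign. Using $v_\omega=\partial_\omega u_\omega$, $\partial_\omega\|u_\omega\|_{L^{p+1}}^{p+1}=(p+1)\int_{\R^d}u_\omega^p v_\omega\,dx$, and
\[
    \partial_\omega\|\nabla u_\omega\|_{L^2}^2
    =2\int_{\R^d}\nabla u_\omega\cdot\nabla v_\omega\,dx
    =-2\int_{\R^d}(\Delta u_\omega)v_\omega\,dx
    =-2\int_{\R^d}Vu_\omega v_\omega\,dx+2\mu(\omega)\int_{\R^d}u_\omega^p v_\omega\,dx,
\]
where the last equality uses \eqref{eq:sp2} and $\int_{\R^d}u_\omega v_\omega\,dx=0$ from \eqref{eq:iden}, the differentiated identity collapses to
\[
    -\int_{\R^d}(2V+x\cdot\nabla V)u_\omega v_\omega\,dx
    +\Bigl(2-\frac{d(p-1)}{2}\Bigr)\mu(\omega)\int_{\R^d}u_\omega^p v_\omega\,dx
    -\frac{d(p-1)}{2(p+1)}\mu'(\omega)\|u_\omega\|_{L^{p+1}}^{p+1}=0.
\]

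Finally, since $2-\tfrac{d(p-1)}{2}=\tfrac d2\bigl(1+\tfrac4d-p\bigr)$, I would substitute $\mu(\omega)\int_{\R^d}u_\omega^p v_\omega\,dx=\tfrac1{p-1}\bigl(1-\mu'(\omega)\|u_\omega\|_{L^{p+1}}^{p+1}\bigr)$ from \eqref{eq:fm-mmp} and collect the terms proportional to $\mu'(\omega)\|u_\omega\|_{L^{p+1}}^{p+1}$; this yields \eqref{eq:key1} with
\[
    C_1=\frac d2\Bigl(\frac{1+\tfrac4d-p}{p-1}+\frac{p-1}{p+1}\Bigr),\qquad
    C_2=\frac{d}{2(p-1)}.
\]
It remains to check $C_1,C_2>0$. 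Obviously $C_2>0$; and $C_1=\tfrac d2\cdot\dfrac{(1+4/d-p)(p+1)+(p-1)^2}{(p-1)(p+1)}$, where the numerator simplifies to $2+\tfrac4d+\tfrac{2(2-d)}{d}\,p$, which is positive for every $1<p<2^{*}-1$: it equals $6+2p$ when $d=1$, equals $4$ when $d=2$, and for $d\ge3$ it is affine and strictly decreasing in $p$ and vanishes precisely at $p=2^{*}-1$. Hence $C_1,C_2>0$ and the lemma follows.
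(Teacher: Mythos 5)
Your proof is correct and follows essentially the same route as the paper: the same Pohozaev identity obtained by differentiating $t\mapsto I_\omega(t^{d/2}u_\omega(t\cdot))$ at $t=1$, then differentiation in $\omega$ combined with \eqref{eq:fm-mmp}, and your constants $C_1=\frac{d}{2}\bigl(\frac{1+4/d-p}{p-1}+\frac{p-1}{p+1}\bigr)=\frac{d+2-(d-2)p}{(p-1)(p+1)}$ and $C_2=\frac{d}{2(p-1)}$ agree exactly with the paper's. The only (cosmetic) difference is that the paper eliminates $\|\nabla u_\omega\|_{L^2}^2$ via the Nehari relation \emph{before} differentiating in $\omega$, whereas you differentiate the Pohozaev identity directly and then compute $\partial_\omega\|\nabla u_\omega\|_{L^2}^2$ by testing the equation against $v_\omega$.
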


\begin{proof}
We put $\alpha:=d(p-1)/2$. By differentiating the relation
\begin{align*}
    I_\omega
    (t^{d/2} u_\omega(tx))
   &=\frac{t^{2}}{2}
   \|\nabla u_\omega\|_{L^2}^2
    +\frac{1}{2}\int_{\R^d} V(x/t)
    u_\omega^2 dx
    +\frac{\omega}
    {2}\|u_\omega\|_{L^2}^2
    -\frac{t^{\alpha}\mu(\omega)}{p+1}\|u_\omega\|_{L^{p+1}}^{p+1}
\end{align*}
at $t=1$ and using 
$I_\omega'(u_\omega)=0$, 
$\|\nabla u_\omega\|_{L^2}^2
=\mu(\omega)\|u_\omega\|_{L^{p+1}}^{p+1}-\int_{\R^d} Vu_\omega^2 dx -\omega\|u_\omega\|_{L^2}^2$ 
and $\|u_{\omega}\|_{L^{2}} = 1$, 
we have
\begin{align*}
  0&=\|\nabla u_\omega\|_{L^2}^2
    -\frac{1}{2}\int_{\R^d} 
    (x\cdot \nabla V) u_\omega^2 dx
    -\frac{\alpha\mu(\omega)}{p+1}|u|_{L^{p+1}}^{p+1}
\\ &=-\omega-\frac{1}{2}\int_{\R^d} 
(2V+x\cdot \nabla V)u_\omega^2 dx 
    +\Bigl(1-\frac{\alpha}{p+1}\Bigr)\mu(\omega)
    \|u_\omega\|_{L^{p+1}}^{p+1}. 
\end{align*}
Moreover, by differentiating 
the above identity with respect to $\omega$ and using \eqref{eq:fm-mmp}, we obtain
\begin{align*}
   &1+\int_{\R^d} 
   (2V+x\cdot \nabla V)
   u_\omega v_\omega dx
\\ &=\Bigl(1-\frac{\alpha}{p+1}\Bigr)\mu'(\omega)
\|u_\omega\|_{L^{p+1}}^{p+1}
    +\Bigl(1-\frac{\alpha}{p+1}\Bigr)(p+1)\mu(\omega)
    \int_{\R^d} u_\omega^{p} 
    v_\omega dx
\\ &=\Bigl(1-\frac{\alpha}{p+1}\Bigr)\mu'(\omega)
\|u_\omega\|_{L^{p+1}}^{p+1}
    +\Bigl(1-\frac{\alpha}{p+1}\Bigr)\frac{p+1}{p-1}\bigl(1-\mu'(\omega)
    \|u_\omega\|_{L^{p+1}}^{p+1}\bigr)
\\ &=-\Bigl(1-\frac{\alpha}{p+1}\Bigr)\frac{2}{p-1}
\mu'(\omega)
\|u_\omega\|_{L^{p+1}}^{p+1}
    +\Bigl(1-\frac{\alpha}{p+1}\Bigr)\frac{p+1}{p-1}.
\end{align*}
It follows from $\alpha = d(p-1)/2$ that   
\begin{align*}
   &\Bigl(1-\frac{\alpha}{p+1}\Bigr)\frac{2}{p-1}
    =\frac{d+2-(d-2)p}{(p-1)(p+1)}>0,
\\ &\Bigl(1-\frac{\alpha}{p+1}\Bigr)\frac{p+1}{p-1}-1
    =\frac{d}{2(p-1)}\Bigl(1+\frac{4}{d}-p\Bigr).
\end{align*}
Therefore, we see that 
\eqref{eq:key1} with 
$C_{1} = \frac{d+2-(d-2)p}{(p-1)(p+1)}$ and 
$C_{2} = \frac{d}{2(p-1)}$. 
\end{proof}

\begin{lemma} \label{lem:posione}
Let $d \geq 1$ and $1 < p < 2^{*} -1$. If $\mu'(\omega) \le 0$, then the set $[v_\omega > 0] := \{r \ge 0\colon v_\omega(r) > 0\}$ is a nonempty bounded interval. 
\end{lemma}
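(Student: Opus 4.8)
\textbf{Proof proposal for Lemma \ref{lem:posione}.}

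The plan is to exploit that $u_\omega > 0$ solves \eqref{eq:sp2} and that $v_\omega = \partial_\omega u_\omega$ solves the linearized equation \eqref{eq:Jomexp2}, namely $I_\omega''(u_\omega) v_\omega = -u_\omega + \mu'(\omega) u_\omega^p$, i.e.
\[
-\Delta v_\omega + V(x) v_\omega + \omega v_\omega - p\mu(\omega) u_\omega^{p-1} v_\omega = -u_\omega + \mu'(\omega) u_\omega^p .
\]
First I would record three facts already available: from \eqref{eq:iden} we have $\int u_\omega v_\omega\,dx = 0$, so $v_\omega$ changes sign and $[v_\omega>0]$ is nonempty; from \eqref{eq:upvposi} we have $\int u_\omega^p v_\omega\,dx > 0$; and by Lemma \ref{lem-reg} (in $d=1$; in general dimensions one uses the analogous radial regularity) $v_\omega$ is a continuous radial function decaying at infinity, so $[v_\omega>0]$ is an open subset of $[0,\infty)$, hence a disjoint union of open intervals, automatically bounded because $v_\omega(r)\to 0$ and, more quantitatively, because $v_\omega$ lies in $X_{\rm rad}$.

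The real content is that $[v_\omega>0]$ is a \emph{single} interval, i.e. it has the form $[0,\rho)$ or is empty. The natural mechanism is a Sturm-type comparison/nodal argument. Since $u_\omega>0$ solves the homogeneous equation $L u_\omega := -\Delta u_\omega + (V+\omega - \mu(\omega)u_\omega^{p-1})u_\omega = 0$, while $v_\omega$ solves $(L + (1-p)\mu(\omega)u_\omega^{p-1}) v_\omega = -u_\omega + \mu'(\omega)u_\omega^p$, I would test the $v_\omega$-equation against $u_\omega$ on a putative bounded nodal interval $(\rho_1,\rho_2)\subset(0,\infty)$ of $v_\omega$ (so $v_\omega>0$ inside and $v_\omega=0$ at the endpoints, or $v_\omega'(0)=0$ at a left endpoint $0$), integrate by parts using that $u_\omega$ solves the homogeneous equation, and obtain
\[
\int_{\rho_1}^{\rho_2}\bigl(-u_\omega + \mu'(\omega)u_\omega^p\bigr)u_\omega \,r^{d-1}\,dr
= -(p-1)\mu(\omega)\int_{\rho_1}^{\rho_2} u_\omega^{p} v_\omega\, r^{d-1}\,dr + (\text{boundary terms}).
\]
Because $u_\omega>0$, $\mu(\omega)>0$, and $\mu'(\omega)\le 0$, the left-hand side integrand $(-u_\omega+\mu'(\omega)u_\omega^p)u_\omega$ is strictly negative, while the boundary terms coming from $W(u_\omega,v_\omega):=u_\omega v_\omega' - u_\omega' v_\omega$ evaluated at the endpoints of a nodal interval have a definite sign (at an interior zero $\rho_i$ of $v_\omega$ one has $v_\omega'(\rho_i)\le 0$ if $\rho_i$ is a right endpoint and $\ge 0$ if a left endpoint, so $u_\omega(\rho_i)v_\omega'(\rho_i)$ contributes with a controllable sign; at $r=0$ both $u_\omega'$ and $v_\omega'$ vanish). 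Comparing signs on every nodal interval forces the number and location of such intervals: a second positivity interval (one not adjacent to $0$) would produce a sign contradiction, because on the \emph{unbounded} tail region where $v_\omega<0$ we would be double-counting the deficit $\int u_\omega^p v_\omega>0$ with the wrong sign. Concretely, I expect the clean statement to be: $v_\omega$ has exactly one sign change on $[0,\infty)$, it is positive near $0$ and negative near $\infty$; then $[v_\omega>0]=[0,\rho)$ and we are done.

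The main obstacle is making the nodal-interval comparison fully rigorous \emph{without} already knowing the global sign pattern of $v_\omega$ — in particular ruling out the possibility that $v_\omega$ oscillates with several sign changes, or that $[v_\omega>0]$ consists of two intervals, one touching $0$ and one in the interior. The cleanest route is probably: (1) show $v_\omega$ cannot be eventually positive (it decays and $\int u_\omega v_\omega=0$ with $u_\omega^p v_\omega$ having positive integral forces negativity of $v_\omega$ on the tail via the decay rates of $\phi_\omega$ established in the regularity lemmas); (2) on the leftmost nodal interval apply the Wronskian identity with the $r=0$ boundary term vanishing, to pin down that $v_\omega$ must be positive there precisely when $\mu'(\omega)\le 0$ (this is where the hypothesis is used, together with $\int u_\omega^p v_\omega>0$); (3) use a Sturm-type argument — since $u_\omega>0$ has no zeros, any two consecutive zeros of $v_\omega$ would, via the sign of the forcing term $-u_\omega+\mu'(\omega)u_\omega^p<0$, be incompatible with $u_\omega$ being a positive solution of the homogeneous equation on the same interval — to conclude $v_\omega$ has at most one zero in $(0,\infty)$. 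Steps (1) and (3) are where I anticipate the technical work; step (2) is essentially the bookkeeping already prepared in Lemmas \ref{lem:Luvposi} and \ref{lem:key1}.
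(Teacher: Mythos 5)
Your proposal correctly assembles the available ingredients (nonemptiness from $\int u_\omega v_\omega\,dx=0$, the role of \eqref{eq:upvposi}, and the fact that $\mu'(\omega)\le 0$ must enter through the forcing term $-u_\omega+\mu'(\omega)u_\omega^p$), but the two substantive steps — connectedness and boundedness of $[v_\omega>0]$ — are not actually carried out, and the mechanism you propose for connectedness does not close. First, boundedness is \emph{not} ``automatic because $v_\omega(r)\to 0$'': the set $[v_\omega>0]$ could perfectly well be an unbounded tail $(r_1,\infty)$ on which $v_\omega$ is positive and decays. The paper rules this out by a concrete comparison using the strict radial monotonicity of $u_\omega$: if $[v_\omega>0]=(r_1,\infty)$, then $u_\omega^{p-1}\ge u_\omega(r_1)^{p-1}$ where $v_\omega\le 0$ and $u_\omega^{p-1}\le u_\omega(r_1)^{p-1}$ where $v_\omega\ge 0$, whence $\int u_\omega^p v_\omega\,r^{d-1}dr\le u_\omega(r_1)^{p-1}\int u_\omega v_\omega\,r^{d-1}dr=0$, contradicting \eqref{eq:upvposi}. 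Your ``step (1)'' appeal to decay rates is not a substitute for this.

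Second, the Sturm/Wronskian route you sketch for connectedness has a sign problem and a false premise. The premise: $u_\omega$ does \emph{not} solve the homogeneous equation associated with the operator acting on $v_\omega$; one has $I_\omega''(u_\omega)u_\omega=-(p-1)\mu(\omega)u_\omega^p\ne 0$ (it is $L_{\omega,-}u_\omega=0$, not $L_{\omega,+}u_\omega=0$), so classical Sturm separation against $u_\omega$ does not bound the zeros of $v_\omega$. If you nevertheless run Green's identity on a positivity nodal interval $(\rho_1,\rho_2)$, the boundary terms satisfy $v_\omega'(\rho_1)\ge 0$, $v_\omega'(\rho_2)\le 0$ and you obtain the \emph{consistent} inequality $(p-1)\mu(\omega)\int_{\rho_1}^{\rho_2}u_\omega^pv_\omega r^{d-1}dr\ge\int_{\rho_1}^{\rho_2}(u_\omega^2-\mu'(\omega)u_\omega^{p+1})r^{d-1}dr>0$, i.e.\ no contradiction; the ``double-counting on the tail'' you invoke is not substantiated. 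The paper's actual argument is spectral, not oscillatory: if $[v_\omega>0]$ had two components $I_1,I_2$, the truncations $v_j:=v_\omega 1_{I_j}$ are disjointly supported, satisfy $\langle I_\omega''(u_\omega)v_j,v_j\rangle=\langle -u_\omega+\mu'(\omega)u_\omega^p,v_j\rangle<0$ (this is exactly where $\mu'(\omega)\le 0$ and $v_j\ge 0$ are used) and have vanishing cross term, so a suitable combination $v_1+tv_2$ is orthogonal to the ground state $\chi_\omega$ of $I_\omega''(u_\omega)$ yet makes the quadratic form negative — contradicting that the Morse index is one (Theorem~\ref{thm:1.1}~(iii)--(v)). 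This Morse-index argument is the missing idea in your proposal.
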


\begin{proof}
First, it is easy to see that $[v_\omega>0]\ne\emptyset$ due to $\int_{\R^d} u_\omega v_\omega dx =0$ (see \eqref{eq:iden}), $u_{\omega}(r) > 0$ 
for all $r > 0$, and $v_\omega\not\equiv 0$. 

Next, suppose that the set $[v_\omega>0]$ consists of more than one connected component.
Then there exist $0\le r_1<r_2\le r_3<r_4\le\infty$ such that 
\[  r_1=0 \text{ or } 
    v_\omega(r_1)=0, \quad
    v_\omega(r_2)
    = v_\omega(r_3)
    =v_\omega(r_4)=0,\quad 
    v_\omega>0 
    \text{ on } (r_1, r_2)\cup (r_3, r_4).  \]
We put $I_1:=[r_1, r_2)$, $I_2:=(r_3, r_4)$, and $v_j:= v_\omega 1_{I_j}$, where $1_A$ is the characteristic function of the set $A$. Then we see that $v_j\in X$ and there is $t\in \R$ such that $\int_{\R^d} \chi_\omega (v_1+t v_2) dx=0$, where $\chi_\omega$ is the positive and normalized eigenfunction corresponds to the unique negative eigenvalue of $I_\omega''(u_\omega)$. This implies 
\begin{equation} \label{eq:J''posi}
    \langle I_\omega''
    (u_\omega)(v_1 + 
    t v_2), v_1+t v_2\rangle\ge 0.
\end{equation}
On the other hand, by using the equation $I_\omega''(u_\omega)
v_\omega=-u_\omega+\mu'(\omega)
u_\omega^p$ and integration by parts, we obtain 
\begin{align*}
    \langle I_\omega''
    (u_\omega)
    v_j, v_j\rangle 
   &=\langle -
   u_\omega+\mu'(\omega)
   u_{\omega}^p, 
   v_j\rangle 
    <0,&
    \langle I_\omega''
    (u_\omega)
    v_1, v_2\rangle 
   &=0,
\end{align*}
where we used the assumption $\mu'(\omega)\le 0$, the positivity of 
$u_{\omega}, v_{j}$ 
and $\operatorname{supp} v_{1} \cap \operatorname{supp}v_{2} = \emptyset$.
Therefore, $\langle I_\omega''
    (u_\omega)(v_1 + 
    t v_2), v_1+t v_2\rangle<0$.
This contradicts \eqref{eq:J''posi}. 
Thus, the set $[v_\omega>0]$ is connected. 

Finally, suppose that $[v_\omega>0]$ is unbounded. Since $[v_\omega>0]$ is connected and $\int_{\R^d}u_\omega v_\omega=0$, it follows that $[v_\omega>0]=(r_1, \infty)$ for some $r_1>0$. Thus, by the positivity and the decreasing monotonicity of 
$u_\omega$, we have
\begin{align*}
    \int_0^{r_1}u_\omega^p v_\omega r^{d-1}dr
   &\le u_\omega(r_1)^{p-1}\int_0^{r_1} u_\omega 
    v_\omega r^{d-1}dr, &
    \int_{r_1}^\infty u_\omega^p v_\omega r^{d-1}dr
   &\le u_\omega(r_1)^{p-1}\int_{r_1}^\infty u_\omega 
    v_\omega r^{d-1}dr.
\end{align*}
Therefore, by $\int_{\R^d}u_\omega v_\omega=0$, we obtain
\begin{align*}
    \int_0^\infty u_\omega^p v_\omega r^{d-1} dr
    \le u_\omega(r_1)^{p-1} \int_0^\infty u_\omega v_\omega r^{d-1} dr
    =0.
\end{align*}
This contradicts \eqref{eq:upvposi}. Hence, $[v_\omega>0]$ is bounded.
\end{proof}

The following lemma is a key of our proof.

\begin{lemma} \label{lem:v0posi}
Let $1 < p < 2^{*} - 1$. 
If $d=1$ and $\mu'(\omega)\le 0$, then $v_\omega(0)>0$. 
\end{lemma}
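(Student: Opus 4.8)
The plan is to argue by contradiction: suppose $d=1$, $\mu'(\omega)\le 0$, and $v_\omega(0)\le 0$. Recall that $v_\omega=\partial_\omega u_\omega\in X_{\mathrm{rad}}$ satisfies the linearized equation
\[
    I_\omega''(u_\omega)v_\omega = -v_\omega'' + (V(r)+\omega)v_\omega - p\mu(\omega)u_\omega^{p-1}v_\omega = -u_\omega + \mu'(\omega)u_\omega^p \le -u_\omega < 0 \quad\text{for }r>0,
\]
together with $v_\omega'(0)=0$ (from the evenness in one dimension). First I would use Lemma~\ref{lem:posione}: since $\mu'(\omega)\le 0$, the set $[v_\omega>0]$ is a nonempty bounded interval, say $(r_1,r_2)$ with $0\le r_1<r_2<\infty$ and $v_\omega(r_2)=0$. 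The assumption $v_\omega(0)\le 0$ should force $r_1>0$, so that $v_\omega\le 0$ on $[0,r_1]$, $v_\omega>0$ on $(r_1,r_2)$, and $v_\omega\le 0$ on $[r_2,\infty)$, with $v_\omega(r_1)=v_\omega(r_2)=0$.

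The core of the argument is a Wronskian/Sturm-type comparison between $v_\omega$ and $u_\omega$ (or its derivative). The idea: $u_\omega>0$ solves $-u_\omega''+(V+\omega)u_\omega = \mu(\omega)u_\omega^p$, i.e. $-u_\omega'' + (V+\omega - \mu(\omega)u_\omega^{p-1})u_\omega = 0$, so $u_\omega$ is a positive solution of the Schrödinger operator $L_0 := -\partial_{rr}+(V(r)+\omega)-\mu(\omega)u_\omega^{p-1}$, whereas $v_\omega$ satisfies $L_0 v_\omega = (p-1)\mu(\omega)u_\omega^{p-1}v_\omega - u_\omega + \mu'(\omega)u_\omega^p$. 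I would form the Wronskian $W(r):=u_\omega v_\omega' - u_\omega' v_\omega$ and compute
\[
    W'(r) = u_\omega v_\omega'' - u_\omega'' v_\omega = u_\omega\bigl[(V+\omega-p\mu u_\omega^{p-1})v_\omega + u_\omega - \mu'(\omega)u_\omega^p\bigr] - (V+\omega-\mu u_\omega^{p-1})u_\omega v_\omega,
\]
which simplifies to $W'(r) = -(p-1)\mu(\omega)u_\omega^{p}v_\omega + u_\omega^2 - \mu'(\omega)u_\omega^{p+1}$. On the interval $(r_1,r_2)$ where $v_\omega>0$, using $\mu'(\omega)\le 0$ and $\mu(\omega)>0$, we get $W'(r) = u_\omega^2 - (p-1)\mu(\omega)u_\omega^p v_\omega - \mu'(\omega)u_\omega^{p+1}$; the sign here is not immediately definite because of the $-(p-1)\mu u_\omega^p v_\omega$ term, so I would instead integrate $W'$ over $(r_1,r_2)$ and exploit the boundary conditions $W(r_1)=u_\omega(r_1)v_\omega'(r_1)\ge 0$ (since $v_\omega$ enters the positivity set, $v_\omega'(r_1)\ge 0$) and $W(r_2)=u_\omega(r_2)v_\omega'(r_2)\le 0$ (since $v_\omega$ leaves it, $v_\omega'(r_2)\le 0$), hence $\int_{r_1}^{r_2}W'\le 0$. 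Combining with the integrated identity and the global constraint $\int u_\omega v_\omega\,dx=0$ together with $\int u_\omega^p v_\omega\,dx>0$ from \eqref{eq:upvposi} should yield a contradiction — the point being that if $v_\omega\le 0$ on both $[0,r_1]$ and $[r_2,\infty)$ one can bound $\int u_\omega^p v_\omega$ from above by a negative multiple, contradicting its positivity, exactly as in the final step of Lemma~\ref{lem:posione} but now with the extra interval $[0,r_1]$ handled via the monotonicity $u_\omega(r)\le u_\omega(0)$ and $u_\omega(r)\ge u_\omega(r_1)$ appropriately.

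A cleaner route, which I would pursue in parallel, is to test the quadratic form: choose $w:=v_\omega\,1_{(r_1,r_2)}$, which lies in $X_{\mathrm{rad}}$ and satisfies $\langle I_\omega''(u_\omega)w,w\rangle = \langle -u_\omega+\mu'(\omega)u_\omega^p, w\rangle < 0$ since $w>0$ on $(r_1,r_2)$. Meanwhile $v_\omega - w = v_\omega\,1_{[0,r_1]\cup[r_2,\infty)}\le 0$ everywhere, so $u_\omega + (v_\omega-w)$-type test functions or a direct comparison of $\int\chi_\omega w$ against $\int\chi_\omega u_\omega$ (both positive) lets one split off the negative eigendirection and conclude that the Morse-index-one property of $I_\omega''(u_\omega)|_{X_{\mathrm{rad}}}$ (Theorem~\ref{thm:1.1}(iii) and the non-degeneracy) is violated — much as in Lemma~\ref{lem:posione}, but the new feature is that the component $[0,r_1]$ contributes and one must verify $v_\omega'(r_1)$ and the continuity at $r_1$ let $w\in X$. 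The hard part will be showing that $v_\omega(0)\le 0$ genuinely produces a component of $[v_\omega>0]$ strictly inside $(0,\infty)$ with the right sign of $v_\omega'$ at both endpoints, and more importantly that this is incompatible with $v_\omega'(0)=0$: this last boundary condition at $r=0$ is exactly where $d=1$ is essential (in higher dimensions the radial equation has the singular first-order term $\frac{d-1}{r}v_\omega'$ and $v_\omega'(0)=0$ is automatic but the integration-by-parts identities pick up different weights $r^{d-1}$), and making that dependence explicit — rather than the routine ODE comparison — is the crux of the lemma.
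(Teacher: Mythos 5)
There is a genuine gap: neither of your two routes closes, and the key idea of the paper's proof is missing. Your first route forms the Wronskian of $v_\omega$ with $u_\omega$ itself, and as you correctly compute, its derivative contains the term $-(p-1)\mu(\omega)u_\omega^{p}v_\omega$, which is negative precisely on the interval where $v_\omega>0$; you acknowledge the sign is indefinite but never resolve it, and the global constraints $\int u_\omega v_\omega\,dx=0$ and \eqref{eq:upvposi} do not rescue the argument in any evident way. The paper's proof instead compares $v_\omega$ with the \emph{spatial derivative} $u_\omega'$: differentiating the radial equation \eqref{eq:sprad} with respect to $r$ gives \eqref{eq:spu'}, in which the nonlinear term $-p\mu(\omega)u_\omega^{p-1}u_\omega'$ matches exactly the one in the equation \eqref{eq:spv} for $v_\omega$, so the derivative of $u_\omega''v_\omega-u_\omega'v_\omega'$ reduces (for $d=1$) to $V'u_\omega v_\omega-u_\omega u_\omega'+\mu'(\omega)u_\omega^{p}u_\omega'$, every term of which is nonnegative (and the middle one strictly positive) on $(r_1,r_2)$ thanks to $V'\ge0$, $u_\omega'<0$, and $\mu'(\omega)\le0$. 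Integrating over $[r_1,r_2]$ and using $v_\omega(r_1)=v_\omega(r_2)=0$, $v_\omega'(r_1)\ge0$, $v_\omega'(r_2)\le0$, $u_\omega'<0$ makes the boundary contribution nonpositive, giving the contradiction as in \eqref{eq:u'v'int1}. This substitution of $u_\omega'$ for $u_\omega$ is exactly what kills the indefinite term, and it is the step your proposal lacks.

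Your second route also fails as stated: a single positivity component $(r_1,r_2)$ yields only \emph{one} direction $w=v_\omega 1_{(r_1,r_2)}$ with $\langle I_\omega''(u_\omega)w,w\rangle<0$, which is perfectly consistent with the Morse index being one; the argument of Lemma~\ref{lem:posione} needs two disjoint bumps to build a two-dimensional negative subspace, and under the hypothesis $v_\omega(0)\le0$ you only have one. Finally, your diagnosis of why $d=1$ is essential is slightly misplaced: it is not the boundary condition $v_\omega'(0)=0$ (which causes no trouble, since $u_\omega'(0)=0$ makes the boundary term at $r_1=0$ vanish anyway), but the extra term $\frac{d-1}{r^2}u_\omega'v_\omega$ produced by differentiating the radial Laplacian, which is negative on the positivity set when $d\ge2$ and destroys the sign of the right-hand side of \eqref{eq:u'v'intd}.
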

\begin{proof}
Suppose that $v_\omega(0)\le 0$. By Lemma~\ref{lem:posione}, there exists $0\le r_1<r_2$ such that $v_\omega(r_1)=v_\omega(r_2)=0$ and $v_\omega>0$ on $(r_1, r_2)$. Since $u_\omega$ is a positive radial solution of \eqref{eq:sp2}, we have 
\begin{equation} \label{eq:sprad}
    u_\omega'' 
    +\frac{d-1}{r}u_\omega'
    -(V(r)+\omega) u_\omega 
    +\mu(\omega) u_\omega^p
    =0,\quad r>0.
\end{equation}
Differentiating \eqref{eq:sprad} with respect to $r$, we have 
\begin{equation} \label{eq:spu'}
    u_\omega'''
    +\frac{d-1}{r} u_\omega'' 
    -(V(r)+\omega) u_\omega'
    =\frac{d-1}{r^2} u_\omega'
    +V'(r) u_\omega
    -p\mu(\omega) 
    u_\omega^{p-1}
    u_\omega'.
\end{equation}
Differentiating \eqref{eq:sprad} with respect to $\omega$, we have 
\begin{equation} \label{eq:spv}
    v_\omega''
    +\frac{d-1}{r} v_\omega' 
    -(V(r)+\omega)v_\omega
    = u_\omega
    -p\mu(\omega) 
    u_\omega^{p-1}v_\omega
    -\mu'(\omega) u_\omega^p.
\end{equation}
By \eqref{eq:spu'} and \eqref{eq:spv}, we obtain 
\begin{align*}
   &\bigl((u_\omega''
   v_\omega - 
   u_\omega' 
   v_\omega')r^{d-1}\bigr)'
\\ &=\Bigl(\Bigl(
u_\omega'''+\frac{d-1}{r} u_\omega''\Bigr) 
v_\omega 
    -\Bigl(v_\omega'' + 
    \frac{d-1}{r} v_\omega'\Bigr)
    u_\omega' \Bigr)r^{d-1}
\\ &=\Bigl(\frac{d-1}{r^2} 
u_\omega' v_\omega
    +V'(r) u_\omega 
    v_\omega
    - u_\omega u_\omega'
    +\mu'(\omega)
    u_\omega^p 
    u_\omega'\Bigr)r^{d-1}. 
\end{align*}
Integrating the both side over $[r_1, r_2]$ and using $v_\omega(r_1) = v_\omega(r_2)=0$, we obtain 
\begin{align}\label{eq:u'v'intd} 
    \begin{aligned}
   &u_\omega'(r_1) v_\omega'(r_1)r_1^{d-1}
    - u_\omega'(r_2)
    v_\omega'(r_2)r_2^{d-1}
\\ &=\int_{r_1}^{r_2}\Bigl(\frac{d-1}{r^2}u_\omega' v_\omega
    +V'(r) u_\omega v_\omega
    - u_\omega u_\omega'
    +\mu'(\omega) u_\omega^p u_\omega'
    \Bigr)r^{d-1}\,dr.
    \end{aligned}
\end{align}
In particular, if $d=1$,
\begin{align} \label{eq:u'v'int1}
    \begin{aligned}
   &u_\omega'(r_1) v_\omega'(r_1)r_1^{d-1}
    - u_\omega'(r_2) v_\omega'(r_2)r_2^{d-1}
\\ &=\int_{r_1}^{r_2}\Bigl(
    V'(r) u_\omega v_\omega
    - u_\omega u_\omega'
    +\mu'(\omega) u_\omega^p u_\omega'
    \Bigr)r^{d-1}\,dr.
    \end{aligned}
\end{align}
Since $u_\omega'<0$, 
$v_\omega'(r_1)\ge 0$, and $v_\omega'(r_2)\le 0$, 
the left-hand side 
of \eqref{eq:u'v'int1} is non-positive. On the other hand, since 
$u_\omega>0$, 
$v_\omega>0$, 
$V'\ge 0$, $u_\omega'<0$ 
on $(r_1, r_2)$ and $\mu'(\omega)\le 0$, the right-hand side is positive. This is a contradiction. Thus, we obtain 
$v_\omega(0)>0$. 
\end{proof}

\begin{remark}
Our argument does not work in the case of higher dimensions $d\ge 2$. In this case, we  have the relation 
\eqref{eq:u'v'intd} instead of \eqref{eq:u'v'int1}. 
However, it is unclear whether 
the right-hand side of \eqref{eq:u'v'int1} is positive, since the term 
$\frac{d-1}{r^2}u_\omega' 
v_\omega$ is negative.
\end{remark}

\begin{proof}[Proof of Theorem \ref{thm-stability}]
From Theorem \ref{s-sta} and the fact that the sign of $\mu'(\omega)$ coincides with $\frac{d}{d \omega} \|\phi_{\omega}\|_{L^2}^2$, it suffces to show that $\mu'(\omega) > 0$ for all $\omega > \omega_1$. 
Suppose that there exists $\omega\in(\omega_1, \infty)$ such that $\mu'(\omega)\le 0$. Then \eqref{eq:key1} implies 
\begin{equation}\label{eq:posipotential}
    \int_{0}^\infty 
    (2V+rV')u_\omega v_\omega 
    dx
    \begin{cases}
    > 0 &  \text{if $1<p<5$},
\\  \ge 0 & \text{if $p=5$}.
    \end{cases}
\end{equation}
Now by Lemmas~\ref{lem:posione} and \ref{lem:v0posi}, we see that there exists $r_0>0$ such that $v_{\omega}>0$ 
on $[0, r_0)$ and 
$v_{\omega} \le 0$ on 
$(r_0, \infty)$. 
We see in fact that 
$v_{\omega}<0$ on 
$(r_0, \infty)$. Indeed, if 
$v_{\omega}(r_1)=0$ for 
some $r_1>r_0$, we see that 
$v_{\omega}'(r_1)=0$ and 
$v_{\omega}''(r_1)\le 0$, 
which contradict \eqref{eq:spv}. Therefore, by the assumption (V2) and \eqref{eq:iden}, 
we obtain 
\begin{align*}
    \int_{0}^\infty (2V+rV') u_\omega v_\omega dx
   &=\Bigl(\int_{0}^{r_0}+\int_{r_0}^\infty\Bigr) (2V+rV')u_\omega v_\omega dx
\\ &\begin{cases}
    \le (2V+rV')(r_0)\int_0^\infty u_\omega 
    v_\omega dx & \text{if $1<p<5$},
\\  <(2V+rV')(r_0)\int_0^\infty u_\omega 
v_\omega & \text{if $p=5$}
\end{cases}
\\ &=0,
\end{align*}
which contradicts \eqref{eq:posipotential}. Thus, $\mu'(\omega)>0$ for all $\omega>\omega_1$. This completes the proof.
\end{proof}

\appendix

\section{Bounded from below}
\label{sec-bb}
\begin{lemma} \label{lema-1}
Let $d=1$ and assume that 
\emph{(V1)}. 
Then the following assertions hold:
\begin{enumerate}
\renewcommand{\labelenumi}{(\roman{enumi})}
\item $\omega_1$ given by \eqref{eq1-2} is well-defined, namely,
\[  \inf\Bigl\{
    \int_{\R} \bigl(|u_{x}|^{2} + V|u|^{2}\bigr)\,dx
    \colon u \in X,\: \|u\|_{L^{2}} = 1\Bigr\} 
    > -\infty.  \]
\item 
For any $\omega>\omega_1$, the embedding \eqref{eq1-1} holds. 
\end{enumerate}
\end{lemma}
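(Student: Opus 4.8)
The plan is to reduce both assertions to a single quadratic‑form inequality that dominates the negative part of $V$ by a small multiple of the kinetic energy plus a multiple of the $L^2$‑norm. The only place where $V$ can fail to be bounded below is the origin, and there one only knows $V\in L^1_{\mathrm{loc}}$ — this is the main (and essentially only) obstacle, typified by $V(x)=-|x|^{-\theta}$, $\theta\in(0,1)$. It is handled by the one–dimensional interpolation inequality with an adjustable constant: for every $u\in H^1(\R)$ and $\e>0$,
\[
  \|u\|_{L^\infty(\R)}^2\le \e\,\|u_x\|_{L^2(\R)}^2+\e^{-1}\|u\|_{L^2(\R)}^2 ,
\]
which I would obtain from $|u(x)|^2=2\int_{-\infty}^x\Re(\overline u\,u_x)\,dy\le 2\|u\|_{L^2}\|u_x\|_{L^2}$ together with Young's inequality (first for $u\in C_c^\infty(\R)$, then by density).

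Next I would extract the relevant structure of $V$ from (V1). Writing $V=V_+-V_-$ with $V_\pm\ge 0$, the monotonicity in $r=|x|$ gives $V(x)\ge V(1)$ for $|x|\ge 1$, hence $V_-\le|V(1)|$ on $\{|x|\ge 1\}$, while $V_-\in L^1(-1,1)$ because $V\in L^1_{\mathrm{loc}}(\R)$. Applying the $L^\infty$ bound on $(-1,1)$ and the pointwise bound on $\{|x|\ge1\}$, I get for all $u\in H^1(\R)$ and $\e>0$
\[
  \int_\R V_-|u|^2\,dx\le \e\,\|V_-\|_{L^1(-1,1)}\,\|u_x\|_{L^2}^2+\bigl(\e^{-1}\|V_-\|_{L^1(-1,1)}+|V(1)|\bigr)\|u\|_{L^2}^2 ,
\]
and choosing $\e$ so that $\e\,\|V_-\|_{L^1(-1,1)}=\tfrac12$ (the case $\|V_-\|_{L^1(-1,1)}=0$ being trivial) yields the working estimate
\[
  \int_\R V_-|u|^2\,dx\le \tfrac12\,\|u_x\|_{L^2}^2+C_0\,\|u\|_{L^2}^2 ,\qquad C_0=C_0(V) .
\]

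For part (i), given $u\in X$ with $\|u\|_{L^2}=1$ I would simply estimate
\[
  \int_\R\bigl(|u_x|^2+V|u|^2\bigr)\,dx\ge \|u_x\|_{L^2}^2-\int_\R V_-|u|^2\,dx\ge \tfrac12\|u_x\|_{L^2}^2-C_0\ge -C_0 ,
\]
so the infimum in \eqref{eq1-2} is bounded below by $-C_0$; equivalently $\omega_1\le C_0<\infty$, so $\omega_1$ is well defined. (The admissible set is nonempty, since it contains the $L^2$‑normalized elements of $C_c^\infty(\R)\subset X$.)

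For part (ii), fix $\omega>\omega_1$. From the definition of $\omega_1$ one has $\int_\R(|u_x|^2+V|u|^2)\,dx\ge-\omega_1\|u\|_{L^2}^2$ for every $u\in X$, hence $\|u\|_{X_\omega}^2\ge(\omega-\omega_1)\|u\|_{L^2}^2$; in particular $\|\cdot\|_{X_\omega}$ is positive definite and $\|u\|_{L^2}^2\le(\omega-\omega_1)^{-1}\|u\|_{X_\omega}^2$. For the gradient term, using $-\int_\R V|u|^2\le\int_\R V_-|u|^2$ and the working estimate,
\[
  \|u_x\|_{L^2}^2=\|u\|_{X_\omega}^2-\int_\R V|u|^2\,dx-\omega\|u\|_{L^2}^2\le \|u\|_{X_\omega}^2+\tfrac12\|u_x\|_{L^2}^2+C_0\|u\|_{L^2}^2 ,
\]
so that $\|u_x\|_{L^2}^2\le 2\|u\|_{X_\omega}^2+2C_0\|u\|_{L^2}^2\le 2\bigl(1+C_0(\omega-\omega_1)^{-1}\bigr)\|u\|_{X_\omega}^2$. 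Adding this to the $L^2$ bound gives $\|u\|_{H^1(\R)}\le C(\omega)\|u\|_{X_\omega}$, which is \eqref{eq1-1}; taking $\omega=\omega_1+1$ yields the embedding for $\|\cdot\|_X$ itself, and the same computation shows that all the norms $\|\cdot\|_{X_\omega}$, $\omega>\omega_1$, are mutually equivalent, since they differ by multiples of $\|\cdot\|_{L^2}^2\lesssim\|\cdot\|_{X_\omega}^2$. Beyond the interpolation step identified above, every estimate here is routine.
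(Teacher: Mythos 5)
Your proof is correct and follows essentially the same route as the paper: both control $\int V_-|u|^2$ near the origin via the $L^\infty$--$L^1$ pairing combined with the one-dimensional embedding, use the monotonicity of $V$ away from the origin, and then deduce (ii) from (i) and the coercivity bound $(\omega-\omega_1)\|u\|_{L^2}^2\le\|u\|_{X_\omega}^2$ exactly as in the paper. The only (cosmetic) difference is that you keep the interval $(-1,1)$ fixed and tune the constant $\e$ in the interpolation inequality, whereas the paper fixes the Sobolev constant and shrinks the interval $[-\delta,\delta]$ so that $C\int_{|x|\le\delta}|V|\,dx<\tfrac12$.
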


\begin{proof}
Let $u\in X$. By the Sobolev inequality, we have
\begin{equation} \label{eqA-1}
    \int_{|x| \le \delta} V|u|^{2} \, dx 
    \le \|u\|_{L^{\infty}}^{2} 
    \int_{|x| \le \delta} |V(x)| \, dx
    \le C \|u\|_{H^{1}}^{2} 
    \int_{|x| \le \delta} |V(x)| \, dx,  
\end{equation}
where the constant $C > 0$ does 
not depend on $u$. Since $V \in L_{\text{loc}}^{1}(\R)$, 
there exists $\delta > 0$ such 
that 
\[
   C \int_{|x| \le \delta} |V(x)| \, dx < \frac12.
\] 
Thus, by \eqref{eqA-1}, we obtain 
\begin{equation} \label{eqA-2}
    \begin{split}
    \int_{\R} V|u|^{2} \, dx 
    &= \int_{|x| \le \delta} V|u|^{2} \, dx 
      + \int_{|x| \geq \delta} V|u|^{2} \, dx 
\\  &\ge - \frac12 \|u\|_{H^{1}}^2
      + V(\delta)\int_{|x| \ge \delta} |u|^{2} \, dx 
\\  &\ge - \frac12 \|u_x\|_{L^2}^2
      - C \|u\|_{L^2}^2.  
    \end{split}
\end{equation}
In particular, (i) holds.

Moreover, by the definition of $\omega_1$, for $\omega>\omega_1$, we have 
\begin{equation}\label{eq:A-3}
    (\omega-\omega_1)\|u\|_{L^2}^2
    \le \|u\|_{X_{\omega}}^2.
\end{equation}
By \eqref{eqA-2},
\begin{align*}
    \|u_x\|_{L^2}^2
   &= \|u\|_{X_{\omega}}^2
    - \int_{\R} V|u|^2 \, dx
    - \omega \|u\|_{L^2}^2
\\ &\le \|u\|_{X_{\omega}}^2
    + \tfrac12 \|u_x\|_{L^2}^2
    + C \|u\|_{L^2}^2.
\end{align*}
This and \eqref{eq:A-3} imply
\[  \|u_x\|_{L^2}^2
    \le C\bigl(\|u\|_{X_{\omega}}^2
    + \|u\|_{L^2}^2\bigr)
    \le C \|u\|_{X_{\omega}}^2. \]
Therefore, combining this with \eqref{eq:A-3}, we obtain $\|u\|_{H^1}\le C\|u\|_{X_{\omega}}$. This means (ii). 
\end{proof}

\section{Morse Index of $S_{\omega, \delta}^{\prime \prime}(\phi_{\omega})$}
\label{sec-Morse}
In this section, we show that the Morse index of 
$S_{\omega, \delta}''(\phi_{\omega})$ equals to one. 
First, we consider the following minimization problem: 
	\[
	d(\omega, \delta):= \inf\left\{ 
	S_{\omega, \delta}(u) \colon u \in X \setminus \{0\}, \: 
	K_{\omega, \delta}(u) = 0
	\right\},  
	\]	
where 
    \[
	K_{\omega, \delta}(u) := \langle S_{\omega, \delta}^{\prime}(u), u \rangle 
	= 2\int_{0}^{\infty}
    \left((|u^{\prime}|^{2} + g_{\delta}(r) |u|^{2}) 
    - h_{\delta} (r) |u|^{p + 1}\right) dr 
    \qquad 
    \mbox{for $u \in X$}.
	\]
By a similar argument of Section \ref{sec-ex}, 
we can find that there exists a minimizer, which is 
positive and even solution to \eqref{eq-nod6}. 
We can easily verify that the positive 
function $\phi_{\omega}$ satisfies \eqref{eq-nod6}. 
Since positive solution to \eqref{eq-nod6} is unique 
(see Section \ref{sec-nond}), we see that 
$\phi_{\omega}$ is also a minimizer of $d(\omega, \delta)$. 
Using this, we shall show the following: 
\begin{lemma}\label{lem-morse1}
If $u \in H^{1}_{\text{rad}}(\R)$ satisfies 
$\langle K_{\omega, \delta}(\phi_{\omega}), u \rangle = 0$, we have $\langle S_{\omega, \delta}^{\prime \prime} 
(\phi_{\omega}) u, u \rangle \geq 0$. 
\end{lemma}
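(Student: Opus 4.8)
The plan is to exploit that $\phi_{\omega}$ is a minimizer of $S_{\omega,\delta}$ over the Nehari-type set $\mathcal{N}_{\omega,\delta}:=\{v\in X\setminus\{0\}\colon K_{\omega,\delta}(v)=0\}$, as noted above, and to run the classical constrained second-variation argument along a curve in $\mathcal{N}_{\omega,\delta}$ passing through $\phi_{\omega}$ in the direction $u$. The only structural input needed is the nondegeneracy
\[  \langle K_{\omega,\delta}'(\phi_{\omega}),\phi_{\omega}\rangle
    =-2(p-1)\int_0^\infty h_{\delta}(r)\phi_{\omega}^{p+1}(r)\,dr<0, \]
which follows at once from $K_{\omega,\delta}(\phi_{\omega})=0$; recall also that $\phi_{\omega}$ solves \eqref{eq-nod6}, i.e.\ $S_{\omega,\delta}'(\phi_{\omega})=0$.

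First I would, for $|t|$ small, define $\lambda(t)>0$ to be the unique scalar for which $\lambda(t)(\phi_{\omega}+tu)\in\mathcal{N}_{\omega,\delta}$, namely
\[  \lambda(t)^{p-1}
    =\frac{\int_0^\infty\bigl(|(\phi_{\omega}+tu)'|^2+g_{\delta}(r)|\phi_{\omega}+tu|^2\bigr)\,dr}{\int_0^\infty h_{\delta}(r)|\phi_{\omega}+tu|^{p+1}\,dr}. \]
At $t=0$ both numerator and denominator equal $\int_0^\infty h_{\delta}\phi_{\omega}^{p+1}\,dr>0$ (the numerator because $K_{\omega,\delta}(\phi_{\omega})=0$), so by continuity $t\mapsto\lambda(t)$ is well defined and of class $C^{1}$ near $t=0$ with $\lambda(0)=1$. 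Differentiating the identity $K_{\omega,\delta}\bigl(\lambda(t)(\phi_{\omega}+tu)\bigr)\equiv 0$ at $t=0$ gives $\lambda'(0)\,\langle K_{\omega,\delta}'(\phi_{\omega}),\phi_{\omega}\rangle+\langle K_{\omega,\delta}'(\phi_{\omega}),u\rangle=0$; the second term vanishes by hypothesis and the coefficient of $\lambda'(0)$ is nonzero by the displayed inequality, so $\lambda'(0)=0$.

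Finally, set $\psi(t):=\lambda(t)(\phi_{\omega}+tu)$ and $g(t):=S_{\omega,\delta}(\psi(t))$. Since $\psi(t)\in\mathcal{N}_{\omega,\delta}$ and $g(0)=S_{\omega,\delta}(\phi_{\omega})=d(\omega,\delta)=\min_{\mathcal{N}_{\omega,\delta}}S_{\omega,\delta}$, the function $g$ has a local minimum at $t=0$, hence $g''(0)\ge 0$. Using $S_{\omega,\delta}\in C^2(X,\R)$, $S_{\omega,\delta}'(\phi_{\omega})=0$, and $\psi'(0)=\lambda'(0)\phi_{\omega}+\lambda(0)u=u$, one computes
\[  g''(0)=\langle S_{\omega,\delta}''(\phi_{\omega})\psi'(0),\psi'(0)\rangle+\langle S_{\omega,\delta}'(\phi_{\omega}),\psi''(0)\rangle=\langle S_{\omega,\delta}''(\phi_{\omega})u,u\rangle, \]
so $\langle S_{\omega,\delta}''(\phi_{\omega})u,u\rangle\ge 0$, which is the claim.

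I do not anticipate a genuine obstacle here: $S_{\omega,\delta}\in C^2$ is already granted, and the whole argument is bookkeeping around the scaling map $\lambda(t)$. The only point needing minor care is that the curve must remain in $X$ (not merely in $H^1$) so that all the quadratic forms are finite; this holds since $\phi_{\omega},u\in X_{\mathrm{rad}}$ and $X$ is a linear space, and if instead one allows $u\in H^1_{\mathrm{rad}}\setminus X_{\mathrm{rad}}$ then $\langle S_{\omega,\delta}''(\phi_{\omega})u,u\rangle=+\infty$ and the inequality is trivial.
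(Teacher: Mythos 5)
Your proof is correct and follows essentially the same route as the paper's: both construct a curve in the Nehari set $\{K_{\omega,\delta}=0\}$ through $\phi_{\omega}$ whose tangent at $\phi_{\omega}$ is $u$ (the correction in the $\phi_{\omega}$-direction vanishing to first order precisely because $\langle K_{\omega,\delta}'(\phi_{\omega}),u\rangle=0$), and then invoke second-order minimality of $\phi_{\omega}$ on that set. The only cosmetic differences are that you use the explicit scaling projection $\lambda(t)(\phi_{\omega}+tu)$ and argue directly, whereas the paper uses the implicit function theorem on the additive family $\phi_{\omega}+au+b\phi_{\omega}$ and argues by contradiction; your remark on the case $u\in H^1_{\mathrm{rad}}\setminus X_{\mathrm{rad}}$ is a reasonable extra precaution.
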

\begin{proof}
We shall show by contradiction. 
Suppose to the contrary that 
there exists $u_{*} \in H^{1}_{\text{rad}}(\R)$ such that 
$\langle K_{\omega, \delta}(\phi_{\omega}), u_{*} \rangle = 0$ and 
$\langle S_{\omega, \delta}^{\prime \prime} 
(\phi_{\omega}) u_{*}, u_{*} \rangle = -1$.
Define the function
$Z: \R^{2} \to H^{1}_{\text{rad}}(\R)$ by 
    \[
    Z(a, b) = \phi_{\omega} + a u_{*} + b \phi_{\omega}. 
    \]
We can easily verify that 
$K_{\omega, \delta}(Z(0, 0)) = K_{\omega, \delta}(\phi_{\omega}) = 0$. 
In addition, we have 
    \[
    \frac{\p}{\p b} K_{\omega, \delta}(Z(a, b))\biggl|_{(a, b) = (0, 0)}
    = \langle S_{\omega, \delta}^{\prime \prime}(\phi_{\omega}) \phi_{\omega}, 
    \phi_{\omega} \rangle 
    = - 2(p-1) \int_{0}^{\infty} h_{\delta}(r) |\phi_{\omega}|^{p+1} dx < 0. 
    \]
Hence, it follows from the implicit function theorem that 
there exists $a_{0} > 0$ and $h \in C^{2}(-a_{0}, a_{0})$ such that 
$K_{\omega, \delta}(Z(a, h(a))) = 0$ and $h(0) = 0$. 
Since $\phi_{\omega}$ is a minimizer of $d(\omega, \delta)$, we have 
$\frac{d^{2} S_{\omega, \delta}}{d a^{2}}(Z(a, b))\Bigl|_{a = 0} \geq 0$.     

On the other hand, 
differentiating $K_{\omega, \delta}(Z(a, h(a))) = 0$ and substituting $a = 0$, we obtain 
    \[
    \langle K_{\omega, \delta}^{\prime} 
(\phi_{\omega}), u_{*} \rangle + \frac{d h}{ d a}(0) 
\langle K_{\omega, \delta}^{\prime} 
(\phi_{\omega}), \phi_{\omega} \rangle = 0
    \]
Since $\langle K_{\omega, \delta}^{\prime} 
(\phi_{\omega}), \phi_{\omega} \rangle = - 2 (p-1) \int_{0}^{\infty} h_{\delta}(r) |\phi_{\omega}|^{p+1} dx < 0$, we see that $\frac{d h}{ d a}(0) = 0$. 
It follows that 
    \[
    \begin{split}
    \frac{d^{2} S_{\omega, \delta}}{d a^{2}}(Z(a, h(a)))
    & 
    = \frac{d}{d a} \left( 
    \langle S_{\omega, \delta}^{\prime}(Z(a, b)), 
    \phi_{\omega} + \frac{d h}{d a}(a) \phi_{\omega} \rangle
    \right) \\
    & = 
    \langle S_{\omega, \delta}^{\prime \prime}(Z(a, b))
    \left(\phi_{\omega} + \frac{d h}{d a}(a) \phi_{\omega} \right), 
    \phi_{\omega} + \frac{d h}{d a}(a) \phi_{\omega} \rangle \\
    & \quad 
    + \langle S_{\omega, \delta}^{\prime}(Z(a, b)), 
    \frac{d^{2} h}{d a^{2}}(a) \phi_{\omega} \rangle. 
    \end{split}
    \]
This together with $\frac{d h}{ d a}(0) = 0$
implies that 
    \[
    \frac{d^{2} S_{\omega, \delta}}{d a^{2}}(Z(a, h(a)))\Big|_{a = 0}
    = \langle S_{\omega, \delta}^{\prime \prime}(\phi_{\omega})
    \phi_{\omega}, \phi_{\omega} \rangle 
    = - 2(p-1) \int_{0}^{\infty} h_{\delta}(r) |\phi_{\omega}|^{p+1} dx < 0,  
    \]
which is a contradiction. 
This completes the proof. 
\end{proof}
From Lemma \ref{lem-morse1} and the mini-max theorem, 
we see that the Morse index of $S_{\omega, \delta}^{\prime \prime}(\phi_{\omega})$ 
is $1$.

\begin{thank}
N.F. was supported by JSPS KAKENHI Grant Number JP20K14349 and JP24H00024.
M.I was supported by JSPS KAKENHI Grant Number JP25H01453 and JP23K03174. H.K. was supported by JSPS KAKENHI 
Grant Number JP25H01453 and JP25K07089.
\end{thank}

 \subsection*{Data availability}
Data sharing not applicable to this article as no datasets were generated or analyzed during the current study.

\subsection*{Conflict of interest}
The authors declare that they have no conflict of interest.

\bibliographystyle{amsplain_abbrev_nobysame_nonumber} %
\bibliography{biblio}

\providecommand{\bysame}{\leavevmode\hbox to3em{\hrulefill}\thinspace}
\providecommand{\MR}{\relax\ifhmode\unskip\space\fi MR }
\providecommand{\MRhref}[2]{%
  \href{http://www.ams.org/mathscinet-getitem?mr=#1}{#2}
}
\providecommand{\href}[2]{#2}
\begin{thebibliography}{10}

\bibitem{MR3638314}
J.~Bellazzini, N.~Boussa\"id, L.~Jeanjean, and N.~Visciglia, \emph{Existence
  and stability of standing waves for supercritical {NLS} with a partial
  confinement}, Comm. Math. Phys. \textbf{353} (2017), 229--251.

\bibitem{MR646873}
H.~Berestycki and T.~Cazenave, \emph{Instabilit\'e{} des \'etats stationnaires
  dans les \'equations de {S}chr\"odinger et de {K}lein-{G}ordon non
  lin\'eaires}, C. R. Acad. Sci. Paris S\'er. I Math. \textbf{293} (1981),
  489--492.

\bibitem{BurcFero15}
A.~Burchard and A.~Ferone, \emph{On the extremals of the {P}\'olya-{S}zeg{\H o}
  inequality}, Indiana Univ. Math. J. \textbf{64} (2015), 1447--1463.

\bibitem{Carr_2000}
L.~D. Carr, C.~W. Clark, and W.~P. Reinhardt, \emph{Stationary solutions of the
  one-dimensional nonlinear schr^^c3^^b6dinger equation. i. case of repulsive
  nonlinearity}, Physical Review A \textbf{62} (2000).

\bibitem{MR677997}
T.~Cazenave and P.-L. Lions, \emph{Orbital stability of standing waves for some
  nonlinear {S}chr\"odinger equations}, Comm. Math. Phys. \textbf{85} (1982),
  549--561.

\bibitem{MR2002047}
T.~Cazenave, \emph{Semilinear {S}chr\"odinger equations}, Courant Lecture Notes
  in Mathematics, vol.~10, New York University, Courant Institute of
  Mathematical Sciences, New York; American Mathematical Society, Providence,
  RI, 2003.

\bibitem{MR1995870}
A.~Comech and D.~Pelinovsky, \emph{Purely nonlinear instability of standing
  waves with minimal energy}, Comm. Pure Appl. Math. \textbf{56} (2003),
  1565--1607.

\bibitem{MR4191499}
N.~Fukaya, \emph{Uniqueness and nondegeneracy of ground states for nonlinear
  {S}chr\"odinger equations with attractive inverse-power potential}, Commun.
  Pure Appl. Anal. \textbf{20} (2021), 121--143.

\bibitem{MR4020633}
N.~Fukaya and M.~Ohta, \emph{Strong instability of standing waves for nonlinear
  {S}chr\"odinger equations with attractive inverse power potential}, Osaka J.
  Math. \textbf{56} (2019), 713--726.

\bibitem{MR2123132}
R.~Fukuizumi, \emph{Stability of standing waves for nonlinear {S}chr\"odinger
  equations with critical power nonlinearity and potentials}, Adv. Differential
  Equations \textbf{10} (2005), 259--276.

\bibitem{MR1973275}
R.~Fukuizumi and M.~Ohta, \emph{Instability of standing waves for nonlinear
  {S}chr\"odinger equations with potentials}, Differential Integral Equations
  \textbf{16} (2003), 691--706.

\bibitem{MR1948875}
R.~Fukuizumi and M.~Ohta, \emph{Stability of standing waves for nonlinear
  {S}chr\"odinger equations with potentials}, Differential Integral Equations
  \textbf{16} (2003), 111--128.

\bibitem{MR2379460}
F.~Genoud and C.~A. Stuart, \emph{Schr\"odinger equations with a spatially
  decaying nonlinearity: existence and stability of standing waves}, Discrete
  Contin. Dyn. Syst. \textbf{21} (2008), 137--186.

\bibitem{MR901236}
M.~Grillakis, J.~Shatah, and W.~Strauss, \emph{Stability theory of solitary
  waves in the presence of symmetry. {I}}, J. Funct. Anal. \textbf{74} (1987),
  160--197.

\bibitem{LiebLoss2001}
E.~H. Lieb and M.~Loss, \emph{Analysis}, vol.~14, American Mathematical Soc.,
  2001.

\bibitem{MR2923422}
M.~Maeda, \emph{Stability of bound states of {H}amiltonian {PDE}s in the
  degenerate cases}, J. Funct. Anal. \textbf{263} (2012), 511--528.

\bibitem{MR1989539}
J.~B. McLeod, C.~A. Stuart, and W.~C. Troy, \emph{Stability of standing waves
  for some nonlinear {S}chr\"odinger equations}, Differential Integral
  Equations \textbf{16} (2003), 1025--1038.

\bibitem{MR3318740}
B.~Noris, H.~Tavares, and G.~Verzini, \emph{Existence and orbital stability of
  the ground states with prescribed mass for the {$L^2$}-critical and
  supercritical {NLS} on bounded domains}, Anal. PDE \textbf{7} (2014),
  1807--1838.

\bibitem{MR1016082}
Y.-G. Oh, \emph{Cauchy problem and {E}hrenfest's law of nonlinear
  {S}chr\"odinger equations with potentials}, J. Differential Equations
  \textbf{81} (1989), 255--274.

\bibitem{MR2785894}
M.~Ohta, \emph{Instability of bound states for abstract nonlinear
  {S}chr\"odinger equations}, J. Funct. Anal. \textbf{261} (2011), 90--110.

\bibitem{MR3842879}
M.~Ohta, \emph{Strong instability of standing waves for nonlinear
  {S}chr\"odinger equations with a partial confinement}, Commun. Pure Appl.
  Anal. \textbf{17} (2018), 1671--1680.

\bibitem{MR3792709}
M.~Ohta, \emph{Strong instability of standing waves for nonlinear
  {S}chr\"odinger equations with harmonic potential}, Funkcial. Ekvac.
  \textbf{61} (2018), 135--143.

\bibitem{MR493421}
M.~Reed and B.~Simon, \emph{Methods of modern mathematical physics. {IV}.
  {A}nalysis of operators}, Academic Press [Harcourt Brace Jovanovich,
  Publishers], New York-London, 1978.

\bibitem{MR939275}
H.~A. Rose and M.~I. Weinstein, \emph{On the bound states of the nonlinear
  {S}chr\"odinger equation with a linear potential}, Phys. D \textbf{30}
  (1988), 207--218.

\bibitem{MR804458}
J.~Shatah and W.~Strauss, \emph{Instability of nonlinear bound states}, Comm.
  Math. Phys. \textbf{100} (1985), 173--190.

\bibitem{ShioWata13}
N.~Shioji and K.~Watanabe, \emph{A generalized {P}oho\v zaev identity and
  uniqueness of positive radial solutions of {$\Delta u+g(r)u+h(r)u^p=0$}}, J.
  Differential Equations \textbf{255} (2013), 4448--4475.

\bibitem{ShioWata16}
N.~Shioji and K.~Watanabe, \emph{Uniqueness and nondegeneracy of positive
  radial solutions of {${\rm div}(\rho\nabla u)+\rho(-gu+hu^p)=0$}}, Calc. Var.
  Partial Differential Equations \textbf{55} (2016), Art. 32, 42.

\bibitem{MR2239285}
C.~A. Stuart, \emph{Uniqueness and stability of ground states for some
  nonlinear {S}chr\"odinger equations}, J. Eur. Math. Soc. (JEMS) \textbf{8}
  (2006), 399--414.

\bibitem{MR691044}
M.~I. Weinstein, \emph{Nonlinear {S}chr\"odinger equations and sharp
  interpolation estimates}, Comm. Math. Phys. \textbf{87} (1982/83), 567--576.

\end{thebibliography}

\vspace{0.5cm}

\noindent
Noriyoshi Fukaya
\\[6pt]
Waseda Research Institute for Science and Engineering, 
\\[6pt]
Waseda University,
\\[6pt]
Tokyo 169-8555, Japan
\\[6pt]
E-mail:nfukaya@aoni.waseda.jp

\vspace{0.5cm}

\noindent
Masahiro Ikeda
\\[6pt]
Graduate School of Information Science and Technology, 
\\[6pt]
Osaka University, 
\\[6pt]
1-5, Yamadaoka, Suita-shi, Osaka 565-0871, 
Japan
\\[6pt]
E-mail: ikeda@ist.osaka-u.ac.jp

\vspace{0.5cm}

\noindent
Hiroaki Kikuchi
\\[6pt]
Department of Mathematics,
\\[6pt]
Tsuda University,
\\[6pt]
2-1-1 Tsuda-machi, Kodaira-shi, Tokyo 187-8577, JAPAN
\\[6pt]
E-mail: hiroaki@tsuda.ac.jp

\end{document}